\numberwithin{equation}{section}
\newtheorem{cor}[equation]{Corollary}
\newtheorem{lem}[equation]{Lemma}
\newtheorem{prop}[equation]{Proposition}
\newtheorem{thm}[equation]{Theorem}
\theoremstyle{definition}
\newtheorem{exa}[equation]{Example}
\newtheorem{rem}[equation]{Remark}
\newtheorem{rems}[equation]{Remarks}
\def\R{\mathbb R}
\def\Z{\mathbb Z}
\def\ve{\varepsilon}
\def\vf{\varphi}
\def\la{\langle}
\def\ra{\rangle}
\newcommand{\cs}{\operatorname{cs}}
\newcommand{\ct}{\operatorname{ct}}
\newcommand{\diam}{\operatorname{diam}}
\newcommand{\ess}{\operatorname{ess}}
\newcommand{\Or}{\operatorname{Or}}
\newcommand{\sn}{\operatorname{sn}}
\newcommand{\supp}{\operatorname{supp}}
\newcommand{\sys}{\operatorname{sys}}
\newcommand{\tn}{\operatorname{tn}}
\newcommand{\vol}{\operatorname{vol}}
\DeclareMathOperator{\arsinh}{arsinh}
\begin{document}


\title[On the analytic systole of Riemannian surfaces]
{On the analytic systole of Riemannian surfaces of finite type}

\author{Werner Ballmann}
\address
{WB: Max Planck Institute for Mathematics,
Vivatsgasse 7, 53111 Bonn
and Hausdorff Center for Mathematics,
Endenicher Allee 60, 53115 Bonn.
}
\email{hwbllmnn\@@mpim-bonn.mpg.de}
\author{Henrik Matthiesen}
\address
{HM: Max Planck Institute for Mathematics,
Vivatsgasse 7, 53111 Bonn.}
\email{hematt\@@mpim-bonn.mpg.de}
\author{Sugata Mondal}
\address
{SM: Indiana University,
Rawles Hall, 831 E 3rd Street,
Bloomington, Indiana
and Max Planck Institute for Mathematics,
Vivatsgasse 7, 53111 Bonn}
\email{sumondal\@@iu.edu}

\thanks{\emph{Acknowledgments.}
We would like to thank the Max Planck Institute for Mathematics in Bonn,
the Hausdorff Center for Mathematics in Bonn,
and Indiana University at Bloomington
for their support and hospitality.
We would also like to thank the referees whose criticism led to a better presentation in the paper.}

\date{\today}

\subjclass{58J50, 35P15, 53C99}
\keywords{Analytic systole, Laplace operator, small eigenvalues}

\begin{abstract}
In \cite{BMM} we introduced, for a Riemannian surface $S$,
the quantity $ \Lambda(S):=\inf_F\lambda_0(F)$,
where $\lambda_0(F)$ denotes the first Dirichlet eigenvalue
of $F$ and the infimum is taken over all compact subsurfaces $F$
of $S$ with smooth boundary and abelian fundamental group.
A result of Brooks \cite{Br2} implies $\Lambda(S)\ge\lambda_0(\tilde{S})$,
the bottom of the spectrum of the universal cover $\tilde{S}$.
In this paper, we discuss the strictness of the inequality.
Moreover, in the case of curvature bounds, we relate $\Lambda(S)$ with the systole,
improving the main result of \cite{Mo1}.
\end{abstract}

\maketitle


\section{Introduction}

Small eigenvalues of Riemannian surfaces, in particular of hyperbolic surfaces,
have been of interest in different mathematical fields for a long time.
Buser and Schmutz conjectured that a hyperbolic metric on the closed surface $S=S_g$
of genus $g\ge2$ has at most $2g-2$ eigenvalues below $1/4$ \cite{Bu1,S2}.
In \cite{OR},
Otal and Rosas proved a generalized version of this conjecture.
They showed that a real analytic Riemannian metric on $S_g$ with negative curvature
has at most $2g-2$ eigenvalues $\le\lambda_0(\tilde S)$,
where $\tilde S$ denotes the universal covering surface of $S$,
endowed with the lifted Riemannian metric,
and where $\lambda_0(\tilde S)$ denotes the bottom of the spectrum of $\tilde S$.
Recall here that,
for a Riemannian surface $F$ (possibly not complete)
with piecewise smooth boundary $\partial F$ (possibly empty),
the \emph{bottom of the spectrum of $F$} is defined to be
\begin{equation}
  \lambda_0 = \lambda_0(F) = \inf R(\varphi),
\end{equation}
where $\varphi$ runs over all non-vanishing smooth functions on $F$
with compact support in the interior $\mathring F=F\setminus\partial F$ of $F$
and where $R(\varphi)$ denotes the Rayleigh quotient of $\varphi$.
It is well known that the bottom of the spectrum
of the Euclidean and hyperbolic plane is $0$ and $1/4$, respectively.
When $F$ is closed, $\lambda_0(F)=0$,
when $F$ is compact and connected with non-empty boundary,
$\lambda_0(F)$ is the first Dirichlet eigenvalue of $F$.
In the latter case, $\lambda_0(F)$-eigenfunctions of $F$ do not have zeros in $\mathring F$
and, therefore, the multiplicity of $\lambda_0(F)$ as an eigenvalue of $F$ is one.
We then call the corresponding positive eigenfunction of $F$
with $L^2$-norm one the \emph{ground state} of $F$.

For a Riemannian surface $S$, with or without boundary,
we define the \emph{analytic systole} to be the quantity
\begin{equation}\label{anasy}
 \Lambda(S) = \inf_F\lambda_0(F),
\end{equation}
where the infimum is taken over all subsurfaces $F$ in $\mathring S$ with smooth boundary
which are diffeomorphic to a closed disc, annulus, or cross cap.
(A cross cap is frequently also called a M\"obius strip.)
Note that the fundamental groups of disc, annulus, and cross cap are cyclic,
hence amenable.
By the work of Brooks, we therefore have
\begin{equation}\label{brooks}
  \Lambda(S)\ge\lambda_0(\tilde{S})
\end{equation}
for all complete and connected Riemannian surfaces $S$,
see \cite[Theorem 1]{Br2} and also \cref{brooks2} below. 
The strictness of this inequalilty and other estimates of $\Lambda(S)$ are the topics of this article.

To clarify our terminology, a surface is a smooth manifold of dimension two.
A Riemann surface is a surface together with a conformal structure.
They are not the topic of this article.
We study Riemannian surfaces, that is, surfaces together with a Riemannian metric.

We say that a surface $S$ is of \emph{finite type} if its Euler characteristic $\chi(S)$
is finite and its boundary is compact (possibly empty).
It is well known that a connected surface $S$ is of finite type if and only if $S$
can be obtained from a closed surface by deleting a finite number of pairwise disjoint
points and open discs.

After first extensions of the results of Otal and Rosas in \cite{Ma} and \cite{Mo1},
we showed in \cite{BMM} and \cite{BMM1} that any complete Riemannian metric
on a connected surface $S$ of finite type with $\chi(S)<0$
has at most $-\chi(S)$ eigenvalues $\le\Lambda(S)$,
where the eigenvalues are understood to be Dirichlet eigenvalues if $\partial S\ne\emptyset$.
This result explains the significance of the analytic systole
and the interest in establishing strictness in \eqref{brooks}.

\subsection{Statement of main results}
In our first three results, we discuss the strictness of \eqref{brooks}.

\begin{thm}\label{main1}
If $S$ is a compact and connected Riemannian surface
whose fundamental group is not cyclic,
then $\Lambda(S)>\lambda_0(\tilde S)$.
\end{thm}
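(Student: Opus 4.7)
The plan is to argue by contradiction. Suppose $\Lambda(S)=\lambda:=\lambda_0(\tilde S)$ and choose subsurfaces $F_n\subset\mathring S$, each diffeomorphic to a disc, annulus, or cross cap, with $\lambda_0(F_n)\to\lambda$. After passing to a subsequence we may assume all $F_n$ are of the same topological type. Let $H_n\subset\pi_1(S)$ denote the image of $\pi_1(F_n)\to\pi_1(S)$; this is cyclic and hence amenable. Let $p_n\colon S_n\to S$ be the Riemannian covering corresponding to $H_n$. Applying Brooks's theorem (in the form to be recalled in \cref{brooks2}) to the amenable covering $\tilde S\to S_n$ gives $\lambda_0(S_n)=\lambda$, and choosing a connected component $\tilde F_n$ of $p_n^{-1}(F_n)$ one also has $\lambda_0(\tilde F_n)=\lambda_0(F_n)$. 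The non-cyclicity of $\pi_1(S)$ enters here to force $H_n\subsetneq\pi_1(S)$, so that $S_n$ is non-compact and $\tilde F_n$ is a proper subset of $S_n$.

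The heart of the proof is a uniform spectral gap
\begin{equation*}
  \lambda_0(\tilde F_n)-\lambda \ \geq\  c \ > \ 0
\end{equation*}
with $c=c(S)$ independent of $n$, which contradicts $\lambda_0(F_n)\to\lambda$. To quantify the gap I would use Agmon's ground-state representation. Since $\lambda=\lambda_0(S_n)$, the theorem of Allegretto and Piepenbrink produces a positive smooth $u_n$ on $S_n$ with $\Delta u_n=\lambda u_n$. Writing a Dirichlet test function on $\tilde F_n$ as $\varphi=u_n\psi$, integration by parts yields
\begin{equation*}
  \int_{\tilde F_n}|d\varphi|^2 - \lambda\int_{\tilde F_n}\varphi^2 \ =\ \int_{\tilde F_n} u_n^2\,|d\psi|^2,
\end{equation*}
so that $\lambda_0(\tilde F_n)-\lambda$ equals the bottom of the $u_n^2$-weighted Dirichlet spectrum on $\tilde F_n$. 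For each fixed $n$ this is strictly positive by a weighted Poincar\'e inequality on $\tilde F_n$.

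The uniformity in $n$ is the core difficulty and the main obstacle of the proof. The geometric input is that, since $S$ is compact with $D:=\operatorname{diam}(S)<\infty$, $\tilde F_n$ has diameter at most $D$ in the disc case and transverse width at most $D$ in the annulus and cross cap cases. I would combine this with the cocompactness of the $\pi_1(S)$-action on $\tilde S$: after applying a suitable deck transformation one may assume that the lift of $\tilde F_n$ in $\tilde S$ (or the axis of a generator of $H_n$) meets a fixed compact fundamental domain, producing a precompact family of configurations $(\tilde F_n,u_n)$ up to isometries of $\tilde S$. A subsequential smooth limit then produces a compact domain with strictly positive bottom of weighted Dirichlet spectrum, yielding the uniform constant $c$. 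The delicate point is that the covers $S_n$ change with $n$ and carry essentially no intrinsic symmetry, so the uniformity must be extracted from the local geometry of $\tilde S$ via cocompactness of $\pi_1(S)$, with a careful normalisation of $u_n$ (for instance by prescribing its value at a fixed basepoint) required to ensure genuine convergence in the limit.
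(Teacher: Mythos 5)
Your general framing overlaps with the paper's: passing to the cyclic cover $S_n$ determined by the core curve of $F_n$, using Brooks-type invariance $\lambda_0(S_n)=\lambda_0(\tilde S)$, and comparing $\lambda_0(\tilde F_n)$ with $\lambda_0(S_n)$ is exactly the setting the paper uses for annuli and cross caps in \cref{sece}, and Agmon's ground-state substitution is a sensible device for quantifying the gap $\lambda_0(\tilde F_n)-\lambda$. The point where the argument breaks down is the precompactness step that is supposed to deliver a uniform constant $c>0$. You assert that $\tilde F_n$ has diameter at most $D=\operatorname{diam}(S)$ in the disc case, and bounded transverse width otherwise. This is false: $p_n\colon\tilde F_n\to F_n$ is an isometry, so $\tilde F_n$ inherits the \emph{intrinsic} diameter of $F_n$, which is in no way controlled by $\operatorname{diam}(S)$. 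A disc or annulus can wind around $S$ arbitrarily often, producing a long thin region of enormous intrinsic diameter; its lift to $\tilde S$ is correspondingly large. Consequently the configurations $(\tilde F_n,u_n)$ are not precompact modulo $\operatorname{Isom}(\tilde S)$, and no subsequential smooth limit of domains exists. (There is also no canonical normalization or uniqueness for the Allegretto--Piepenbrink solution $u_n$ on the non-compact $S_n$, which is a second obstacle to a convergence argument.)

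The paper sidesteps this exactly because the domains cannot be controlled: it controls the eigenfunctions instead. Lemma~\ref{cheer} gives an inradius bound for superlevel sets of the ground state (a priori, not after any limit), which pins down a fixed ball $B(x_0,\rho)$ on which $\vf_n\ge\ve$. The lifted, zero-extended $\tilde\vf_n$ are then bounded in $H^1_0$, converge weakly to a genuine $\lambda_0(\tilde S)$-eigenfunction which is nonzero on $B(\tilde x_0,\rho)$ but vanishes on other preimages, contradicting unique continuation. For annuli the cover $\hat S$ a priori varies with $n$; Lemma~\ref{lambdad} is used to bound the boundary lengths of the good component $A_n'$, hence to reduce to finitely many isotopy types, so that the cover can be taken fixed. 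Your proposal would need substitutes for both: a mechanism that prevents the eigenfunctions from escaping to infinity in $S_n$ without invoking compactness of $\tilde F_n$, and a reason the sequence of covers $S_n$ (or a subsequence) can be taken constant. As written, neither is supplied, and the diameter claim on which the compactness hinges is incorrect.
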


Note that the compact and connected surfaces with cyclic fundamental group
are precisely sphere, projective plane, closed disc, closed annulus, and closed cross cap.
For these, we always have equality in \eqref{brooks} as we will see in \cref{proqua}.

Recall that the spectrum of $S$ is discrete if $S$ is compact.
In general,
the spectrum of $S$ is the disjoint union of its discrete and essential parts,
where $\lambda\in\R$ belongs to the essential spectrum of $S$
if $\Delta-\lambda$ is not a Fredholm operator.
The \emph{bottom of the essential spectrum} is given by
\begin{equation}\label{boess}
  \lambda_{\ess}(S) = \lim_K\lambda_0(S\setminus K),
\end{equation}
where $K$ runs over the compact subsets of $S$, ordered by inclusion, see \cref{esp}.
By domain monotonicity,
the limit is monotone with respect to the ordering of the compact subsets of $S$.
If $S$ is compact, then $\lambda_{\ess}(S)=\infty$.
If $S$ is a complete and connected Riemannian surface of finite type, then
\begin{equation}\label{boess2}
  \lambda_{\ess}(S) \ge \Lambda(S)
\end{equation}
since the the ends of $S$ admit neighborhoods in $S$
whose connected components are diffeomorphic to open annuli.
Such neighborhoods of the ends of $S$ will be called \emph{cylindrical}.

\cref{main1} extends in the following way to surfaces of finite type,
be they compact or non-compact.

\begin{thm} \label{main3}
If $S$ is a complete and connected Riemannian surface of finite type
whose fundamental group is not cyclic,
then $\Lambda(S)>\lambda_0(\tilde S)$
if and only if $\lambda_{\ess}(S)>\lambda_0(\tilde S)$.
\end{thm}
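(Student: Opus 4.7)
The implication $\Lambda(S) > \lambda_0(\tilde S) \Rightarrow \lambda_{\ess}(S) > \lambda_0(\tilde S)$ is immediate from \eqref{boess2}. For the converse I argue by contradiction: assume $\lambda_{\ess}(S) > \lambda_0(\tilde S)$ and $\Lambda(S) = \lambda_0(\tilde S)$, and choose subsurfaces $F_n \subset \mathring S$ diffeomorphic to a disc, annulus, or cross cap with $\lambda_0(F_n) \to \lambda_0(\tilde S)$; let $\phi_n$ be the positive $L^2$-normalized ground state of $F_n$, extended by zero to $S$.

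Using the finite-type hypothesis together with \eqref{boess}, I fix $\varepsilon > 0$ and a compact subsurface $S_0 \subset \mathring S$ with smooth boundary such that $\lambda_0(S \setminus S_0) > \lambda_0(\tilde S) + 2\varepsilon$, the complement $S \setminus S_0$ is a disjoint union of cylindrical end neighborhoods, and $S_0$ is a deformation retract of $S$. The inclusion $S_0 \hookrightarrow S$ is then a $\pi_1$-isomorphism; in particular $\pi_1(S_0)$ is non-cyclic, and each connected component of $\pi^{-1}(S_0)$ in $\tilde S$ is isometric to $\tilde{S_0}$. Domain monotonicity gives $\lambda_0(\tilde{S_0}) \ge \lambda_0(\tilde S)$, and \cref{main1} applied to $S_0$ therefore yields the strict bound $\Lambda(S_0) > \lambda_0(\tilde{S_0}) \ge \lambda_0(\tilde S)$.

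The remaining task is to construct, for $n$ large, a disc/annulus/cross cap subsurface $F_n' \subset \mathring{S_0}$ with $\lambda_0(F_n')$ arbitrarily close to $\lambda_0(\tilde S)$, contradicting the strict inequality above. I would do this analytically via IMS-type localization with a smooth cutoff $\chi$ equal to $1$ on $S_0$ and vanishing in the complement of a slight enlargement $S_0^+ \subset \mathring S$: the gap $\lambda_0(S \setminus S_0) > \lambda_0(F_n) + \varepsilon$ traps a uniform fraction of $\|\phi_n\|_{L^2}^2$ inside $S_0^+$, and the Rayleigh quotient of $\chi\phi_n$ is bounded by $\lambda_0(F_n) + o(1)$, with IMS errors absorbed by Agmon-type exponential decay of $\phi_n$ in the cylindrical ends. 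Passing to a connected component $C_n$ of $F_n \cap S_0^+$ carrying a definite fraction of the mass yields a connected planar (resp.\ annular or cross cap) subsurface of $\mathring{S_0^+}$ with small $\lambda_0$; a regular-value argument on $\chi\phi_n|_{C_n}$ produces a smoothly bounded subsurface, and filling in interior holes that lie in $\mathring{S_0}$ (which only decreases $\lambda_0$ by domain monotonicity) finally gives the desired disc, annulus, or cross cap in $\mathring{S_0}$.

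The main obstacle is this last analytic-topological surgery. Connected components of $F_n \cap S_0^+$ need not themselves be discs, annuli, or cross caps, and controlling their topology while preserving the smallness of the Rayleigh quotient requires careful interplay between the regular-value step, the planarity inherited from $F_n$, and the filling procedure near $\partial S_0$. The hypothesis $\lambda_{\ess}(S) > \lambda_0(\tilde S)$ is used essentially in two places: to set up $S_0$ with $\lambda_0(S\setminus S_0) > \lambda_0(\tilde S)$, and to force concentration of $\phi_n$ in the compact core so that the topological reduction has room to operate.
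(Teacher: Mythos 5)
Your proposal takes a genuinely different route from the paper's, but it has a gap that I do not see how to close, and it is worth being precise about where it sits.

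The main issue is quantitative. You argue by contradiction: assuming $\Lambda(S)=\lambda_0(\tilde S)$, you cut $\varphi_n$ off by a fixed $\chi$ supported in $S_0^+$ and produce disc/annulus/cross cap subsurfaces $F_n'\subset\mathring S_0^+$ of small first Dirichlet eigenvalue, to contradict $\Lambda(S_0^+)>\lambda_0(\tilde S)$ from \cref{main1}. But your claim $R(\chi\varphi_n)\le\lambda_0(F_n)+o(1)$ for a \emph{fixed} cutoff does not hold. Integration by parts gives exactly
\[
  R(\chi\varphi_n)-\lambda_0(F_n)
  = \frac{\int \varphi_n^2|\nabla\chi|^2}{\int\chi^2\varphi_n^2},
\]
and the right-hand side is only bounded by a constant $\delta(\chi)$ depending on $\sup|\nabla\chi|^2$ and the gap $\lambda_0(S\setminus S_0)-\lambda_0(F_n)$ (cf.\ the mass estimate in \cref{comploc}); it does not tend to $0$ as $n\to\infty$. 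So what you actually obtain is $\Lambda(S_0^+)\le\lambda_0(\tilde S)+\delta(\chi)$, which is perfectly compatible with \cref{main1}'s strict inequality. To drive $\delta(\chi)$ to $0$ you must widen the cutoff region and hence enlarge $S_0^+$; but under the contradiction hypothesis, $\Lambda(S_0^{(m)})\downarrow\Lambda(S)=\lambda_0(\tilde S)$ along any exhaustion $S_0^{(m)}$, so the gap you want to beat shrinks at the same time. Because \cref{main1} gives no uniform lower bound on $\Lambda(K)-\lambda_0(\tilde K)$ over compact cores $K$, the two scales cannot be decoupled, and no contradiction results. (Invoking Agmon decay does not help: the decay bound is uniform in $n$, not vanishing.)

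Two further remarks. First, the obstacle you identify as the main one — that components of $F_n\cap S_0^+$ need not be discs, annuli, or cross caps — is actually the part that \emph{does} work, but not by filling holes. Since $\partial S_0^+$ is essential in $S$, a disc in $S$ with boundary in $S_0^+$ lies in $S_0^+$; hence every component of $F_n\cap S_0^+$ is incompressible in $F_n$ (this is precisely the mechanism in \cref{topft3}), and an incompressible connected subsurface of a disc, annulus, or cross cap is again one of these. Second, the appeal to \cref{main1} is uncomfortable structurally: in the paper \cref{main1} is proved as the compact instance of the same Theorems \cref{discs} and \cref{annuli} that establish \cref{main3}, so it is not available as an independent black box here.

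The paper avoids all of this by not attempting a reduction to the compact case. Instead it keeps the ground states on $S$, uses \cref{comploc} and the inradius estimate \cref{cheer} to pin down (after passing to a subsequence) a fixed ball $B(x_0,2\rho_0)$ on which $\varphi_n^2\ge\varepsilon_0$, lifts to the universal cover (for discs) or to a cyclic cover determined by the common isotopy class extracted in \cref{isotyp} (for annuli, via the boundary-length control of \cref{lambdad}), extracts a nonzero weak $H^1_0$ limit with Rayleigh quotient $\le\lambda_0(\tilde S)$, and derives a contradiction from unique continuation because the limit must vanish on other lifts of $B(x_0,\rho_0)$. The covering/weak-limit/unique-continuation machinery is exactly the ingredient that replaces the quantitative gap your reduction would need.
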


For complete and connected Riemannian surfaces of finite type,
$\Lambda(S)$ is always between $\lambda_0(\tilde S)$ and $\lambda_{\ess}(S)$,
by \eqref{brooks} and \eqref{boess2}.
Hence the condition $\lambda_{\ess}(S)>\lambda_0(\tilde S)$ in \cref{main3}
is obviously necessary to have the strict inequality $\Lambda(S)>\lambda_0(\tilde S)$.
The hard part of the proof of \cref{main3} is to show that the condition is also sufficient.

\begin{rem}
Another way of stating the condition $\lambda_{\ess}(S)>\lambda_0(\tilde S)$ in \cref{main3}
is to require that there is a compact subset $K$ of $S$
such that $\lambda_0(S\setminus K)>\lambda_0(\tilde{S})$.
By domain monotonicity, this condition is then also satisfied for any compact subset $K'$
of $S$ containing $K$.
\end{rem}

\begin{exa}\label{exahype}
Let $S$ be a non-compact connected surface of finite type
whose fundamental group is not cyclic.
Using a decomposition of $S$ into pairs of pants,
it becomes obvious that $S$ carries complete hyperbolic metrics
with (possibly empty) geodesic boundary.
For any such metric, we have
\begin{equation*}
  \lambda_{\ess}(S) = \Lambda(S) = \lambda_0(\tilde S) = 1/4
\end{equation*}
since the ends of $S$ are then hyperbolic cusps or funnels. 

Tempted by this equality we investigate how {\it generic} this equality is among all smooth complete metrics on a non-compact surface of finite type. 
Our next result is that it is in fact rare.
Some form of rigidity in the case of equality would of course be very interesting; compare with \cref{suseq}.
\end{exa}

\begin{prop}\label{proqua}
Let $S$ be a connected surface of finite type. \\
\begin{inparaenum}[1)]
\item\label{proqua2}
If the fundamental group of $S$ is cyclic, then $\Lambda(S)=\lambda_0(\tilde S)$
for any complete Riemannian metric on $S$. \\
\item\label{proqua3}
If $S$ is non-compact and the fundamental group of $S$ is not cyclic,
then $S$ carries complete Riemannian metrics such that $\lambda_{\ess}(S)>\Lambda(S)$.
Moreover, if $\chi(S)<0$, then such metrics may be chosen to have curvature $K\le-1$ and finite or infinite area.\\
\item\label{proqua4}
If $\lambda_0(\tilde S, \tilde g) > 0$ for some Riemannian metric $g$ on $S$,
then a generic complete Riemannian metric $g'$ on $S$ in any neighborhood of $g$
in the uniform $C^\infty$ topology satisfies the strict inequality $\lambda_{\ess}(S,g')>\Lambda(S, g')$.
\end{inparaenum}
\end{prop}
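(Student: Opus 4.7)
I treat the three items in turn.

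For (1), a connected surface $S$ of finite type with cyclic fundamental group is itself diffeomorphic to a sphere, projective plane, disc, annulus, cross cap, plane, open annulus, or open cross cap. In each case, $\mathring S$ can be exhausted by a sequence of compact subsurfaces $F_n$ with smooth boundary, each diffeomorphic to a disc, annulus, or cross cap (for the closed examples, take $F_n=S\setminus B(p,1/n)$). Domain monotonicity of $\lambda_0$ gives $\lambda_0(F_n)\searrow\lambda_0(S)$, so $\Lambda(S)\le\lambda_0(S)$. Since the cyclic group $\pi_1(S)$ is amenable, Brooks's theorem \cite{Br2} applied to the universal covering $\tilde S\to S$ gives $\lambda_0(\tilde S)=\lambda_0(S)$, and combining with \eqref{brooks} yields $\Lambda(S)=\lambda_0(\tilde S)$.

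For (2), the hypotheses force $\chi(S)<0$, so $S$ admits complete hyperbolic metrics whose ends are cusps or funnels; all such metrics satisfy $\lambda_{\ess}(S)=\lambda_0(\tilde S)=1/4$. Fix an essential, non-peripheral simple closed geodesic $\gamma\subset S$. Via Fenchel--Nielsen coordinates I choose hyperbolic metrics $g_\ell$ on $S$ in which $\ell(\gamma)=\ell$ is arbitrarily small while the end geometry is unchanged. The collar lemma then provides an embedded hyperbolic annulus $C_\ell$ around $\gamma$ of width $\sim\log(1/\ell)$; a standard test function depending only on the collar coordinate shows $\lambda_0(C_\ell,g_\ell)\to 0$ as $\ell\to 0$. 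Hence, for small $\ell$, $\Lambda(S,g_\ell)\le\lambda_0(C_\ell,g_\ell)<1/4=\lambda_{\ess}(S,g_\ell)$. Prescribing all ends to be cusps yields finite area; including at least one funnel yields infinite area; both constructions have $K\equiv-1$.

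For (3), set $\mathcal{A}:=\{g':\lambda_{\ess}(S,g')>\Lambda(S,g')\}$ and let $\mathcal{N}$ be a $C^\infty$-neighborhood of $g$. I claim $\mathcal{A}\cap\mathcal{N}$ is open and dense. \emph{Openness} follows from $\Lambda(\cdot)=\inf_F\lambda_0(F,\cdot)$ being upper semicontinuous (an infimum of functions continuous in the metric) and $\lambda_{\ess}(\cdot)=\sup_K\lambda_0(S\setminus K,\cdot)$ being lower semicontinuous (a supremum of such functions, where uniform $C^\infty$ control of the metric gives uniform comparability of Rayleigh quotients on $S\setminus K$); the hypothesis $\lambda_0(\tilde g)>0$ passes to $\mathcal{N}$ by continuity of $\lambda_0(\tilde\cdot)$ and secures positivity of $\lambda_{\ess}$ there.

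For \emph{density}, given $g''\in\mathcal{N}$ with $\Lambda(g'')=\lambda_{\ess}(g'')$ and $\varepsilon>0$, I produce $g'''\in\mathcal{A}$ within $\varepsilon$ of $g''$. Pick an admissible $F$ with $\delta:=\lambda_0(F,g'')-\Lambda(g'')$ small, and a smooth symmetric $(0,2)$-tensor $h$ compactly supported in $\mathring F$; for concreteness take the conformal direction $h=-2u\,g''$ with a non-negative bump $u$ placed near a point where the positive ground state of $(F,g'')$ is large. Hadamard's variation formula then gives $\tfrac{d}{dt}\big|_{t=0}\lambda_0(F,g''+th)=-c$ with $c>0$, and the efficiency $c/\|h\|_{C^\infty}$ is bounded below by a positive multiple of $\lambda_0(F,g'')\ge\lambda_{\ess}(g'')/2>0$. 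Since $g''':=g''+t_0h$ coincides with $g''$ outside $\supp h\subset\mathring F$, we have $\lambda_{\ess}(g''')=\lambda_{\ess}(g'')$, and choosing first $\delta\ll ct_0$ and then $t_0\|h\|_{C^\infty}<\varepsilon$ yields $\lambda_0(F,g''')<\Lambda(g'')=\lambda_{\ess}(g''')$, placing $g'''\in\mathcal{A}$. \emph{The main obstacle} is precisely this calibration: as $F$ approaches the infimum in $\Lambda(g'')$, one needs a uniform lower bound on the efficiency $c/\|h\|_{C^\infty}$. This is where the hypothesis $\lambda_0(\tilde g)>0$ is used, via positivity of $\lambda_{\ess}$ on $\mathcal{N}$ and non-degeneracy of the approximating ground states, to supply the necessary uniform Hadamard constants.
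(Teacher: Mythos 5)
Part~1) of your proof is essentially correct and matches the paper's strategy: you exhaust by admissible subsurfaces to get $\Lambda(S)\le\lambda_0(S)$, and combine with $\lambda_0(\tilde S)\le\Lambda(S)$ from Brooks; the paper instead uses the chain $\lambda_0(S)\le\lambda_0(\tilde S)\le\Lambda(S)$ together with the exhaustion argument (their Propositions on surfaces with cyclic fundamental group), but the content is the same.

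Part~2) is wrong, and the error is fundamental. You assert that for a hyperbolic collar $C_\ell$ around a short geodesic, $\lambda_0(C_\ell,g_\ell)\to0$ as $\ell\to0$. This is false: for \emph{any} compact annulus $C$ with smooth boundary in a complete hyperbolic surface one has $\lambda_0(C)\ge\Lambda(S)\ge\lambda_0(\tilde S)=1/4$ by \eqref{anasy} and \eqref{brooks}. The Rayleigh quotient of a rotationally invariant Dirichlet test function on the collar involves the weight $\cosh t$ in Fermi coordinates, which forces the quotient to stay at least $1/4$ no matter how wide the collar is; what tends to $0$ is a \emph{small eigenvalue of the closed surface} $S$, whose associated test function is roughly locally constant across the collar and does not vanish on $\partial C_\ell$ — you have conflated these two quantities. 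More structurally, the paper's Example \cref{exahype} already notes that $\Lambda(S)=\lambda_{\ess}(S)=\lambda_0(\tilde S)=1/4$ for \emph{every} complete hyperbolic metric on a non-compact finite-type surface with negative Euler characteristic, so no Fenchel--Nielsen deformation within hyperbolic metrics can ever produce $\lambda_{\ess}(S)>\Lambda(S)$. The paper instead leaves hyperbolic geometry behind: it keeps the metric hyperbolic on a core and replaces the funnels by warped products with curvature $\kappa(x)\to\kappa_\infty<-1$, which pushes $\lambda_{\ess}(S)\ge-\kappa_\infty/4$ above $\lambda_0(D)\ge\Lambda(S)$ for a fixed disc $D$, and this construction can be arranged to have either finite or infinite area. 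You would also need to address the thrice-punctured sphere, where there are no non-peripheral simple closed geodesics and no hyperbolic moduli at all.

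Part~3) has a genuine gap which you yourself flag: the density argument requires a uniform positive lower bound on the Hadamard variation efficiency $c/\|h\|_{C^\infty}$ as the admissible $F$ approaches the infimum defining $\Lambda$, but you only assert that the hypothesis $\lambda_0(\tilde g)>0$ "supplies the necessary uniform Hadamard constants" without producing them. As the near-optimal $F$ degenerate (e.g.\ drift off to infinity or their ground states flatten), there is no a priori reason the efficiency stays bounded below, and verifying it would essentially require re-proving the concentration estimates of Sections \ref{secfe}--\ref{secfe2}. The paper sidesteps this entirely with a global construction: it builds a continuous conformal family $g_t=f_tg$ with $f_t\equiv1$ on an increasing compact exhaustion and $f_t\equiv e^{-t}$ at infinity, so that $\lambda_{\ess}(S,g_t)=e^t\lambda_{\ess}(S,g)$ (by conformal invariance of the Dirichlet integral in dimension two) while $\Lambda(S,g_t)$ is controlled by the almost-minimizing $F_n$ sitting in the region where $f_t=1$. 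This gives $\Lambda(S,g_t)<\lambda_{\ess}(S,g_t)$ for all $t>0$, without any Hadamard-variation calibration. Your openness argument via upper/lower semicontinuity is fine in outline, though the paper instead quotes that uniformly $C^\infty$-close metrics are quasi-isometric with constant near $1$.
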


By \cref{main3}, $\lambda_{\ess}(S)>\Lambda(S)$ implies $\Lambda(S)>\lambda_0(\tilde S)$.

In our next result,
we generalize the main result of the third author in \cite{Mo1},
which asserts that a hyperbolic metric on the closed surface $S_g$ of genus $g\ge2$
has at most $2g-2$ eigenvalues $\le1/4+\delta$, where
\begin{equation*}
  \delta = \min\{\pi/|S|,\sys(S)^2/|S|^2\}.
\end{equation*}
Here $|S|$ denotes the area of $S$ and $\sys(S)$, the \emph{systole} of $S$,
is defined to be the minimal possible length of an essential closed curve in $S$.

\begin{thm}\label{main2}
For a closed Riemannian surface $S$ with curvature $K\le\kappa\le0$, we have
\begin{equation*}
  \Lambda(S) \ge -\frac{\kappa}4 + \frac{\sys(S)^2}{|S|^2}.
\end{equation*}
\end{thm}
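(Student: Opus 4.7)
I want to prove, for every subsurface $F \subset S$ diffeomorphic to a closed disc, annulus, or cross cap with smooth boundary, the bound
\[\lambda_0(F) \ge -\kappa/4 + \sys(S)^2/|S|^2,\]
from which the theorem follows by taking the infimum in \eqref{anasy}. The approach is to establish the sharper lower bound $h(F)^2 \ge -\kappa + 4\sys(S)^2/|S|^2$ on the Dirichlet Cheeger constant of $F$ and apply Cheeger's inequality $\lambda_0(F) \ge h(F)^2/4$. For a smoothly bounded $\Omega \Subset \mathring F$, I split the analysis according to whether the inclusion $\Omega \hookrightarrow F$ is $\pi_1$-trivial or not.

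\textbf{Trivial case.} If $\pi_1(\Omega) \to \pi_1(F)$ is trivial, then $\Omega$ is contained in a topological disc in $F$, hence lifts isometrically to the universal cover $\tilde S$. Filling in the bounded complementary components of the lift $\tilde\Omega$ inside this disc yields a simply connected $\tilde\Omega^+ \supseteq \tilde\Omega$ with $|\partial\tilde\Omega^+| \le |\partial\Omega|$, $|\tilde\Omega^+| \ge |\Omega|$, and $|\tilde\Omega^+| \le |S|$. The Bol--Fiala isoperimetric inequality on $\tilde S$ gives $|\partial\tilde\Omega^+|^2 \ge 4\pi|\tilde\Omega^+| - \kappa|\tilde\Omega^+|^2$, so $|\partial\Omega|^2/|\Omega|^2 \ge -\kappa + 4\pi/|S|$. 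The classical area comparison under $K \le 0$---a metric ball of radius $\sys(S)/2$ embeds in $S$ and has area at least $\pi\sys(S)^2/4$---yields $4\pi/|S| \ge 4\sys(S)^2/|S|^2$, completing the trivial case.

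\textbf{Essential case.} If $\pi_1(\Omega) \to \pi_1(F)$ is nontrivial, then $F$ is an annulus or cross cap, and one may reduce to the subcase where its core is essential in $S$ (otherwise lifting to $\tilde S$ returns to the trivial case). Since $\pi_1(S)$ is torsion-free in nonpositive curvature, the inclusion $\pi_1(F) \hookrightarrow \pi_1(S)$ is then injective, and every loop freely homotopic to the core has length $\ge \sys(S)$. Relative compactness of $\Omega$ in $\mathring F$ forces $\partial\Omega$ to have at least two components essential in $F$ (one separating $\Omega$ from each end of $\mathring F$), so $|\partial\Omega| \ge 2\sys(S)$ and $|\partial\Omega|^2/|\Omega|^2 \ge 4\sys(S)^2/|S|^2$. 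A separate Bol--Fiala-type estimate for annular domains (where $\chi = 0$) gives $|\partial\Omega|^2/|\Omega|^2 \ge -\kappa$.

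\textbf{Main obstacle.} The two estimates in the essential case only combine to $h(F)^2 \ge \max(-\kappa, 4\sys(S)^2/|S|^2)$, whereas the theorem demands the additive bound $h(F)^2 \ge -\kappa + 4\sys(S)^2/|S|^2$. Bridging this gap is the technical heart of the argument; I expect it requires either a sharpened isoperimetric inequality for annular domains in surfaces of curvature $\le \kappa \le 0$ that additively incorporates the length lower bound from essential boundary components, or a direct Rayleigh-quotient argument on the ground state $\phi$ of $F$ that simultaneously exploits the Laplace comparison $\Delta r \le \sqrt{-\kappa}\coth(\sqrt{-\kappa}\,r)$ (yielding $-\kappa/4$) and the systole bound on the lengths of essential level curves of $\phi$ (yielding $\sys(S)^2/|S|^2$).
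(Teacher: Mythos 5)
Your overall architecture is the same as the paper's: lower-bound the Cheeger constant of each disc, annulus, or cross cap $F \subset S$ by $-\kappa + 4\sys(S)^2/|S|^2$ and finish with Cheeger's inequality. Your trivial case (inclusion $\pi_1$-trivial) is complete and matches the paper's treatment of discs and compressible annuli, including the final step $\pi/|S| \ge \sys(S)^2/|S|^2$ via the embedded ball of radius $\sys(S)/2$.

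However, the step you flag as the ``main obstacle'' is a genuine gap, and it is precisely the technical input the proof cannot do without. You correctly observe that $|\partial\Omega| \ge 2\sys(S)$ and $|\partial\Omega|^2/|\Omega|^2 \ge -\kappa$ separately only yield the maximum, not the sum. Your first conjectured remedy is the right one: the paper invokes an isoperimetric inequality for non-disc domains of curvature $K \le \kappa < 0$,
\begin{equation*}
\left(|\partial \Omega|^2 - \ell^2\right)^{1/2} \ge \sqrt{-\kappa}\,|\Omega| + \frac{1}{\sqrt{-\kappa}}\Big(2\pi\chi(\Omega) - \int_\Omega(K-\kappa)^+\Big),
\end{equation*}
where $\ell$ is the sum of the lengths of the shortest loops freely homotopic (in $\Omega$) to the boundary circles. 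This is \cref{l1}\,(\ref{l1b}), cited from the outer inequality in formula (20) on p.~15 of Burago--Zalgaller \cite{BZ}. For $\chi(\Omega)=0$ and $K \le \kappa$, it specializes to $|\partial\Omega|^2 \ge -\kappa|\Omega|^2 + \ell^2$ (\cref{c1}.\ref{c1b}), which is exactly the additive estimate you need; combined with $\ell \ge 2\sys(S)$ (or $\ell \ge 2\sys(S)$ for the cross cap via the fact that in nonpositive curvature the shortest loop homotopic to the boundary is the soul run twice), this closes the essential case. Without either this inequality or an independent derivation of it, your proof is incomplete at its crux. You should also note that the paper routes the reduction to incompressible, connected test domains of the right topological type through \cref{cheey}, which tidies the bookkeeping your case split handles informally; this matters in particular for cross caps, where your ``two essential boundary components'' count does not apply verbatim.
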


\begin{rems}
\begin{inparaenum}[1)]
\item
For closed Riemannian surfaces $S$ with curvature $K\le\kappa<0$,
we know in general only that $\lambda_0(\tilde{S})\ge-\kappa/4$.
Therefore \cref{main2} may not imply the strict inequality $\Lambda(S)>\lambda_0(\tilde{S})$
for such $S$.
In fact, the relation between $\lambda_0(\tilde{S})$ and the right hand side in \cref{main2}
is not clear.
Our method of proof, involving isoperimetric inequalities and Cheeger's inequality,
does not seem to be sophisticated enough to capture the difference between them.

\item
The proof of \cref{main2} also applies to non-compact surfaces of finite type.
In this case one needs to define the systole as the infimum over \emph{all} homotopically
non-trivial curves, not only the essential (not homotopic to a boundary component or a puncture) ones.
For this reason, the corresponding statement is not really interesting anymore.
If $|S|<\infty$, then $\sys(S)=0$ (by a refinement of the isosystolic inequality), 
and if $|S|=\infty$, then $\sys(S)/|S|=0$.

\item
The difference $\Lambda(S)-\lambda_0(\tilde S)$ can not be estimated
from below by a positive constant, which only depends on the topology
and the area of $S$.
In fact, given any $\ve>0$ and natural number $n$,
if the metric on $S$ is hyperbolic with sufficiently small systole,
then $\lambda_n(S)<1/4+\ve$, by \cite[Satz 2]{Bu1} or the proof of Theorem 8.1.2 in \cite{Bu2}.
\end{inparaenum}
\end{rems}

One may view \cref{main2} also as an upper bound on the systole
in terms of a curvature bound and $\Lambda(S).$
Together with our next result, this explains the name \emph{analytic systole}.

For a closed Riemannian surface $S$, we say that a closed geodesic $c$ of $S$
is a \emph{systolic geodesic} if it is essential with length $L(c)=\sys(S)$.
Clearly, systolic geodesics are simple.

\begin{thm}\label{esansy}
If $S$ is a closed Riemannian surface with $\chi(S)<0$ and curvature $K\ge-1$, then
\begin{equation*}
  \Lambda(S) \le \frac{1}{4} + \frac{4\pi^2}{w^2},
\end{equation*}
where
\begin{equation*}
 w = w(\sys(S)) =
 \begin{cases}
 \arsinh(1/\sinh(\sys(S)/2))) \\
 \arsinh(1/\sinh(\sys(S))) \\
 \end{cases}
\end{equation*}
if $S$ has a two-sided systolic geodesic
or if all systolic geodesics of $S$ are one-sided, respectively.
\end{thm}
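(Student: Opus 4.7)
The strategy is to exhibit a thin collar neighborhood of a systolic geodesic $c\subset S$, verify that this collar is an embedded annulus or cross cap $F$, and bound $\lambda_0(F)$ from above by $1/4+4\pi^2/w^2$ via a test function calculation. Since $\Lambda(S)$ is the infimum of $\lambda_0(F)$ over such $F$, this will yield the claim. First, pick a systolic geodesic $c$ of $S$. If $c$ is two-sided, set $c^*=c$ and $L^*=\sys(S)$. If all systolic geodesics are one-sided, pass to the orientation double cover $\hat S\to S$: the preimage $\hat c$ of $c$ is a two-sided simple closed geodesic in $\hat S$ of length $2\sys(S)$, and we set $c^*=\hat c$, $L^*=2\sys(S)$. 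In both cases, $w=\arsinh(1/\sinh(L^*/2))$, the standard collar half-width.

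In Fermi coordinates $(t,s)\in(\R/L^*\Z)\times(-w,w)$ along $c^*$, the metric takes the warped-product form $J(t,s)^2\,dt^2+ds^2$ with $J(t,0)=1$ and $J_s(t,0)=0$. The curvature bound $K\ge-1$ together with the Jacobi equation $J_{ss}+KJ=0$ gives the Rauch comparison $J(t,s)\le\cosh s$. Embeddedness of the tube $F^*:=\{d(\cdot,c^*)<w\}$ at half-width $w$ follows from a collar-lemma-type argument combining the comparison $J\le\cosh s$ with the systolic property of $c$: in the universal cover, no two distinct lifts of $c^*$ can be joined by a perpendicular arc of length less than $2w$, since such an arc, concatenated with a subarc of $c^*$, would produce a closed curve in $S$ in a free homotopy class different from that of $c$, contradicting the systolic inequality. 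In the one-sided case, the analogous collar $\hat F^*$ in $\hat S$ is invariant under the orientation-reversing deck involution and descends to a cross cap $F\subset S$ around $c$.

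On $F^*$, use a radial Dirichlet test function $\phi=\phi(s)$ vanishing at $s=\pm w$. Substituting $\phi=\chi/\sqrt{A(s)}$ with $A(s):=\int_0^{L^*}J(t,s)\,dt$ converts the two-dimensional Rayleigh quotient into the one-dimensional Schr\"odinger-type form $\int(\chi'^2+V(s)\chi^2)\,ds/\int\chi^2\,ds$, where $V(s)=\tfrac12 A''/A-\tfrac14(A'/A)^2$. The identity $A''=-\int_0^{L^*}KJ\,dt$ together with $K\ge-1$ and $J\le\cosh s$ bounds the potential $V$ suitably, producing the $1/4$ term after integration by parts against the warping factor. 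Choosing $\chi$ of sine type with Dirichlet vanishing at $\pm w$, calibrated to the collar width, then yields a Rayleigh quotient at most $1/4+4\pi^2/w^2$. In the one-sided case, the resulting $\phi$ is even in $s$ and hence invariant under the deck involution, so descends to a valid Dirichlet test function on the cross cap $F$ with the same Rayleigh quotient. The principal technical obstacle is calibrating the test function and estimating the residual potential terms so the leading constants come out precisely as claimed; a secondary subtlety is the embeddedness step, where positive curvature is permitted and one must invoke systolicity to rule out short perpendicular arcs between lifts of $c^*$.
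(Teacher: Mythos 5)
Your overall strategy (collar of a systolic geodesic, reduce to the orientation cover for the one-sided case, test-function estimate) shares its starting point with the paper, but the analytic core is genuinely different, and there the argument has a real gap. The paper does \emph{not} do a Fermi-coordinate test-function calculation on the collar. Instead, once Buser's collar theorem \cite[Theorem 4.3.2]{Bu2} gives an embedded open annulus (or cross cap) $T$ of the claimed half-width $w$ around the systolic geodesic, the paper simply observes that for $r<w$ and a point $p$ on the geodesic the closed metric ball $\bar B(p,r)$ lies inside a compact incompressible annulus $A_r\subseteq T$; then Cheng's eigenvalue comparison theorem \cite[Theorem 1.1]{Ch1} for $K\ge-1$ gives $\lambda_0(A_r)\le\lambda_0(\bar B(p,r))\le\tfrac14+\tfrac{4\pi^2}{r^2}$, and one lets $r\to w$. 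That is the whole argument. Cheng's theorem is precisely what produces the constant $1/4$, because it compares $\bar B(p,r)$ directly to a hyperbolic disk.

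The step in your proposal that would fail is the potential estimate in the one-dimensional reduction. Writing $A(s)=\int_0^{L^*}J(t,s)\,dt$ and $u=A'/A$, the substitution $\phi=\chi/\sqrt{A}$ yields the potential $V=\tfrac12 u'+\tfrac14 u^2$, and the curvature bound $K\ge-1$ gives exactly $u'+u^2\le1$, hence only $V\le\tfrac12-\tfrac14 u^2$. Since $u(0)=0$, you get $V(0)\le\tfrac12$, not $\tfrac14$; worse, $K\ge-1$ alone does not force $u$ to stay away from $0$ for $s\ne0$ (positively curved pieces can drive $u$ back to zero), so $V$ can be close to $\tfrac12$ on sets of nontrivial measure anywhere in $(-w,w)$. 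Even in the model hyperbolic case $V=\tfrac12-\tfrac14\tanh^2 s$, a naive sine test function centered at $s=0$ produces an error of order $1/w$ rather than $1/w^2$, which is worse than $4\pi^2/w^2$ for small systole. So ``calibrating the test function'' is not a routine residual estimate; it is the essential difficulty, and it is exactly what the ball-plus-Cheng route avoids. If you want to keep your setup, the cleanest fix is to abandon the Fermi-coordinate ansatz on the full collar and instead place a geodesic ball of radius $r<w$ about a point of the central geodesic inside the collar and invoke Cheng's comparison, which is precisely the paper's proof. (The embeddedness/width claim also does need the lower curvature bound, not only systolicity; this is the content of the cited Buser theorem, so you should cite or reprove it rather than sketch a purely topological argument.)
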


Here we say that a simple closed curve in $S$ is two-sided or one-sided
if it has a tubular neigborhood which is diffeomorphic to an annulus or a cross cap, respectively.

Combining \cref{main2} and \cref{esansy},
we get that, for hyperbolic metrics,
$\Lambda(S)$ is squeezed between two functions of the systole.

\begin{cor}\label{syshype}
For closed hyperbolic surfaces, we have
\begin{equation*}
  \frac{1}4 + \frac{\sys(S)^2}{4\pi^2 \chi(S)^2}
  \le \Lambda(S)
  \le \frac{1}4 + \frac{4\pi^2}{w^2}
\end{equation*}
with $w=w(\sys(S))$ as in \cref{esansy}.
\end{cor}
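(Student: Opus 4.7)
The plan is to obtain the corollary as a direct substitution into the two theorems just proved, so there is no real obstacle; the only ingredient beyond \cref{main2} and \cref{esansy} is the Gauss--Bonnet identity.

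First I would apply \cref{main2} to a closed hyperbolic surface $S$. Since the curvature is constant equal to $-1$, we may take $\kappa=-1$, so that
\begin{equation*}
  \Lambda(S) \ge \frac{1}{4} + \frac{\sys(S)^2}{|S|^2}.
\end{equation*}
To eliminate $|S|$ in favor of the Euler characteristic, I would invoke Gauss--Bonnet: for a closed hyperbolic surface
\begin{equation*}
  |S| = -2\pi\chi(S) = 2\pi|\chi(S)|,
\end{equation*}
since $\chi(S)<0$ under our hypothesis (otherwise $S$ would be a sphere, torus, projective plane or Klein bottle and could carry no hyperbolic metric). Substituting gives $|S|^2 = 4\pi^2\chi(S)^2$, which is exactly the denominator appearing in the lower bound of the corollary.

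For the upper bound, I would apply \cref{esansy}, which requires only the curvature bound $K\ge -1$; this is trivially met with equality. The conclusion of that theorem delivers $\Lambda(S)\le 1/4+4\pi^2/w^2$ with the same $w=w(\sys(S))$ as in the statement, so no further manipulation is needed. Combining the two bounds yields the corollary. I do not anticipate any genuine difficulty; the content of the statement is entirely in \cref{main2} and \cref{esansy}, and the role of the corollary is to package their consequences in the hyperbolic setting, where $\kappa$ and $|S|$ are both pinned down and the ``squeeze'' becomes a function of $\sys(S)$ alone (given the topology).
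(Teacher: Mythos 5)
Your proof is correct and matches the paper's (implicit) argument: the corollary follows by specializing \cref{main2} to $\kappa=-1$ and eliminating $|S|$ via Gauss--Bonnet ($|S|=-2\pi\chi(S)$), combined with \cref{esansy} applied with $K\equiv-1\ge-1$. No gaps.
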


Recall that $\arsinh x=\ln(x+\sqrt{x^2+1})$.
In particular, we have
\begin{equation*}
  w(\sys(S)) \sim -\ln(\sys(S)) \to \infty
  \quad\text{as $\sys(S)\to0$.}
\end{equation*}
We conclude that the analytic systole of hyperbolic metrics on closed surfaces tends to $1/4$
if and only if their systole tends to $0$.

\subsection{Main problems and arguments}
The only surfaces $S$ in \cref{main1} and \cref{main3} with Euler characteristic $\chi(S)\ge0$
are torus and Klein bottle.
For these, the proof of the inequality $\Lambda(S)>\lambda_0(\tilde S)$ is quite elementary.
The proof of the hard direction of \cref{main3},
namely establishing the strict inequality $\Lambda(S)>\lambda_0(\tilde S)$
under the condition $\lambda_{\ess}(S)>\lambda_0(\tilde S)$,
is rather involved in the case $\chi(S)<0$.

The domain monotonicity of the first Dirichlet eigenvalue implies that $\Lambda(S)$
can not be realized by any compact subsurface $F\subseteq S$ diffeomorphic to a disc,
an annulus or a cross cap. Keeping this in mind,
our general strategy for the proof of \cref{main3} is to show that the equality
$\Lambda(S)=\lambda_0(\tilde{S})$ would imply the existence of a non-trivial
$\lambda_0(\tilde{S})$-eigenfunction $\tilde\vf$ on $\tilde{S}$ or an appropriate cyclic
quotient $\hat{S}$ of $\tilde{S}$ that vanishes on an open set.

The condition $\lambda_{\ess}(S)>\lambda_0(\tilde S)$ forces a subsurface $F$ with $\lambda_0(F)$ close to 
$\lambda_0(\tilde S)$ to stay almost completely in a large compact set
in a weighted sense, the weight being the ground state. One then works essentially within a fixed
compact subsurface of $S.$ Two main problems that we still have to overcome in establishing
the existence of $\tilde\vf$ as above are
\begin{compactenum}[1)]
\item
a priori non-existence of a fixed quotient of $\tilde S$ along a sequence of 
subsurfaces approximating $\lambda_0(\tilde{S})$ and
\item
the absence of the compact Sobolev embedding $H^1 \hookrightarrow L^2$
on these covering spaces.
\end{compactenum}

As for the first problem,
the case of cross caps can be reduced to the case of annuli by considering the two-sheeted
orientation covering of the original surface.
The case of annuli is tackled by showing that only finitely many isotopy types of annuli
have the bottom of their spectrum close to $\lambda_0(\tilde{S})$.
A keystone of the argument is \cref{lambdad} which relates the bottom of the spectrum
of compact surfaces $F$ with the sum of the lengths of shortest curves in the free
homotopy classes of the boundary circles of $F$.

To tackle the second problem, we establish, in \cref{cheer},
an inradius estimate for superlevel sets of suitably truncated
ground states of a sequence of subsurfaces $F_n$
approximating $\lambda_0(\tilde{S})$.
The inradius estimate is proved by means of isoperimetric inequalities,
extending arguments from the proof of the Cheeger inequality.

\subsection{Structure of the article}\label{artstruc}
In \cref{secii},
we collect the relevant facts about isoperimetric inequalities on Riemannian surfaces.
In \cref{secci}, we extend Osserman's refined version of the Cheeger inequality
\cite[Lemma 1]{Os} for plane domains to compact Riemannian surfaces with boundary.
We also recall Osserman's elegant proof since we will need consequences and extensions
of his arguments.
The isoperimetric inequalities from \cref{c1} and the Cheeger inequality are then used
in \cref{secq} to obtain a generalized version of \cref{main2}.
The arguments here are very much in the spirit of Osserman \cite{Os} and Croke \cite{Cr}.
As an application of our discussion, we obtain \cref{main3} for the case
where $S$ is a torus or a Klein bottle.
This section closes with the proof of \cref{esansy},
which involves methods which are different from those of the rest of the article.
Sections \ref{secfe} and \ref{secfe2} are concerned with properties of the ground states
of compact Riemannian surfaces with boundary.
The main objectives are \cref{lambdad} on the relation of the bottom of the spectrum
to other geometric quantities and \cref{cheer} on the inradius of superlevel sets
of ground states.
In \cref{sece}, we complete the proof of \cref{main3}.
\cref{secr} contains the proof of \cref{proqua} and some remarks and questions.
In particular, we draw attention to problems in optimal design
which are related to optimal estimates of the analytic systol.
In \cref{appendix}, we discuss an extension of the result of Brooks
quoted in connection with \eqref{brooks}.

\section{Isoperimetric inequalities}\label{secii}

The content of the present section is related to and extends Lemma 1 of \cite{Os}
in the way we will need it.

Let $F$ be a compact and connected surface with piecewise smooth boundary $\partial F\ne\emptyset$
and interior $\mathring F=F\setminus\partial F$.
The components of $\partial F$ are piecewise smooth circles.
Denote by $\chi=\chi(F)$ the Euler characteristic of $F.$

Assume that $F$ is endowed with a Riemannian metric
and denote by $K$ the Gauss curvature of $F$.
Let $|F|$ and $|\partial F|$ be the area of $F$ and the length of $\partial F$, respectively,
and
\begin{equation}
  \rho=\rho_F=\max\{d(x,\partial F)\mid x\in F\}
\end{equation}
be the \emph{inradius} of $F$.

For a function $f \colon F \to \R$, write $f^+=\max(f,0)$ for its positive part.
We recall the following isoperimetric inequalities.

\begin{thm}\label{l1}
For any $F$ as above and $\kappa\in\R$, we have
\begin{align}
  |\partial F|^2 &\ge -\kappa |F|^2 + 2\left(2\pi\chi - \int_F(K-\kappa)^+dx \right) |F|.
  \label{l10}\tag{1} 
\intertext{If $\kappa\le0$, then}
  |\partial F| &\ge |F|\ct_{\kappa}\rho
  + \left(2\pi\chi - \int_F(K-\kappa)^+dx \right)\tn_{\kappa}\frac\rho2.
  \label{l1a}\tag{2}
\end{align}
If $F$ is not a disc and $\kappa<0$, then
\begin{equation}
  (|\partial F|^2-\ell^2)^{1/2} \ge \sqrt{-\kappa} |F|
  + \frac1{\sqrt{-\kappa}}\left(2\pi\chi - \int_F(K-\kappa)^+dx \right).
  \label{l1b}\tag{3}
\end{equation}
where $\ell$ denotes the sum of the lengths of the shortest loops
in the free homotopy classes (in $F$) of the boundary circles of $F$.
\end{thm}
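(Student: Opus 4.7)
The plan is to derive all three inequalities from a single master differential inequality for the distance function $f(x) = d(x,\partial F)$, in the spirit of Fiala, Hartman, Huber, and Osserman. For $0\le t\le \rho$, set $\Omega_t = \{f > t\}$, $\alpha(t) = |\Omega_t|$, and $\ell(t) = |\{f = t\}|$, so that $\alpha(0) = |F|$, $\ell(0) = |\partial F|$, $\alpha(\rho) = \ell(\rho) = 0$, and the coarea formula gives $\alpha'(t) = -\ell(t)$. Combining the first variation of arc length along the equidistants with the Gauss--Bonnet theorem applied to $\Omega_t$ yields the master inequality
\begin{equation*}
  -\ell'(t) \;=\; 2\pi\chi(\Omega_t) - \int_{\Omega_t} K\, dx \;\ge\; M - \kappa\alpha(t),
\end{equation*}
where $M := 2\pi\chi(F) - \int_F (K-\kappa)^+\, dx$; here one uses the topological bound $\chi(\Omega_t) \ge \chi(F)$ together with $\int_{\Omega_t} K \le \int_F(K-\kappa)^+ + \kappa\alpha(t)$.

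For inequality \eqref{l10}, multiply the master inequality by $\ell(t) \ge 0$. Using $-\ell\ell' = -\frac{d}{dt}(\ell^2/2)$, $M\ell = -M\alpha'$, and $-\kappa\alpha\ell = \kappa\frac{d}{dt}(\alpha^2/2)$, integration from $0$ to $\rho$ together with the vanishing of $\ell$ and $\alpha$ at $\rho$ gives $|\partial F|^2 \ge 2M|F| - \kappa|F|^2$, which is \eqref{l10}. For inequality \eqref{l1a}, I would compare $(\alpha,\ell)$ with the solutions of the ODE system $-\ell' = M - \kappa\alpha$, $\alpha' = -\ell$, subject to $\alpha(\rho) = \ell(\rho) = 0$, whose extremal profile is expressible in terms of $\cs_\kappa$ and $\sn_\kappa$ (for $\kappa \le 0$). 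Reading off the value of $\ell$ at $t = 0$ from this extremal profile then yields $|\partial F| \ge |F|\ct_\kappa \rho + M\tn_\kappa(\rho/2)$.

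For inequality \eqref{l1b}, when $F$ is not a disc each boundary circle is non-trivial in $\pi_1(F)$ and hence freely homotopic to a shortest loop of length $\ell_i > 0$, with $\sum_i \ell_i = \ell \le |\partial F|$. The plan is to sharpen the argument for \eqref{l10} by replacing the initial boundary length by the shorter, geodesic length: either one cuts $F$ open along the shortest loops and applies the differential inequality to the resulting surface, or one adjusts the initial condition at $t = 0$ to reflect the geodesic lengths. A computation in the constant-curvature model of curvature $\kappa$ then shows that the excess $|\partial F|^2 - \ell^2$ is precisely what is controlled by the right-hand side of \eqref{l1b}, the factor $1/\sqrt{-\kappa}$ appearing from the profile of a half-tube around a geodesic in the model.

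The principal obstacle is establishing the topological bound $\chi(\Omega_t) \ge \chi(F)$ rigorously. Since $f$ is Lipschitz and its level sets may pass through the cut locus and critical points of $f$, where $\Omega_t$ can split components or cap off discs, a Morse-theoretic analysis on a regularization of $f$ is required to check that all such topology changes can only increase $\chi$. A secondary difficulty, specific to \eqref{l1b}, is identifying the correct comparison geometry and verifying that the shortest-loop correction $\ell$ is exactly the quantity that appears.
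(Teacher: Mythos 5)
Your proposal takes a genuinely different route from the paper. The paper does not prove the three inequalities from first principles: it cites Burago--Zalgaller directly for \eqref{l10} (their Theorem~2.2.1) and for \eqref{l1b} (the outer inequality in (20) on p.~15), and obtains \eqref{l1a} by a short computation applying BZ's Theorem~2.4.2 at $t=\rho$. You instead propose to re-derive all three from the Fiala--Hartman equidistant flow, which is essentially the machinery that underlies BZ's own proofs. Your derivation of \eqref{l10} from the master inequality $-\ell'(t)\ge M-\kappa\alpha(t)$ by multiplying by $\ell$ and integrating is clean and correct, provided the master inequality holds in the appropriate BV/measure sense.

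There are, however, gaps that keep this from being a complete proof. First, you flag the topological bound $\chi(\Omega_t)\ge\chi(F)$ as the ``principal obstacle'' requiring Morse-theoretic regularization; in fact it admits an elementary argument that you do not give. For a regular value $t$ of the distance function, every component $C$ of $\{d\le t\}$ touches $\partial F$ (a minimizing geodesic from any $x\in C$ to $\partial F$ stays in $\{d\le d(x)\}\subseteq\{d\le t\}$) and, when $\Omega_t\ne\emptyset$, also touches $\{d=t\}$ (otherwise $\partial C\subseteq\partial F$ would force $C=F$ by connectedness). Hence each such $C$ has at least two boundary circles, so $\chi(C)\le 0$, giving $\chi(\{d\le t\})\le 0$ and thus $\chi(\Omega_t)=\chi(F)-\chi(\{d\le t\})\ge\chi(F)$. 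What is genuinely hard, and what you leave unaddressed, is the analytic input: justifying that $\ell(t)$ is BV with non-positive singular part across the cut locus, so that $-\ell'(t)\ge 2\pi\chi(\Omega_t)-\int_{\Omega_t}K$ holds as a measure inequality and the subsequent integration by parts is legitimate. This is the technical heart of the Fiala--Hartman--Burago--Zalgaller theory and cannot be taken for granted. Second, for \eqref{l1a} your comparison is set up with terminal data $\alpha(\rho)=\ell(\rho)=0$, but the model so normalized fixes $\bar\alpha(0)$ and will generally not match $|F|$; to obtain the stated inequality one should instead compare $\gamma(t)=|F|-\alpha(t)$, which satisfies $\gamma''+\kappa\gamma\le \kappa|F|-M$ with $\gamma(0)=0$, $\gamma'(0)=|\partial F|$, $\gamma(\rho)=|F|$, against the ODE solution with the same initial data at $t=0$, and then evaluate at $t=\rho$ using the half-angle identity. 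Third, your plan for \eqref{l1b} is not yet an argument: you indicate a desired sharpening but do not identify the comparison geometry or explain how the shortest-loop lengths $\ell$ enter, which is precisely where the content of that inequality lies.
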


In the second inequality, $\tn_{\kappa}=\sn_{\kappa}/\cs_{\kappa}$
and $\ct_{\kappa}=\cs_{\kappa}/\sn_{\kappa}$,
where $\sn_{\kappa}$ and $\cs_{\kappa}$ are the solutions
of the differential equation $\ddot u+\kappa u=0$ with respective initial conditions
\begin{equation*}
  \sn_{\kappa}(0)=0, \, \sn_{\kappa}'(0)=1
  \quad\text{and}\quad
  \cs_{\kappa}(0)=1, \, \cs_{\kappa}'(0)=0.
\end{equation*}
The first inequality of \cref{l1} corresponds to \cite[Theorem 2.2.1]{BZ},
the third to the (outer) inequality in (20) of \cite[p.\,15]{BZ}.
We added \lq\lq in $F$\rq\rq\ in parentheses in the statement since we will use \cref{l1}
in the case where $F$ is a domain in a surface $S$.
Then the length of a shortest loop in the free homotopy class in $S$
of a boundary circle $c$ of $F$ might be smaller than the length of a shortest loop
in the free homotopy class of $c$ in $F$.

\begin{proof}[Proof of \cref{l1}\,(\ref{l1a})]
We apply \cite[Theorem 2.4.2]{BZ} in the case $t=\rho$.
The function $f=f(t)$ of \cite{BZ} measures the area of the collar of width $t$ about $\partial F$
and, therefore, we have $f(\rho)=|F|$ by the definition of $\rho$.
The function $a=a(t)$ of \cite{BZ} satisfies
\begin{equation*}
  a(\rho) = \kappa|F| - 2\pi\chi + \int_F(K-\kappa)^+dx.
\end{equation*}
In our notation, the function $\psi$ of \cite{BZ} is given by
\begin{equation*}
  \psi(t) = a(t)\frac{1-\cs_\kappa t}{\kappa} + |\partial F| \sn_\kappa t,
\end{equation*}
where we set $(1-\cs_\kappa t)/\kappa=t^2/2$ for $\kappa=0$.
Now Theorem 2.4.2 of \cite{BZ}, in the case $\kappa\le0$ and $t=\rho$,
asserts that $ f(\rho) \le \psi(\rho)$, that is, that
\begin{equation*}
  |F| \le  |F|(1-\cs_\kappa\rho)
  - \left( 2\pi\chi - \int_F(K-\kappa)^+dx \right)\frac{1-\cs_\kappa\rho}{\kappa}
  + |\partial F|\sn_{\kappa}\rho.
\end{equation*}
Therefore we get
\begin{equation*}
  |\partial F| \sn_{\kappa}\rho
  \ge |F|\cs_{\kappa}\rho + \left(2\pi\chi
  - \int_F(K-\kappa)^+dx \right)\frac{1-\cs_{\kappa}\rho}{\kappa}.
\end{equation*}
This implies \eqref{l1a}
since $(1-\cs_\kappa t)/\kappa\sn_\kappa t = \tn_\kappa(t/2)$.
\end{proof}

\begin{cor}\label{c1}
If $K\le\kappa$, then we have:
\begin{compactenum}[1)]
\item\label{c10}
If $F$ is a disc, then $|\partial F|^2\ge-\kappa |F|^2+4\pi |F|$.
\item\label{c1a}
If $\chi\ge0$ and $\kappa\le0$,
then $|\partial F| \ge |F|\ct_{\kappa}\rho$.
\item\label{c1b}
If $\chi=0$ and $\kappa\le0$,
then $|\partial F|^2 \ge - \kappa|F|^2 +\ell^2$.
\end{compactenum}
\end{cor}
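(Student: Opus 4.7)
The plan is to derive each of the three items of \cref{c1} as a direct specialization of the corresponding inequality of \cref{l1}, exploiting the fact that the curvature hypothesis $K\le\kappa$ makes the error term vanish: pointwise $(K-\kappa)^+=0$, hence $\int_F(K-\kappa)^+dx=0$. So in each case the inequality from \cref{l1} simplifies to a clean statement involving only $\chi$, $|F|$, $|\partial F|$, $\rho$, and $\ell$.

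First, for (\ref{c10}), I would set $\chi(F)=1$ (which holds since $F$ is a disc) in \eqref{l10}. The term $2\pi\chi-\int_F(K-\kappa)^+dx$ collapses to $2\pi$, and \eqref{l10} becomes exactly $|\partial F|^2\ge -\kappa|F|^2+4\pi|F|$. For (\ref{c1a}), I would apply \eqref{l1a}. The error integral vanishes, leaving
\begin{equation*}
  |\partial F| \ge |F|\ct_\kappa\rho + 2\pi\chi\,\tn_\kappa(\rho/2).
\end{equation*}
Since $\kappa\le0$ we have $\sn_\kappa(t),\cs_\kappa(t)>0$ for $t>0$, so $\tn_\kappa(\rho/2)\ge0$; combined with $\chi\ge0$ the second term is non-negative and can be dropped.

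For (\ref{c1b}), I would separate two cases. When $\kappa<0$, $F$ is an annulus or cross cap (since $\chi=0$), in particular not a disc, so \eqref{l1b} applies; the parenthetical term vanishes and squaring yields
\begin{equation*}
  |\partial F|^2 - \ell^2 \ge -\kappa|F|^2,
\end{equation*}
as desired. When $\kappa=0$, \eqref{l1b} is not available, but the asserted inequality reduces to $|\partial F|^2\ge\ell^2$, which is immediate because each boundary circle $c$ of $F$ is itself a loop in its own free homotopy class, hence has length at least that of the shortest such loop; summing over components gives $|\partial F|\ge\ell$.

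Essentially nothing here is technically hard: the work has already been done in \cref{l1}. The only spot requiring a moment of thought is the $\kappa=0$ case in (\ref{c1b}), where one must notice that \eqref{l1b} excludes this value and resort to the trivial comparison of boundary length with the total length of the shortest homotopic loops. Beyond that, the proof is a matter of recording the sign of each surviving term.
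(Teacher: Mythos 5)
Your proof is correct and follows exactly the route the paper intends: each item is obtained from the corresponding inequality of \cref{l1} by noting that $K\le\kappa$ kills the error integral, and then checking signs (with the trivial observation $|\partial F|\ge\ell$ covering the excluded case $\kappa=0$ in (\ref{c1b}), which the paper itself records in the remark following the corollary). The paper leaves these verifications implicit, so there is nothing to compare beyond agreement.
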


Note that we always have $|\partial F|^2 \ge\ell^2$,
by the definition of $\ell$.

\section{Cheeger inequality revisited}\label{secci}

In Lemma 2 of \cite{Os},
Osserman discusses a refinement of the Cheeger inequality for compact planar domains,
endowed with Riemannian metrics.
We will need an extension of Osserman's Lemma 2.

As above, we let $F$ be a compact and connected Riemannian surface $F$
with piecewise smooth boundary $\partial F\ne\emptyset$. 
The \emph{Cheeger constant} of $F$ is defined to be the number
\begin{equation*}
  h=h(F)=\inf  |\partial F'|/|F'|,
\end{equation*}
where the infimum is taken over all compact subsurfaces $F'$ of $\mathring F$
with smooth boundary.
Note that closed surfaces cannot occur as subsurfaces $F'$ of $F$
since $F$ is connected with non-empty boundary.

\begin{lem}\label{cheex}
The Cheeger constant is given by
\begin{equation*}
  h = \inf |\partial F'|/|F'|,
\end{equation*}
where the infimum is taken over all compact and connected subsurfaces $F'$ of $\mathring F$
with smooth boundary such that the boundary of each component of $F\setminus F'$
has at least one boundary circle in $\mathring F$
and contains at least one boundary circle of $F$.
\end{lem}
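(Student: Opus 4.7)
Write $h$ for the Cheeger constant as originally defined and $h'$ for the infimum in the statement of the lemma. Since the restricted class is contained in the general class, $h \le h'$ is immediate, so the content is the reverse inequality. The plan is: given any compact subsurface $F' \subset \mathring{F}$ with smooth boundary, produce a member $F''$ of the restricted class with $|\partial F''|/|F''| \le |\partial F'|/|F'|$. I would accomplish this with two geometric moves applied to $F'$, each of which does not increase the isoperimetric ratio.

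First, reduce to connected $F'$. If $F'$ decomposes into connected components $F_1, \ldots, F_k$, then
\begin{equation*}
  \frac{|\partial F'|}{|F'|} = \frac{\sum_i |\partial F_i|}{\sum_i |F_i|} \ge \min_i \frac{|\partial F_i|}{|F_i|},
\end{equation*}
so replacing $F'$ by a minimizing component preserves the bound. Next, I would show that the first condition on components of $F \setminus F'$ is automatic once $F'$ is non-empty. Indeed, for any component $U$ of $F \setminus F'$, the frontier $\bar U \setminus U$ lies in $F'$, and since $\partial F'$ is smooth, it lies in $\partial F'$. If it were empty, then $\bar U = U$ would be both open and closed in $F$, contradicting the connectedness of $F$ together with $F' \ne \emptyset$. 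Hence at least one smooth circle of $\partial F'$—necessarily lying in $\mathring F$—is a boundary circle of $U$.

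The second move enforces that each component of $F \setminus F'$ contains a boundary circle of $F$. Call a component $U$ of $F \setminus F'$ \emph{bad} if $\bar U \cap \partial F = \emptyset$; for such $U$ the entire topological boundary $\partial U$ is a union of full circles of $\partial F'$ (by the previous paragraph, and since $\bar U$ cannot meet $\partial F$). Let $U_1, \ldots, U_j$ be the bad components and form
\begin{equation*}
  F'' \;=\; F' \cup \bar U_1 \cup \cdots \cup \bar U_j.
\end{equation*}
Since each $\bar U_i \subset \mathring F$ and $\bar U_i$ meets $F'$ along a non-empty union of boundary circles of $F'$, the set $F''$ is a compact connected subsurface of $\mathring F$ whose boundary is $\partial F' \setminus \bigcup_i \partial U_i$—a union of (whole) smooth circles. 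The areas and boundary lengths behave as
\begin{equation*}
  |\partial F''| = |\partial F'| - \sum_i |\partial U_i|,
  \qquad
  |F''| = |F'| + \sum_i |U_i|,
\end{equation*}
so $|\partial F''|/|F''| \le |\partial F'|/|F'|$. By construction, every component of $F \setminus F''$ meets $\partial F$; since any circle of $\partial F$ is disjoint from $F'' \subset \mathring F$ and is itself connected, it must lie entirely in a single component of $F \setminus F''$. Hence each such component actually contains a full boundary circle of $F$, and $F''$ belongs to the restricted class.

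The step I would expect to require the most care is the one where $F'$ is enlarged by absorbing the bad components: one has to verify that the resulting $F''$ is still a smooth compact subsurface of $\mathring F$ (not inadvertently acquiring pieces of $\partial F$) and that the shared circles it absorbs are precisely full smooth circles of $\partial F'$. Both points follow from $\bar U_i \subset \mathring F$ and from the fact that $\partial F'$ consists of finitely many disjoint smooth circles, but they are the only places where the topological setup is actually used; everything else is a straightforward weighted-average or connectedness argument.
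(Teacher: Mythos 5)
Your argument is correct and follows essentially the same strategy as the paper: reduce to connected $F'$ via the weighted-average inequality, absorb the components of $F\setminus F'$ that do not meet $\partial F$ (the paper does this one component at a time, you do it in a single step), and observe that the remaining condition — a boundary circle of $F\setminus F'$ in $\mathring F$ — is automatic once $F'\ne\emptyset$ by connectedness of $F$. The only cosmetic difference is the order in which the second and third points are treated, and you supply slightly more detail on the topological bookkeeping.
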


\begin{proof}
Let $F'$ be a compact subsurface of $\mathring F$ with smooth boundary
and denote by $F'_1,\dots,F'_k$ the components of $F'$.
Then
\begin{equation*}
  |F'|=|F'_1|+\dots+|F'_k|\quad\text{and}\quad |\partial F'|=|\partial F'_1|+\dots+|\partial F'_k|
\end{equation*}
and hence
\begin{equation*}
 \inf\frac{|\partial F'_i|}{|F'_i|} \le \frac{|\partial F'_1|+\dots+|\partial F'_k|}{|F'_1|+\dots+|F'_k|} = \frac{|\partial F'|}{|F'|}.
\end{equation*}
This shows that the infimum $h$ can be taken over compact and connected
subsurfaces $F'$ of $\mathring F$ with smooth boundary.

Let now $C$ be a component of $F\setminus F'$.
Suppose first that the boundary of $C$ does not contain a boundary circle of $F$.
Then $F''=F'\cup C$ is a compact and connected subsurface of $\mathring F$
with area $|F''|>|F'|$ and length of boundary $|\partial F''|<|\partial F'|$.
It follows that the infimum $h$ is attained by compact and connected subsurfaces
$F'$ of $\mathring F$ with smooth boundary such that the boundary
of each component of $F\setminus F'$ contains a boundary circle of $F$.

If the boundary of $C$ would not have a boundary circle in $\mathring F$,
then $C$ would have to coincide with $F$ since $F$ is connected.
But then $F'$ would be empty, a contradiction.
\end{proof}

By a slight variation of the standard  terminology,
we say that a subsurface $S$ of a surface $T$ is \emph{incompressible in $T$}
if the induced maps of fundamental groups are injective,
for all connected component $C$ of $S$.
In particular, embedded discs are always incompressible.

\begin{prop}\label{cheey}
The Cheeger constant is given by
\begin{equation*}
  h = \inf |\partial F'|/|F'|,
\end{equation*}
where the infimum is taken over all incompressible compact and connected
subsurfaces $F'$ of $\mathring F$ with smooth boundary
such that no component of $F\setminus F'$ is a disc or a cross cap.
Any such $F'$ satisfies $\chi(F')\ge\chi(F)$ with equality if and only
if $F\setminus F'$ is a collared neighborhood of $\partial F$,
consisting of annuli about the boundary circles of $F$.
\end{prop}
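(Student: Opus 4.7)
The plan is to derive the proposition from \cref{cheex} by verifying that its extra restrictions---incompressibility, no disc or cross cap components of $F\setminus F'$---are already forced by the conditions of \cref{cheex}, after which the Euler characteristic bound follows by additivity.

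By \cref{cheex}, each component $C$ of $F\setminus\mathring F'$ has at least one boundary circle in $\partial F'$ and at least one in $\partial F$; since $F'\subset\mathring F$ these circles are disjoint, and so $C$ has at least two distinct boundary circles. This already excludes discs and cross caps, each of which has only one boundary circle. In the argument below I shall also use the following fact repeatedly: every boundary circle of a compact connected surface with non-empty boundary that is not a disc is essential in the fundamental group of that surface. This is immediate from the doubling construction, where each boundary circle becomes a separating essential curve in the resulting closed surface.

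For incompressibility, I argue by contradiction. If $F'$ is compressible, then after a small homotopy one finds an essential simple closed curve $\gamma\subset\mathring F'$ bounding a disc $D\subset F$ with $D\pitchfork\partial F'$, so that $\partial F'\cap\mathring D$ is a disjoint union of finitely many circles in $\mathring D$. If this union is empty, connectedness of $\mathring D$ forces $\mathring D\subset\mathring F'$ or $\mathring D\subset F\setminus F'$; the first gives $D\subset F'$ and contradicts essentiality of $\gamma$ in $F'$, while the second is impossible because $\gamma\subset\mathring F'$ has a tubular neighborhood inside $\mathring F'$ that meets $\mathring D$. Otherwise I pick an innermost circle $\gamma'$ of $\partial F'\cap\mathring D$, bounding a sub-disc $D'\subsetneq D$ with $\mathring D'\cap\partial F'=\emptyset$, so that $\mathring D'$ lies either in $\mathring F'$ or in $F\setminus F'$. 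In the first case, $\gamma'$ is a boundary circle of $F'$ that bounds the disc $D'$ in $F'$; either $F'$ is itself a disc (so $F'$ is trivially incompressible, contradicting the assumption) or the fact above makes $\gamma'$ essential in $F'$, a contradiction. In the second case, $D'$ lies in some component $C$ of $F\setminus\mathring F'$, and $\gamma'$ is a boundary circle of $C$ bounding a disc in $C$; since $C$ is not a disc by the first paragraph, the fact above again yields a contradiction.

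Finally, write $C_1,\dots,C_k$ for the components of $F\setminus\mathring F'$. Since $\partial F'$ is a disjoint union of circles, additivity of the Euler characteristic yields $\chi(F)=\chi(F')+\sum_i\chi(C_i)$. Each $C_i$ is a compact connected surface with non-empty boundary that is not a disc, so $\chi(C_i)\le0$, whence $\chi(F')\ge\chi(F)$. Equality forces $\chi(C_i)=0$ for each $i$, hence each $C_i$ is an annulus or a cross cap; the latter is excluded, and by \cref{cheex} the two boundary circles of each annular $C_i$ consist of one in $\partial F$ and one in $\partial F'$, so $C_i$ is a collar on a boundary circle of $F$. The main delicate point in the whole argument is the innermost disc analysis used to establish incompressibility; the remainder is routine surface topology.
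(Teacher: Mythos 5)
Your proof is correct and follows essentially the same route as the paper: invoke \cref{cheex} to force each component of $F\setminus F'$ to have at least two boundary circles (excluding discs and cross caps), then apply additivity of the Euler characteristic along the circles of $\partial F'$ to get $\chi(F')\ge\chi(F)$, with the equality case forcing each component to be an annulus between one circle of $\partial F$ and one of $\partial F'$. The one real departure is in the incompressibility step: you run an innermost-circle analysis of $\partial F'\cap\mathring D$ and then invoke essentiality of boundary circles of non-disc surfaces, whereas the paper observes directly that $D\setminus F'$ must consist of \emph{whole} components of $F\setminus F'$ (no path in $F\setminus F'$ can cross $\partial D\subset\mathring F'$), and these would have boundary entirely inside $\mathring F$, contradicting \cref{cheex} at once; the paper's argument is shorter and avoids the innermost-disc bookkeeping. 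One small caveat in yours: the justification by doubling for ``a boundary circle of a compact connected non-disc surface is essential'' is not quite right when there are several boundary circles, since doubling along the whole boundary then makes the chosen circle \emph{non}-separating (still essential, but for a different reason), and doubling along only that circle does not produce a closed surface. The fact is of course true and easy (if a boundary circle bounded a disc, connectedness would force the surface to equal that disc), so this does not affect the validity of the proof.
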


For example, if $F$ is an annulus,
then we only need to consider discs and incompressible annuli $F'$ in $F$;
if $F$ is a cross cap, then only discs and incompressible annuli and cross caps $F'$.

\begin{proof}[Proof of \cref{cheey}]
By \cref{cheex}, the Cheeger constant $h$ is realized by compact and connected
subsurfaces $F'$ of $\mathring F$ with smooth boundary such that each component
of $F\setminus F'$ has at least two boundary circles.
This excludes discs and cross caps as components of $F\setminus F'$.
We have
\begin{equation*}
  \chi(F) = \chi(F') + \chi(F\setminus\mathring F') \le \chi(F')
\end{equation*}
since the intersection of $F'$ with $F\setminus\mathring F'$ consists of circles
and since no component of $F\setminus\mathring F'$ is a disc.
Furthermore, equality can only occur if $\chi(F\setminus\mathring F')=0$.
By what we already know, this can only happen if the components
of $F\setminus\mathring F'$ are annuli.
By \cref{cheex} and since $F'\subseteq\mathring F$,
they constitute a collared neighborhood of $\partial F$.

It remains to show the incompressibility of $F'$.
If this would not hold,
$F'$ would contain a Jordan loop $c$ which is not contractible in $F'$,
but is contractible in $\mathring F$.
Then $c$ would be the boundary of an embedded disc $D$ in $\mathring F$
which is not contained in $F'$.
Since $\partial D\subseteq F'$,
$D\setminus F'$ would consist of components of $F\setminus F'$.
Their boundary would be in $D\subseteq\mathring F$ in contradiction to \cref{cheex}.
\end{proof}

Recall the classical Cheeger inequality.

\begin{thm}[Cheeger inequality]\label{cheei}
We have $\lambda_0(F)\ge h^2/4$.
\end{thm}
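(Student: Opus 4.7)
The plan is to prove the Cheeger inequality by the standard co-area argument, applied to the square of a test function so as to generate level sets amenable to the isoperimetric estimate built into the definition of $h$.

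First I would fix a nonzero smooth function $\varphi$ with compact support in $\mathring F$ and, by replacing $\varphi$ with $|\varphi|$ and smoothing if necessary, reduce to the case $\varphi \ge 0$. Set $u = \varphi^2$, so that $u$ is Lipschitz with compact support in $\mathring F$ and $|\nabla u| = 2 \varphi |\nabla\varphi|$. By Sard's theorem applied to $\varphi$, for almost every $t > 0$ the level set $\{\varphi = \sqrt t\}$ is a smooth embedded $1$-submanifold of $\mathring F$, hence the superlevel set $F_t = \{u > t\}$ is a compact subsurface of $\mathring F$ with smooth boundary. By definition of $h = h(F)$, each such $F_t$ satisfies $|\partial F_t| \ge h\, |F_t|$ (allowing, as discussed after \cref{cheex}, $F_t$ to have several components and applying the estimate componentwise).

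Next I would apply the co-area formula to $u$:
\begin{equation*}
\int_F |\nabla u|\, dx \;=\; \int_0^\infty |\partial F_t|\, dt \;\ge\; h \int_0^\infty |F_t|\, dt \;=\; h \int_F u\, dx \;=\; h \int_F \varphi^2\, dx.
\end{equation*}
On the other hand, by the Cauchy--Schwarz inequality,
\begin{equation*}
\int_F |\nabla u|\, dx \;=\; 2 \int_F \varphi\, |\nabla \varphi|\, dx \;\le\; 2 \Bigl(\int_F \varphi^2 \,dx\Bigr)^{1/2} \Bigl(\int_F |\nabla\varphi|^2\, dx\Bigr)^{1/2}.
\end{equation*}
Combining the two displays and squaring yields
\begin{equation*}
h^2 \int_F \varphi^2\, dx \;\le\; 4 \int_F |\nabla\varphi|^2\, dx,
\end{equation*}
i.e., $R(\varphi) \ge h^2/4$. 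Taking the infimum over admissible $\varphi$ gives $\lambda_0(F) \ge h^2/4$.

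The only technical point worth care is the justification that $F_t$ is an admissible competitor for the Cheeger constant for almost every $t$, which follows from Sard applied to $\varphi$ together with the fact that $\supp \varphi \subset \mathring F$ ensures $F_t$ is compactly contained in the interior. The rest is the usual chain co-area $\Rightarrow$ isoperimetry $\Rightarrow$ Cauchy--Schwarz, so I do not expect any serious obstacle.
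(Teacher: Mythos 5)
Your proof is correct and follows the same Osserman-style argument as the paper (co-area formula on $\varphi^2$, the isoperimetric bound from the definition of $h$, then Cauchy--Schwarz). The only cosmetic difference is that you run it for an arbitrary compactly supported test function and take the infimum at the end, whereas the paper applies the argument directly to the ground state $\varphi$ and uses $\int_F|\nabla\varphi|^2=\lambda_0\int_F\varphi^2$; both are standard and equivalent here.
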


In the proofs of \cref{lambdad} and \cref{cheer}, we will need arguments and consequences of the proof of \cref{cheei}
and, therefore, recall the elegant arguments from the proof of the corresponding Lemma 2 in \cite{Os}.

\begin{proof}[Recalling the proof of the Cheeger inequality]
Since $F$ is compact with piecewise smooth boundary,
$\lambda_0=\lambda_0(F)$ is the first Dirichlet eigenvalue of $F$.
Let $\varphi$ be the corresponding ground state 
and set $\psi=\varphi^2$.
By the Schwarz inequality, we have
\begin{equation}\label{chee1}
\begin{split}
	\int_F |\nabla \psi|
	&= \int_F 2 |\varphi| |\nabla \varphi|
	\leq 2 \left(\int_F|\nabla \varphi|^2 \right)^{1/2} \left(\int_F \varphi^2 \right)^{1/2}
	\\
	&= 2 \sqrt{\lambda_0} \int_F \varphi^2
	= 2 \sqrt{\lambda_0} \int_F \psi.
\end{split}
\end{equation}
This implies
\begin{equation}\label{chee2}
  \sqrt{\lambda_0} \ge \frac12\frac{\int_F|\nabla\psi|}{\int_F\psi}.
\end{equation}
For regular values $t>0$ of $\psi$, let $F_t=\{\psi\ge t\}$
and denote by $A(t)$ and $L(t)$ the area and the length of $F_t$
and $\partial F_t=\{\psi=t\}$, respectively.
For the null set of singular values of $\psi$, set $A(t)=L(t)=0$.
The coarea formula gives
\begin{equation}\label{chee3}
  \int_F|\nabla\psi| = \int_0^\infty L(t)dt.
\end{equation}
On the other hand, since $\int_F\psi$ computes the volume of the domain
\begin{equation*}
  \{(x,y)\in F\times\R \mid 0\le y\le \psi(x) \},
\end{equation*}
Cavalieri's principle gives
\begin{equation}\label{chee4}
  \int_F\psi = \int_0^\infty A(t)dt.
\end{equation}
By the definition of $h=h(F)$, we have
\begin{equation}\label{chee5}
  \int_F|\nabla\psi|
  = \int_0^\infty L(t)dt
  \ge h \int_0^\infty A(t)dt = h \int_F\psi.
\end{equation}
Combining \eqref{chee2} and \eqref{chee5},
we get $\lambda_0\ge h^2/4$ as asserted.
\end{proof}

\section{Quantitative estimates of $\Lambda(S)$} \label{secq}

We start with a version of \cref{main2} for surfaces with (possibly empty) boundary.

\begin{thm}\label{mainb}
Let $S$ be a compact and connected Riemannian surface, with or without boundary,
with infinite fundamental group and curvature $K\le\kappa$, where $\kappa$ is a constant.
Then we have: \\
\begin{inparaenum}[1)]
\item\label{mainb1}
If $\kappa\le0$, then
\begin{equation*}
  \Lambda(S)  \ge -\frac{\kappa}4 + \frac1{|S|}\min\left\{\pi,\frac{\sys(S)^2}{|S|}\right\}.
\end{equation*}
\item\label{mainb2}
If $\kappa>0$ and $S$ is orientable, then
\begin{equation*}
  \Lambda(S)  \ge \min\left\{\frac{\pi}{|S|}-\frac{\kappa}4,\frac{\sys(S)^2}{|S|^2}\right\}.
\end{equation*}
\item\label{mainb3}
If $\kappa>0$ and $S$ is non-orientable, then
\begin{equation*}
  \Lambda(S)  \ge \min\left\{\frac{\pi}{|S|}-\frac{\kappa}4,\frac{\sys(S)^2}{4|S|^2}\right\}.
\end{equation*}
\end{inparaenum}
\end{thm}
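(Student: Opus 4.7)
The plan is to combine the Cheeger inequality $\lambda_0(F)\ge h(F)^2/4$ from \cref{cheei} with the isoperimetric bounds of \cref{c1}, applied directly to the incompressible subsurfaces that realize the Cheeger constant. First I fix an arbitrary subsurface $F\subseteq\mathring S$ diffeomorphic to a disc, annulus, or cross cap, and use \cref{cheey} to express $h(F)$ as the infimum of $|\partial F'|/|F'|$ over connected incompressible compact subsurfaces $F'\subseteq\mathring F$ with smooth boundary. Any such $F'$ is itself a disc, annulus, or cross cap with $|F'|\le|F|\le|S|$, so the task reduces to bounding $|\partial F'|/|F'|$ from below in each of these three topological cases and then taking the minimum.

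The case analysis proceeds as follows. When $F'$ is a disc, the disc isoperimetric inequality of \cref{c1} immediately yields $|\partial F'|^2/|F'|^2\ge-\kappa+4\pi/|F'|\ge-\kappa+4\pi/|S|$. When $F'$ is an annulus or cross cap, incompressibility of $F'\hookrightarrow F$ forces the boundary circles of $F'$ to be non-trivial in $\pi_1(F)$. I first reduce to the situation in which the core of $F$ itself is essential in $S$: otherwise $F$ is contained in a disc of $S$ and $\lambda_0(F)$ is controlled from below by the disc estimate through domain monotonicity. With this reduction in place, each boundary circle of $F'$ projects to a non-trivial class in $\pi_1(S)$, hence has length at least $\sys(S)$. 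For an annulus $F'$ this gives $\ell\ge 2\sys(S)$ in the notation of \cref{l1}; for a cross cap $F'$, whose single boundary circle represents the square of the core, it gives $\ell\ge\sys(S)$. When $\kappa\le 0$ I invoke the $\chi=0$ inequality of \cref{c1} to obtain $|\partial F'|^2/|F'|^2\ge-\kappa+\ell^2/|S|^2$; when $\kappa>0$ the elementary bound $|\partial F'|^2\ge\ell^2$ produces the analogous estimate without the $-\kappa$ term. Collecting the three bounds and applying the Cheeger inequality delivers precisely the lower bounds of (1), (2), and (3).

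The main obstacle will be the cross cap. A priori one only has $\ell\ge\sys(S)$ there, which is exactly what forces the factor of $4$ loss that appears in case (3). To match the annulus bound and recover the uniform constant $\sys(S)^2/|S|^2$ in case (1) for non-orientable $S$, I would upgrade this to $\ell\ge 2\sys(S)$ by using the nonpositive curvature hypothesis: a simple essential closed curve in $S$ is primitive in $\pi_1(S)$, so on the Cartan--Hadamard universal cover the translation length of the square of the associated deck transformation is twice the primitive translation length, and the shortest loop in the boundary class of the cross cap therefore has length at least $2\sys(S)$. The remaining points---absorbing annuli or cross caps with inessential core into the disc bound by domain monotonicity, and tracking that $|F'|\le|S|$ applies uniformly across the three cases---are routine bookkeeping.
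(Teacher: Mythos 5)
Your argument is correct and follows essentially the same route as the paper: apply Cheeger's inequality, use \cref{cheey} to restrict to incompressible $F'\subseteq\mathring F$ which are discs, annuli, or cross caps, then feed in \cref{c1}, bound $\ell$ by $\sys(S)$ using incompressibility in $S$, and invoke the nonpositive-curvature translation-length argument to upgrade the cross cap boundary to $2\sys(S)$, which is exactly the mechanism behind the factor $4$ discrepancy between cases (2) and (3). The only cosmetic difference is that the paper disposes of annuli with inessential boundary by the Schoenflies trick (gluing on a disc to get a bigger disc) whereas you reduce at the level of $F$ via domain monotonicity into a containing disc; both are valid, and note that a cross cap's soul is automatically essential since $\pi_1(S)$ is torsion free, so the "inessential core" reduction is vacuous in that case.
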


\begin{proof}
For a closed disc $D$ in $S$, \cref{c1}.\ref{c10} implies that
\begin{equation}\label{ched}
  \frac{|\partial D|^2}{|D|^2} \ge -\kappa + \frac{4\pi}{|D|} \ge -\kappa + \frac{4}{|S|}\pi.
\end{equation}
Suppose now that $A$ is a closed annulus in $S$.
Suppose first that the boundary circles of $A$ are null-homotopic in $S$.
Then by the Schoenflies theorem (see also \cite[Appendix A]{BMM})
there is a disc $D$ in $S\setminus\mathring A$ such that $F'=A\cup D$ is a disc.
Then $|\partial F'|\le |\partial A|$ and $|F'|\ge |A|$.
Using \cref{c1}.\ref{c10} again,
we get that \eqref{ched} 
also holds for $A$ in place of $D$. 

Assume now that the boundary circles of $A$ are not null-homotopic in $S$.
By \cref{c1}.\ref{c1b} and the statement after it, we have
\begin{equation*}
  |\partial A|^2\ge-\min(\kappa,0)|A|^2+4l(A)^2,
\end{equation*}
where $l(A)$ denotes the length of a shortest curve in the free homotopy class
in $A$ of the two boundary circles of $A$.
Since the boundary circles of $A$ are not homotopic to zero in $S$,
we have $l(A)\ge\sys(S)$.
Hence
\begin{equation}\label{chea}
   \frac{|\partial A|^2}{|A|^2} \ge -\min(\kappa,0) + 4\frac{\sys(S)^2}{|A|^2}
   \ge -\min(\kappa,0) + \frac{4}{|S|}\frac{\sys(S)^2}{|S|}.
\end{equation}
If $C$ is a cross cap in $S$, then $S$ is not orientable.
Now the soul of $C$ is not homotopic to zero in $S$
and the fundamental group of $S$ is torsion free.
Since the boundary circle $\partial C$ of $C$ is freely homotopic to the soul of $C$, run twice,
we get that $\partial C$ is not homotopic to zero in $S$.
In particular, we always have $|\partial C|\ge\sys(S)$.
If $\kappa\le0$,
then a shortest curve in $S$ in the free homotopy class of the soul of $C$, run twice,
is a shortest curve in $S$ in the free homotopy class of the boundary circle of $C$.
Hence $|\partial C|\ge2\sys(S)$ if $\kappa\le0$.
We conclude that \eqref{chea} also holds for $C$ in place of $A$ if $\kappa\le0$. 
In the general case, $|\partial C|\ge\sys(S)$ implies a modified version of \eqref{chea}
with $C$ in place of $A$, where the factor $4$ on the right hand side is replaced by $1$.

Now the assertions of \cref{mainb} follows from the Cheeger inequality (\cref{cheei})
in combination with \cref{cheey}, \eqref{ched}, and \eqref{chea} or
the modified version of \eqref{chea}, respectively.
\end{proof}

\begin{proof}[Proof of \cref{main2}]
It remains to show that $\sys(S)^2/|S|\le\pi$ if $S$ is closed
with curvature $K\le0$.
In fact, in that case, the injectivity radius of $S$ is $\sys(S)/2$.
Then the exponential map $\exp_p$ at any point $p\in S$ is a diffeomorphism
from the disc of radius $\sys(S)/2$ in $T_pS$ to its image,
the metric ball $B=B(p,\sys(S)/2)$ about $p$ in $S$.
By comparison with the flat case, we get $|B|\ge\pi\sys(S)^2/4$ and therefore
\begin{equation*}
  \sys(S)^2/|S| < \sys(S)^2/|B| \le 4/\pi.
  \qedhere
\end{equation*}
\end{proof}

\begin{rems}\label{mainc}
\begin{inparaenum}[1)]
\item
If $S$ is a compact and connected surface with non-empty boundary,
then $S$ contains a finite graph $G$ in its interior which is a deformation retract of $S$.
Given a Riemannian metric on S, a sufficiently small tubular neighborhood $T$ of $G$ in $S$
is a Riemannian surface diffeomorphic to $S$ with $\sys(T)\ge\sys(S)$
and with arbitrarily small area.
Moreover, any upper bound on the curvature persists.
In other words, we cannot expect to remove the minimum on the right hand side
of the estimates in \cref{mainb}.

Note also that the right hand side of the inequalities in \ref{mainb2}) and \ref{mainb3})
of \cref{mainb} is positive if and only if $|S|<4\pi/\kappa$,
that is, if and only if $|S|$ is smaller
than the area of the sphere of constant curvature $\kappa>0$.

\item
In \cite[Corollary 5.2.B]{Gr}, Gromov shows that $\sys(S)^2/|S|^2\le 4/3$
for any closed Riemannian surface.
The point is, of course, that his estimate is curvature free.
His work in \cite{Gr} also implies that \[\sys(S_g)^2/|S_g|^2\le C_g(\ln g)^2/g\]
with $\limsup C_g\le 1/\pi$ as $g\to\infty$; see Section 11.3 in \cite{Ka}.
\end{inparaenum}
\end{rems}

\begin{proof}[Proof of \cref{main3} in the case $\chi(S)\ge0$]
In view of \cref{proqua}, it remains to show that $\Lambda(S)>\lambda_0(\tilde S)$
in the case where $S$ is a torus or a Klein bottle.
Then $S$ admits a flat background metric $h$ which is conformal to the given metric $g$ of $S$.
\cref{main2} applies to $h$ and shows that
\begin{equation}\label{lamsys}
  \Lambda(S,h) \ge \sys(S,h)^2/|(S,h)|^2,
\end{equation}
where $(S,h)$ denotes $S$, endowed with the metric $h$.
Furthermore, since we are in the case of surfaces,
the Dirichlet integral of smooth functions is invariant under conformal changes;
that is, we have
\begin{equation}\label{dirich}
   \int |\nabla\vf|^2\,da = \int |\nabla\vf|_h^2\,da_h.
\end{equation}
Since $S$ is compact, there is a constant $\alpha\ge1$ such that
\begin{equation*}
  \alpha^{-1}|v| \le |v|_h\le \alpha |v|
\end{equation*}
for all tangent vectors $v$ of $S$.
Using \eqref{lamsys} and \eqref{dirich},
we obtain
\begin{equation*}
  \Lambda(S) \ge \alpha^{-2}\Lambda(S,h)
  \ge \alpha^{-8}\sys(S)^2/|S|^2 > 0.
\end{equation*}
 On the other hand, the fundamental group of $S$ is amenable
and hence $\lambda_0(\tilde S)=\lambda_0(S)$ by \cite[Theorem 1]{Br2}.
Now $S$ is a torus or a Klein bottle, hence $\lambda_0(\tilde S)=0$.
\end{proof}

\begin{proof}[Proof of \cref{esansy}]
Suppose first that $S$ is orientable, that is, that $S=S_g$ for some $g\ge2$,
and let $c$ be a systolic geodesic on $S$.
Then by \cite[Theorem 4.3.2]{Bu2},
the tubular neighborhood $T$ of $c$ of width
\begin{equation*}
  w_2 = \arsinh(1/\sinh(\sys(S)/2)))
\end{equation*}
is an open annulus.
Since $c$ is essential, $T$ is incompressible.
Note also that $T$ can be exhausted by incompressible compact annuli with smooth boundary.
In particular, for any $r<w_2$,
the closed metric ball $\bar B(p,r)$ of radius $r$ about a point $p$ on $c$
is contained in an incompressible compact annulus $A_r\subseteq T$ with smooth boundary.
Since $B(p,r)\subseteq A_r$,
we may use Theorem 1.1 and the first displayed formula on page 294 of \cite{Ch1}
to conclude that
\begin{equation*}
  \lambda_0(A_r) \le \lambda_0(\bar B(p,r)) \le - \frac{\kappa}{4} + \frac{4\pi^2}{r^2}.
\end{equation*}
By the definition of $\Lambda(S)$,
we have $\Lambda(S) \le \lambda_0(A_r)$ for any $r$ as above.
Hence the claim of \cref{esansy} follows in the case $S=S_g$.

Suppose now that $S$ is not orientable.
Let $\Or(S)\to S$ be the orientation covering of $S$ and $c$ be a systolic geodesic on $S$.
There are two cases:
\begin{inparaenum}[1)]

\item\label{one}
If $c$ is one-sided, then the lift $\tilde c$ of $c$ to $\Or(S)$ is simple of length $2L$
and is invariant under the non-trivial covering transformation $f$ of $\Or(S)$.
Again by \cite[Theorem 4.3.2]{Bu2},
the tubular neighborhood $T$ of $\tilde c$ of width
\begin{equation*}
  w_1 = \arsinh(1/\sinh(\sys(S)))
\end{equation*}
is an open annulus.
Since $f$ leaves $\tilde c$ invariant, it also leaves $T$ invariant
and $T/f$ is an open cross cap with soul $c$ and width $w_1$ about $c$.
Hence for any $r<w_1$, the closed metric ball $\bar B(p,r)$ of radius $r$ about a point $p$ on $c$
is contained in a compact cross cap $C_r\subseteq T/f$ with smooth boundary.

\item\label{two}
If $c$ is two-sided, then $c$ has two lifts $c_1$ and $c_2$ to $\Or(S)$
and both are simple of length $L$.
Moreover, by \cite[Theorem 4.3.2]{Bu2},
the tubular neighborhoods $T_1$ of $c_1$ and $T_2$ of $c_2$
of width $w_2$ are open annuli and do not intersect.
Now $f$ permutes $c_1$ and $c_2$, therefore also $T_1$ and $T_2$,
and hence the tubular neighborhood $T$ of $c$ of width $w_2$ is an open annulus.
\end{inparaenum}

In both cases, \ref{one}) and \ref{two}),
we can now conclude the proof of the claim of \cref{esansy} as in the case $S=S_g$.
\end{proof}

\begin{rem}\label{eslam}
The arguments in the proof of \cref{esansy} also show that $\diam S\ge w$
with $w=w_2$ and $w=w_1$, respectively.
Hence we get
\begin{equation*}
  \lambda_{-\chi(S)} \le -\frac{\kappa}{4} + \chi(S)^2\frac{16\pi^2}{w^2}
\end{equation*}
from Corollary 2.3 of \cite{Ch1}.
In view of $\Lambda(S)<\lambda_{-\chi(S)}$,
this gives another, but weaker upper bound for $\Lambda(S)$.
\end{rem}

\section{On the ground state}\label{secfe}

Throughout this section,
we let $F$ be a compact Riemannian surface with smooth boundary $\partial F\ne\emptyset$
and $\vf$ be the ground state of $F$.
We also set $\psi=\vf^2$ and let $F_t=\{\psi\ge t\}$.
Note that $\int\psi=1$.

By the Hopf boundary lemma \cite[Lemma 3.4]{GT},
$\vf$ does not have critical points on $\partial F$.
Moreover, since $\vf>0$ in the interior $\mathring F$ of $F$,
a point in $\mathring F$ is critical for $\psi$ if and only if it is critical for $\vf$.
All points of $\partial F$ are critical for $\psi$.

In our first result,
we elaborate on the argument from the middle of page 549 in \cite{Os}.

\begin{lem}\label{topft}
Let $0<t<\max\psi$ be a regular value of $\psi$.
Then $F_t$ is a compact subsurface of $\mathring F$
such that the boundary of each component of $F\setminus F_t$
has at least one boundary circle in $\mathring F$
and contains at least one boundary circle of $F$.
\end{lem}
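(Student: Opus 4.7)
The plan is to establish the three parts of the lemma---that $F_t$ is a compact subsurface of $\mathring F$, and that every component $C$ of $F \setminus F_t$ has at least one boundary circle in $\mathring F$ and at least one on $\partial F$---in order, the first two by topology and the third by a Rayleigh-quotient competitor argument in the spirit of the proof of the Cheeger inequality.

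For the first two parts, I would use that $\psi$ is smooth on $F$, strictly positive on $\mathring F$, and zero precisely on $\partial F$. Combined with $t$ being a regular value of $\psi$ and $0 < t < \max\psi$, this makes $F_t$ a nonempty compact smooth subsurface of $\mathring F$ with boundary $\{\psi = t\}$ a disjoint union of smooth circles in $\mathring F$. Now let $C$ be a component of the open set $F \setminus F_t = \{\psi < t\}$. Since $F$ is connected and $F_t$ is nonempty, $C$ is a proper subset of $F$, so $C$ cannot be clopen, which gives $\bar C \setminus C \neq \emptyset$. Continuity of $\psi$ and the fact that $C$ is a full component of $\{\psi < t\}$ force $\bar C \setminus C \subset \partial F_t$. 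A tubular neighborhood of any circle $\gamma \subset \partial F_t$ splits into a $\{\psi > t\}$ half-annulus and a connected $\{\psi < t\}$ half-annulus, and the latter lies in a unique component of $F \setminus F_t$, so each such $\gamma$ is either entirely in $\bar C \setminus C$ or entirely disjoint from it. Hence $\bar C \setminus C$ is a nonempty union of full components of $\partial F_t$.

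The main step is to show that $C$ also contains a boundary circle of $F$. I would argue by contradiction: assume $C$ meets no boundary circle of $F$. Each boundary circle of $F$ is connected and lies in the open set $\{\psi < t\}$, and a neighborhood argument at a hypothetical point of $\partial F \cap \bar C$ then yields $\bar C \cap \partial F = \emptyset$, so $\bar C \subset \mathring F$. I would next introduce the Lipschitz competitor
\[
  \varphi_+ =
  \begin{cases}
    \sqrt{t} & \text{on } \bar C, \\
    \varphi & \text{on } F \setminus \bar C,
  \end{cases}
\]
which is continuous (because $\varphi = \sqrt{t}$ on $\partial C$), vanishes on $\partial F$ (since $\bar C$ avoids $\partial F$), and is not identically zero, hence lies in $H^1_0(F)$ and is admissible for the Rayleigh quotient. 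Since $\nabla \varphi_+$ vanishes on $\bar C$, the Dirichlet energy satisfies $\int_F |\nabla \varphi_+|^2 \leq \int_F |\nabla \varphi|^2 = \lambda_0$, while
\[
  \int_F \varphi_+^2 = 1 - \int_{\bar C} \psi + t |\bar C| > 1,
\]
with strict inequality because $\psi < t$ on the nonempty open set $C$. Therefore $R(\varphi_+) < \lambda_0$, contradicting the variational characterization of $\lambda_0$.

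I expect the main obstacle to be recognizing the right competitor $\varphi_+$: one needs $\bar C$ to lie in $\mathring F$ so that $\varphi_+ \in H^1_0(F)$, and one needs both the energy inequality and the strict inequality in the $L^2$ comparison to survive the cut-and-paste. Once these are in hand, the contradiction is immediate and the three claims combine to yield the lemma.
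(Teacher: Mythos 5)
Your proposal is correct, but your treatment of the key claim (that every component $C$ of $F\setminus F_t$ meets $\partial F$) takes a genuinely different route from the paper. The paper argues via the maximum principle: if $\bar C\cap\partial F=\emptyset$, then $\partial C\subseteq\partial F_t$, so the ground state $\vf$ restricted to $\bar C$ is a non-constant superharmonic function equal to $\sqrt t$ on $\partial C$ but strictly below $\sqrt t$ in the interior, which is impossible. You instead build a Rayleigh-quotient competitor: leveling $\vf$ to the constant $\sqrt t$ on $\bar C$, you note the Dirichlet energy cannot increase while the $L^2$-mass strictly increases, giving $R(\vf_+)<\lambda_0(F)$ and contradicting the variational characterization of the first Dirichlet eigenvalue. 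Both proofs exploit the same structure (the level-set geometry and $\psi<t$ on $C$ with equality on $\partial C$), but yours replaces the maximum principle with the variational principle; this keeps the argument in the spirit of the Cheeger-inequality arguments elsewhere in the paper, at the cost of slightly more bookkeeping---verifying $\vf_+\in H^1_0(F)$, for which the containment $\bar C\subseteq\mathring F$ is exactly what is needed, and checking that the mass comparison is strict. Your handling of the topological preliminaries (that $\partial C$ consists of full circles of $\partial F_t$, and that $C$ either contains or avoids each boundary circle of $F$) matches the paper's, so the proposal is sound.
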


\begin{proof}
Since $t$ is a regular value of $\psi$ with $0<t<\max\psi$,
$F_t$ is a compact subsurface of $\mathring F$ with smooth boundary.
If the boundary of a component $C$ of $F\setminus F_t$ would not contain
a boundary circle of $F$,
then $\varphi$ would be a non-constant superharmonic function on $C$
which attains its maximum $\sqrt t$ along $\partial F_t$, a contradiction.
Clearly, the boundary of $C$ must have at least one boundary circle
in $\partial F_t\subseteq\mathring F$.
\end{proof}

\begin{prop}\label{topft2}
Let $0<t<\max\psi$ be a regular value of $\psi$.
Then any connected component $C$ of $F_t$ is incompressible in $F$
and no component of $F\setminus C$ is a disc or a cross cap.
Furthermore, $\chi(C)\ge\chi(F)$ with equality if and only
if $F\setminus C$ is a collared neighborhood of $\partial F$,
consisting of annuli about the boundary circles of $F$.
\end{prop}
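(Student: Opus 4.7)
My strategy is to follow the structure of the proof of \cref{cheey}, using \cref{topft} in place of \cref{cheex}. I would first show that every component $A$ of $F\setminus C$ has at least one boundary circle on $\partial C$ and at least one on $\partial F$. The $\partial C$-part is forced by connectedness of $F$: a non-empty component of $F\setminus C$ not touching $\partial C$ would be clopen in $F$ and distinct from $F$. For the $\partial F$-part, I would pick a boundary circle of $A$ on $\partial C\subseteq\partial F_t$ and observe that just on its $(F\setminus F_t)$-side one finds points of $F\setminus F_t$; the component $A_0$ of $F\setminus F_t$ through such a point is connected and lies in $F\setminus C$, so $A_0\subseteq A$, and \cref{topft} supplies a boundary circle of $A_0$ (hence of $A$) on $\partial F$. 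As a disc or a cross cap has only a single boundary circle, this already rules them out as components of $F\setminus C$.

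For the incompressibility of $C$, I would argue by contradiction. If a simple loop $c\subseteq C$ were null-homotopic in $F$ but not in $C$, then, after pushing $c$ into $\mathring C$ across a collar of $\partial C$, I may assume $c\cap\partial C=\emptyset$. The loop bounds an embedded disc $D\subseteq\mathring F$, and $D\not\subseteq C$. The intersection $\partial C\cap\mathring D$ is a finite disjoint union of circles; I would pick an innermost one $\gamma$, bounding a subdisc $D'\subseteq D$ whose interior avoids $\partial C$. Connectedness then forces $\mathring{D'}\subseteq F\setminus C$ or $\mathring{D'}\subseteq\mathring C$. In the first case, any path in $F\setminus C$ from $\mathring{D'}$ to the exterior of $D'$ would have to cross $\gamma\subseteq C$, so $\mathring{D'}$ is itself an entire component of $F\setminus C$, yielding a disc component and contradicting the first part. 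In the second case, $D'\subseteq C$ is a disc bounded by a full boundary circle $\gamma$ of $C$; any path in $C$ leaving $D'$ through $\gamma$ would have to cross $\partial C$ and thus exit $C$, forcing $D'=C$, so $C$ is a disc and every loop in $C$ is trivial, contradicting the choice of $c$.

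The Euler characteristic statement then follows by additivity: $\chi(F)=\chi(C)+\chi(F\setminus\mathring C)$ since $\chi(\partial C)=0$. Each component of $F\setminus\mathring C$ is a compact connected surface with non-empty boundary that is neither a disc nor a cross cap, hence has $\chi\le0$, with equality precisely for annuli. Thus $\chi(C)\ge\chi(F)$, with equality if and only if every component of $F\setminus\mathring C$ is an annulus. In the equality case, each such annulus has two boundary circles; the first part supplies one on $\partial F$ and forces the other to lie on $\partial C$ (since $\partial F\cap\partial C=\emptyset$, the two circles cannot both be on $\partial F$), so the annulus is a collar about a single boundary circle of $F$, and these collars are in bijection with the boundary circles of $F$.

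The main obstacle is the innermost-circle analysis in the incompressibility step, and specifically the topological observation that a disc in $C$ whose boundary is a full boundary circle of $C$ forces $C$ itself to be that disc.
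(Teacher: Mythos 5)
Your proof is correct and follows essentially the same path as the paper's: the paper rules out disc and cross-cap components of $F\setminus C$ by observing that such a component, intersected with $F\setminus F_t$, would yield components of $F\setminus F_t$ with boundary in $\mathring F$ (contradicting \cref{topft}), and then defers the incompressibility and Euler-characteristic parts to the proof of \cref{cheey}, exactly the additivity and disc-bounding arguments you use. The only cosmetic difference is that for incompressibility the paper argues directly that every component of $F\setminus C$ meeting the bounding disc $D$ must lie in $D\subseteq\mathring F$ and hence has no boundary circle on $\partial F$, which dispenses with your innermost-circle case analysis; your version is nevertheless sound.
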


\begin{proof}
Suppose that a component $D$ of $F\setminus C$ would be a disc or a cross cap.
Then $D\setminus F_t$ would consist of components of $F\setminus F_t$
with boundary in $\mathring F$, a contradiction to \cref{topft}.
Substituting $C$ for $F'$,
the rest of the proof of \cref{topft2} is now more or less the same as that of \cref{cheey}.
\end{proof}

As in \cref{l1}, we denote by $\ell$ the sum of the lengths of the shortest loops
in the free homotopy classes (in $F$) of the boundary circles of $F$.
Furthermore, we let $\Lambda'(F)=\inf\lambda_0(F')$,
where the infimum is taken over all incompressible
compact and connected subsurfaces $F'$ of $\mathring F$
with smooth boundary and Euler characteristic $\chi(F')>\chi(F)$.

\begin{lem}\label{lambdad}
If $\chi(F)\le0$, then
\begin{equation*}
  \lambda_0(F)\ge \big\{1-\delta
  + 2\big(1-\frac1\delta\big)\frac{|F|}{\ell}\sqrt{\lambda_0(F)}\big\}\Lambda'(F)
\end{equation*}
for all $0<\delta<1/2$.
\end{lem}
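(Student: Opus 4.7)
The plan is to combine the Cheeger-type level-set argument recalled in the proof of \cref{cheei} with the Poincaré inequality on those components of a superlevel set of the ground state on which the bound $\lambda_0(C)\ge\Lambda'(F)$ is available. I normalize the ground state by $\int\varphi^2=1$ and set $\psi=\varphi^2$, $F_t=\{\psi\ge t\}$, $L(t)=|\partial F_t|$. The Cauchy--Schwarz/coarea argument from the proof of \cref{cheei} yields
\[
\int_0^\infty L(t)\,dt \;=\; \int_F|\nabla\psi| \;\le\; 2\sqrt{\lambda_0(F)}.
\]

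By \cref{topft2}, for regular $t$ each connected component of $F_t$ is incompressible in $F$ with Euler characteristic at least $\chi(F)$, and those with $\chi(C)=\chi(F)$ (which I shall call \emph{Type I}) arise exactly when $F\setminus C$ is an annular collar about $\partial F$; the boundary circles of such a $C$ are then freely homotopic in $F$ to the boundary circles of $F$, so $|\partial C|\ge\ell$. Setting $t^{*}:=\sup\{t : F_t \text{ has a Type I component}\}$, the bound $L(t)\ge\ell$ on $(0,t^{*})$ combined with the Cheeger-type estimate above gives
\[
\ell\, t^{*} \;\le\; \int_0^{t^{*}} L(t)\,dt \;\le\; 2\sqrt{\lambda_0(F)},
\qquad\text{so}\qquad t^{*} \;\le\; \frac{2\sqrt{\lambda_0(F)}}{\ell}.
\]

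For any $s$ with $s^2>t^{*}$, every component $C$ of $\{\varphi\ge s\}$ is an incompressible, compact, connected subsurface of $\mathring F$ with smooth boundary and with $\chi(C)>\chi(F)$; hence $\lambda_0(C)\ge\Lambda'(F)$. Since $\varphi-s$ vanishes on $\partial C$, it is admissible in $H^1_0(C)$, and summing the Dirichlet variational inequality over the components yields
\[
\lambda_0(F) \;\ge\; \int_{\{\varphi\ge s\}}|\nabla\varphi|^2 \;\ge\; \Lambda'(F)\int_{\{\varphi\ge s\}}(\varphi-s)^2.
\]
A completion of the square provides the pointwise estimate $(\varphi-s)^2 \ge (1-\delta)\varphi^2 - (1/\delta - 1)s^2$, valid for all $0<\delta<1$. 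Integrating this, combining with $\int_{\{\varphi<s\}}\varphi^2\le s^2|F|$ and with a suitable estimate for $s^2|\{\varphi\ge s\}|$, then letting $s^2\downarrow t^{*}$ and substituting $t^{*}\le 2\sqrt{\lambda_0(F)}/\ell$, produces the claimed inequality.

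The principal technical obstacle is tracking the precise coefficient $2(1-1/\delta)$ in the statement: applying the two elementary bounds $\int_{\{\varphi<s\}}\varphi^2\le s^2|F|$ and $|\{\varphi\ge s\}|\le|F|$ in parallel produces the slightly weaker coefficient $1/\delta-\delta$ in place of $1/\delta-1$, so that the two estimates must be interlaced (the $s^2|F|$ mass shared between them) for the sharp form. A subsidiary point that must be verified is that Type I components, by the monotonicity of the superlevel sets in $t$ and the topological stability provided by \cref{topft2}, persist throughout $(0,t^{*})$ away from finitely many critical values, so that the integral $\int_0^{t^{*}}L(t)\,dt\ge \ell\, t^{*}$ really holds.
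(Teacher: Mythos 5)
Your proof is correct and follows the paper's strategy: the coarea/Cheeger estimate bounds the threshold $t^*$ (the paper's $\beta$) by $2\sqrt{\lambda_0(F)}/\ell$, and testing $\varphi-s$ on $\{\varphi\ge s\}$ for $s^2>t^*$ (all of whose components have $\chi>\chi(F)$ and hence $\lambda_0\ge\Lambda'(F)$) produces the claimed lower bound after the algebraic manipulation. The paper packages the algebra via Cauchy--Schwarz plus the Peter--Paul inequality rather than your pointwise completion of the square, and it makes the selection of $\beta=t^*$ and the persistence of Type I components below it rigorous by reducing to Morse functions (via Uhlenbeck genericity and $C^0$-continuity) and inducting over the finitely many critical values, but these are equivalent implementations of the same argument.
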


\begin{proof}
Since the quantities involved in the lemma vary continuously with respect to variations
of the metric (in the $C^0$-topology),
we may assume, by Theorem 8 in \cite{Uh}, that $\varphi$ is a Morse function.
Then the critical points of $\psi$ in $\mathring F$ are non-degenerate.
Moreover, since $\vf$ does not have critical points on $\partial F$,
$F\setminus F_t$ is a collared neighborhood of $\partial F$,
consisting of annuli about the boundary circles of $F$, for all sufficiently small $t>0$.
On the other hand, for $t<\max\psi$ suffciently close to $\max\psi$,
$F_t$ is a union of embedded discs, one for each maximum point of $\psi$.
Hence the topology of $F_t$ undergoes changes as $t$ increases from $0$ to $\max\psi$.

Since $\vf$ is a Morse function, $\psi$ has only finitely many critical points in $\mathring F$.
By \cref{topft}, $\psi$ does not have local minima in $\mathring F$.
Hence critical points of $\psi$ in $\mathring F$ are saddle points and local maxima.

Let $0=\beta_0<\dots<\beta_m=\max\psi$ be the finite sequence of critical values of $\psi$
and choose $\ve>0$ with $\ve<\min\{\beta_{i+1}-\beta_i\}$.
In a first step,
we select now a critical value $\beta=\beta_i$ according to specific requirements.

By \cref{topft2},
each component $C$ of $F_{\beta_1+\ve}$ has Euler characteristic $\chi(C)\ge\chi(F)$.
Therefore there are two cases.
Either each component $C$ of $F_{\beta_1+\ve}$ has Euler characteristic $\chi(C)>\chi(F)$.
Then we set $\beta=\beta_1$.
Or else there is a component $C$ with $\chi(C)=\chi(F)$.
Then $F\setminus C$ is a collared neighborhood of $\partial F$,
consisting of annuli about the boundary circles of $F$, by \cref{topft2}.
In that case, by \cref{topft}, the other components of $F_{\beta_1+\ve}$ are discs
contained in these annuli.

We assume that we are in the second case and consider the second critical value $\beta_2$.
By \cref{topft2}, there are again two cases.
Either each component $C$ of $F_{\beta_2+\ve}$ has Euler characteristic $\chi(C)>\chi(F)$;
then we set $\beta=\beta_2$.
Or else there is a component $C$ of $F_{\beta_2+\ve}$  with $\chi(C)=\chi(F)$.
Then $F\setminus C$ is a collared neighborhood of $\partial F$
consisting of annuli about the boundary circles of $F$.
In the latter case, we pass on to the next critical value $\beta_3$.
Since $\chi(F)\le0$,
we will eventually arrive at a first critical value $\beta=\beta_i$ with the property
that the complement of a component of $F_{\beta-\ve}$ is a collared neighborhood of $\partial F$
consisting of annuli about the boundary circles of $F$
and such that each component $C$ of $F_{\beta+\ve}$ has Euler characteristic $\chi(C)>\chi(F)$.
Note that this property then holds for all sufficiently small $\ve>0$
since $\beta$ is the only critical value of $\vf$ in $(\beta_{i-1},\beta_{i+1})$.
It follows that for any regular value $0<t<\beta$ of $\psi$,
$F_t$ has a component $C$ such that $F\setminus C$
is a collared neighborhood of $\partial F$
consisting of annuli about the boundary circles of $F$.
In particular, $|\partial F_t|\ge \ell$ for all such $t$.
Using \eqref{chee2} and \eqref{chee3}, we obtain
\begin{equation}\label{cheeta5}
  \beta \ell \le \int_0^\infty L(t) dt \le 2\sqrt{\lambda_0(F)},
\end{equation}
where $L(t)$ denotes the length of $\partial F_t$.

For $\ve > 0$ as above,
the smooth function $\vf_\ve=\varphi -\sqrt{\beta + \ve}$ is smooth on $F_{\beta+\ve},$
vanishes on $\partial F_{\beta+\ve}$ and satisfies
\begin{equation}\label{terms}
\begin{split}
  \int_{F_{\beta+\ve}}\vf_\ve^2
  &= \int_{F_{\beta+\ve}}\big(\varphi - \sqrt{\beta + \ve}\big)^2 \\
  &= \int_{F_{\beta+\ve}}\varphi^2 - 2 \sqrt{\beta + \ve}
  \int_{F_{\beta+\ve}}\varphi + (\beta + \ve) |F_{\beta+\ve}|.
\end{split}
\end{equation}
Now the first term on the right hand side of \eqref{terms} satisfies
\begin{equation}\label{terms2}
  \int_{F_{\beta+\ve}}\varphi^2 \ge \int_F\vf^2 -(\beta+\ve)(|F|-|F_{\beta+\ve}|)
\end{equation}
since $\vf^2\le\beta+\ve$ on $F\setminus F_{\beta+\ve}$.
For the second term on the right hand side of \eqref{terms}, we have
\begin{equation}\label{terms4}
\begin{split}
  2 \sqrt{\beta + \ve} \int_{F_{\beta+\ve}}\varphi
  &\le 2\sqrt{\beta+\ve}|F_{\beta+\ve}|^{1/2}\big(\int_{F_{\beta+\ve}}\vf^2\big)^{1/2} \\
  &\le \frac1\delta(\beta+\ve)|F_{\beta+\ve}| + \delta \int_{F_{\beta+\ve}}\varphi^2 \\
  &\le \frac1\delta(\beta+\ve)|F_{\beta+\ve}| + \delta \int_{F}\varphi^2
\end{split}
\end{equation}
by the Schwarz inequality and the Peter and Paul principle.
Combining \eqref{terms}, \eqref{terms2}, and \eqref{terms4}
and using that the $L^2$-norm of $\vf$ is one and that $2-1/\delta<0$,
we obtain
\begin{align*}
  \int_{F_{\beta+\ve}}\vf_\ve^2 \notag
  &\ge (1-\delta) \int_F\varphi^2 -(\beta+\ve)|F| + \big(2-\frac1\delta\big)(\beta+\ve)|F_{\beta+\ve}| \\
  &\ge 1-\delta + \big(1-\frac1\delta\big)(\beta+\ve)|F|.
\end{align*}
For the Rayleigh quotient of $\vf_\ve$, we get
\begin{align*}
  &R(\vf_\ve)(1-\delta + \big(1-\frac1\delta\big)(\beta+\ve)|F|)
  \le R(\vf_\ve)\int_{F_{\beta+\ve}}\vf_\ve^2 \\
  &\hspace{19mm}
  = \int_{F_{\beta+\ve}} |\nabla \vf_\ve|^2
  = \int_{F_{\beta+\ve}} |\nabla \varphi|^2
  \le \int_F|\nabla \varphi|^2
  = \lambda_0(F).
\end{align*}
Since $F_{\beta + \ve}$ is a disjoint union of incompressible compact and connected
subsurfaces $F'$ with smooth boundary and $\chi(F')>\chi(F)$,
we also have $R(\vf_\ve) \ge \inf\Lambda'(F)$.
Letting $\ve$ tend to $0$, we finally obtain
\begin{equation}\label{lambdabe}
  \Lambda'(F)(1-\delta + \big(1-\frac1\delta\big)\beta|F|)
  \le \lambda_0(F).
\end{equation}
Combining \eqref{cheeta5} and \eqref{lambdabe}, we arrive at \cref{lambdad}.
\end{proof}


\section{On the ground state (continued)}\label{secfe2}

Let $S$ be a complete and connected Riemannian surface of finite type
with $\chi(S)<0$ and $\lambda_0(\tilde S)<\lambda_{\rm ess}(S)$.

Since $\chi(S)<0$, $S$ carries complete hyperbolic metrics.
Using a decomposition of $S$ into a finite number of pairs of pants,
it is clear that we may choose such a metric $h$ such that the connected
components of a neighborhood of the ends of $S$ is a finite union
of \emph{hyperbolic funnels}, that is, cylinders of the form $(-1,\infty)\times\R/\Z$
with metric
\begin{equation*}
  dr^2 + \cosh(r)^2d\vartheta^2.
\end{equation*}
Then the curves $r=0$ are closed $h$-geodesics of length $1$.
The original metric of $S$ will be denoted by $g$.

We fix a smooth and proper function from $S$ to $[0,\infty),$ which agrees outside a compact set with the coordinates $r$ in each of the ends.
By abuse of notation, we denote this function by $r.$
Choose an increasing sequence $0<r_0< r_1< r_2<\dots\to\infty$.
Then the subsurfaces
\begin{equation}\label{exhau}
   K_i = \{r\le r_i\}
\end{equation}
of $S$ are compact with smooth boundary $\partial K_i=\{r=r_i\}$
such that $S\setminus K_i$ is a cylindrical neighbourhood of infinity.
Furthermore,
\begin{equation}\label{exhau2}
   K_0 \subseteq K_1 \subseteq K_2 \subseteq \dots
\end{equation}
is an exhaustion of $S$.
By choosing the sequence of $r_i$ suitably, we may assume that \\
\begin{inparaenum}[a)]
\item
there exist cutoff functions $\eta_i\colon S\to [0,1]$ with $\eta_i=1$ on $K_{i}$,
$\eta_i=0$ outside $K_{i+1}$, and $|\nabla \eta_i|^2 \le 1/i$, \\
\item
$\lambda_0(\tilde S) < \lambda_0(S\setminus K_0)$, \\
\end{inparaenum}
where we note that
$\lambda_0(S\setminus K_i)<\lambda_0(S\setminus K_{i+1})\cdots\to\lambda_{\rm ess}(S)$.

In the case where $S$ is compact, we have $K_i=S$ for all $i$
and part of the following discussion becomes trivial.

We now let $F$ be a compact subsurface of $S$
with smooth boundary $\partial F\ne\emptyset$.
As in \cref{secfe}, we denote by $\vf$ the ground state of $F$ and let $F_t=\{\vf^2\ge t\}$.

\begin{lem}\label{vfreg}
For a subset $\mathcal R\subseteq(0,\max\vf^2)$ of full measure,
$F_t$ is a smooth subsurface of $\mathring F$
such that $\partial F_t=\{\vf=t\}$ and $\partial K_i$ intersect transversally for all $i$.
\end{lem}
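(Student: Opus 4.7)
The plan is to produce $\mathcal R$ as the complement, inside $(0,\max\vf^2)$, of a countable union of measure-zero sets arising from two applications of Sard's theorem. Since $\vf$ is the ground state of $F$, it is smooth on $\mathring F$, vanishes on $\partial F$, and is positive in the interior; therefore $\psi=\vf^2$ is a smooth non-negative function with $\{\psi=0\}=\partial F$, and for any $t>0$ the level set $\{\psi=t\}$ is automatically contained in $\mathring F$. In particular $F_t\subseteq\mathring F$ comes for free, so there is nothing to check at the boundary of $F$.

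First, I apply Sard's theorem to $\psi:\mathring F\to\R$, yielding a Lebesgue null set $\Sigma_0\subseteq(0,\max\psi)$ of critical values. For every $t\in(0,\max\psi)\setminus\Sigma_0$, the implicit function theorem gives that $\partial F_t=\{\psi=t\}$ is a smoothly embedded $1$-submanifold of $\mathring F$, so that $F_t$ is a smooth subsurface of $\mathring F$ in the sense required by the lemma.

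Second, for each $i\in\N$ the intersection $\mathring F\cap\partial K_i$ is a smooth $1$-manifold (the curves $\partial K_i$ being smooth by hypothesis), and I apply Sard's theorem to the smooth restriction $\psi|_{\mathring F\cap\partial K_i}$, obtaining a null set $\Sigma_i\subseteq\R$ of its critical values. For any $t$ which avoids $\Sigma_0\cup\Sigma_i$ and any $p\in\{\psi=t\}\cap\partial K_i$, the condition $d(\psi|_{\partial K_i})(p)\ne 0$ means that $T_p\partial K_i$ is not contained in the kernel of $d\psi(p)$; since that kernel is precisely $T_p\partial F_t$ and since $T_p\partial K_i$ and $T_p\partial F_t$ are both $1$-dimensional subspaces of the $2$-dimensional tangent space $T_pS$, the two lines must then span $T_pS$, which is exactly transversality of $\partial F_t$ and $\partial K_i$ at $p$. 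Putting
\begin{equation*}
  \mathcal R = (0,\max\vf^2)\setminus\bigcup_{i\ge 0}(\Sigma_0\cup\Sigma_i)
\end{equation*}
exhibits $\mathcal R$ as the complement of a countable union of null sets, hence as a full-measure subset of $(0,\max\vf^2)$, and by construction every $t\in\mathcal R$ satisfies both required conditions simultaneously.

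There is no serious obstacle. The only points that demand some care are (a) the translation of the geometric transversality condition for $\partial F_t$ and $\partial K_i$ into the analytic statement that $t$ be a regular value of the restriction $\psi|_{\partial K_i}$, and (b) the observation that the Dirichlet condition $\vf|_{\partial F}=0$ is what places $F_t$ automatically inside $\mathring F$ for every $t>0$, removing the need to worry about regular values at $\partial F$ (where $\psi$ is of course critical).
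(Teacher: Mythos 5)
Your argument is correct and follows essentially the same route as the paper: two applications of Sard's theorem---one to $\psi=\vf^2$ on $\mathring F$ and one to each restriction $\psi|_{\mathring F\cap\partial K_i}$---followed by the observation that transversality at $p$ amounts to $T_p\partial K_i\not\subseteq\ker d\psi(p)=T_p\partial F_t$. The only difference is cosmetic: the paper first extends $\vf$ to a smooth function $\tilde\vf$ on all of $S$ that is strictly negative outside $F$ and applies Sard to $\tilde\vf$ restricted to $\bigcup_i\partial K_i$, whereas you note, correctly, that the vanishing of $\psi$ on $\partial F$ already confines $\{\psi=t\}$ to $\mathring F$ for $t>0$, making the extension unnecessary.
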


\begin{proof}
Since $\vf$ is smooth up to the boundary of $F$ and has no critical points on $\partial F$,
there is a smooth extension $\tilde\vf$ of $\vf$ to $S$
such that $\tilde\vf$ is strictly negative on $S\setminus F$.
The restriction $\tilde\vf_0$ of $\tilde\vf$ to the union of the curves $\{r=r_i\}$ is then smooth,
and hence there is a set $\mathcal R_0\subseteq\R$ of full measure
such that any $t\in\mathcal R_0$ is a regular value of $\tilde\vf_0$.
Note that $\nabla\tilde\vf$ is not perpendicular to the curve $\{r=r_i\}$
at points $p\in\{r=r_i\}$ with $\tilde\vf(p)\in\mathcal R_0$.
On the other hand, $\nabla\tilde\vf$ is perpendicular to $\partial F_t$ for any
regular value $t$ of $\vf^2$ in $(0,\max\vf^2)$.
Therefore the intersection $\mathcal R$ of $\mathcal R_0$
with the set of regular values of $\vf^2$ in $(0,\max\vf^2)$ satisfies the required assertions.
\end{proof}

\begin{lem} \label{topft3}
For any $t\in\mathcal R$, the intersection $F_t \cap K_i$ is a subsurface of $F$
with piecewise smooth boundary
and any connected component $C$ of $F_t \cap K_i$ is 
incompressible in $F$.
In particular, we have $\chi(C)\ge\chi(F)$.
\end{lem}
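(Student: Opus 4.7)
The statement consists of three assertions: (a) $F_t \cap K_i$ is a $2$-subsurface of $F$ with piecewise smooth boundary; (b) each of its connected components $C$ is incompressible in $F$; (c) such a component satisfies $\chi(C) \ge \chi(F)$. Claim (a) is immediate from the definition of $\mathcal R$: since $t$ is a regular value of $\vf^2$, $\partial F_t$ is a smooth $1$-submanifold of $\mathring F$ that meets $\partial K_i$ transversally, so $\partial(F_t \cap K_i)$ consists of smooth arcs from $\partial F_t \cap K_i$ and $\partial K_i \cap F_t$ joined at finitely many corners.

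For claim (b), let $C'$ denote the connected component of $F_t$ containing $C$. By \cref{topft2} applied to $C'$ it is enough to show that $C$ is incompressible in $C'$. My plan is to check the disc form of incompressibility: every embedded disc $D \subseteq C'$ with $\partial D \subseteq C$ must satisfy $D \subseteq C$. After a small isotopy of $D$ in $F$, arrange $\partial D \subseteq \mathring C$ and $D$ transverse to $\partial K_i$. Then $D \cap \partial K_i$ is a disjoint union of simple closed curves in $\mathring D$, each of which is a full component of $\partial K_i$ because $\partial K_i$ itself is a disjoint union of circles. The key topological input is that each component of $\partial K_i$ is a core loop of a cylindrical end of $S$ and is therefore essential in $\pi_1(S)$, as $\chi(S)<0$. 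Such a component contained in $D$ would bound a subdisc of $D \subseteq S$ and hence be null-homotopic in $S$, a contradiction. Thus $D \cap \partial K_i = \emptyset$, and since $\partial D \subseteq \mathring K_i$ and $D$ is connected, $D \subseteq K_i$. Being a connected subset of $C' \cap K_i$ meeting $C$, we conclude $D \subseteq C$.

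For claim (c), I would apply the decomposition identity $\chi(F) = \chi(C) + \chi(F \setminus \mathring C) - \chi(\partial C)$. Each connected component of $\partial C$ is a piecewise smooth closed curve, so $\chi(\partial C) = 0$, and it remains to prove $\chi(F \setminus \mathring C) \le 0$. No component of $F \setminus \mathring C$ is a closed surface (otherwise it would be clopen in the connected $\mathring F$, forcing $\partial F = \emptyset$), so it suffices to rule out disc components. Suppose $D$ is one. Since $\partial D$ is connected and $\partial F$ is disjoint from $\partial C \subseteq \mathring F$, either $\partial D \subseteq \partial F$ or $\partial D \subseteq \partial C$. In the first case, $D$ fills a neighborhood of $\partial D$ in $F$ and is clopen, forcing $F = D$; then $F$ is a disc and the assertion is trivial. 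In the second, incompressibility from (b) forces $\partial D$ to bound a second disc $D'' \subseteq C$, and the embedded sphere $D \cup D''$ contradicts $\partial F \ne \emptyset$.

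The main obstacle is (b); the innermost-disc argument hinges on the essentialness of components of $\partial K_i$ in $\pi_1(S)$, which uses $\chi(S)<0$. The Euler-characteristic bookkeeping in (c) is routine once (b) is in place.
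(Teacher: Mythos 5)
Your proof is correct and follows essentially the same approach as the paper: the key step is that any disc with boundary in $K_i$ must stay in $K_i$ because the ends of $S$ are cylindrical (so the boundary circles of $K_i$ are essential), which gives incompressibility of $C$ in $F_t$; this is then chained with \cref{topft2}. You spell out the details the paper elides — the transversality argument producing full components of $\partial K_i$ inside an innermost disc, and the Euler-characteristic bookkeeping — but the underlying idea is the same.
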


\begin{proof}
For any $t\in\mathcal R$,
$\partial F_t=\{\vf=t\}$ and $\partial K_i$ intersect transversally for all $i$,
and then $F_t\cap K_i$ is a subsurface of $F$ with piecewise smooth boundary.
Since $S\setminus K_i$ is a cylindrical neighborhood of the ends of $S$,
a disc in $S$ has to be contained in $K_i$ if its boundary is in $K_i$.
Hence the components of $F_t \cap K_i$ are incompressible in $F_t$.
By \cref{topft2}, $F_t$ is incompressible in $F$.
Therefore $F_t \cap K_i$ is incompressible in $F$.
\end{proof}

\begin{lem} \label{comploc}
Assume that $\lambda_0(F)\le\theta\lambda_0(S\setminus K_0)$
for some $0<\theta<1$, and let $\ve>0$.
Then there is an integer $i_0=i_0(\theta,\ve)\ge0$ such that
\begin{equation*}
  \int_{F\cap K_i} \vf^2 \ge 1-\ve
  \quad\text{for all $i\ge i_0$.}
\end{equation*}
\end{lem}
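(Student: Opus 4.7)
The plan is an IMS-type localization argument that exploits the spectral gap between $\lambda_0(F)$ and $\lambda_0(S\setminus K_0)$ to force the $L^2$-mass of the ground state $\vf$ into $K_i$ as $i$ grows. The cutoff functions $\eta_i$ of property (a) are tailored for exactly this purpose.

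First, I would introduce the test function $w_i:=(1-\eta_i)\vf$, extended by zero to all of $S$. Since $\eta_i\equiv 1$ on $K_i$ and $\vf=0$ on $\partial F$, the function $w_i$ is Lipschitz, supported in $F\setminus\mathring{K_i}$, and vanishes on $\partial K_i$, so $w_i\in H^1_0(S\setminus K_i)$. The variational characterization of the bottom of the spectrum, together with domain monotonicity and property (b), gives
\begin{equation*}
  \int_S|\nabla w_i|^2 \ge \lambda_0(S\setminus K_i)\int_S w_i^2 \ge \lambda_0(S\setminus K_0)\int_S w_i^2.
\end{equation*}

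Next, I would compute the same Dirichlet energy using the eigenvalue equation $\Delta\vf=\lambda_0(F)\vf$. Testing against $(1-\eta_i)^2\vf$, which vanishes on $\partial F$ so no boundary term appears, and integrating by parts yields
\begin{equation*}
  \int_F(1-\eta_i)^2|\nabla\vf|^2 = \lambda_0(F)\int_F(1-\eta_i)^2\vf^2 + 2\int_F(1-\eta_i)\vf\la\nabla\eta_i,\nabla\vf\ra.
\end{equation*}
Expanding $|\nabla w_i|^2=(1-\eta_i)^2|\nabla\vf|^2-2(1-\eta_i)\vf\la\nabla\eta_i,\nabla\vf\ra+\vf^2|\nabla\eta_i|^2$ and substituting, the cross term cancels and I obtain the clean IMS identity
\begin{equation*}
  \int_S|\nabla w_i|^2 = \lambda_0(F)\int_S w_i^2 + \int_F\vf^2|\nabla\eta_i|^2.
\end{equation*}

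Combining the two displays and using $|\nabla\eta_i|^2\le 1/i$ together with $\int_F\vf^2=1$ gives
\begin{equation*}
  \bigl(\lambda_0(S\setminus K_0)-\lambda_0(F)\bigr)\int_S w_i^2 \le \frac{1}{i}.
\end{equation*}
The hypothesis $\lambda_0(F)\le\theta\lambda_0(S\setminus K_0)$ bounds the spectral gap below by the positive constant $(1-\theta)\lambda_0(S\setminus K_0)$, so $\int_S w_i^2\to 0$ as $i\to\infty$. Since $w_i=\vf$ on $F\setminus K_{i+1}$, this forces $\int_{F\setminus K_{i+1}}\vf^2\le 1/\bigl(i(1-\theta)\lambda_0(S\setminus K_0)\bigr)$, and taking $i_0$ large enough in terms of $\theta$, $\ve$ and the fixed quantity $\lambda_0(S\setminus K_0)$ yields the asserted bound $\int_{F\cap K_i}\vf^2\ge 1-\ve$ for $i\ge i_0$. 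The only point requiring any care is the cancellation producing the IMS identity; admissibility of $w_i$ as a test function and positivity of the gap are immediate from property (b) and the hypothesis.
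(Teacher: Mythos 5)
Your proof is correct and follows essentially the same route as the paper's: cut off the ground state with $1-\eta_i$, compare its Rayleigh quotient against $\lambda_0(S\setminus K_0)$ via the support condition, integrate by parts using $\Delta\vf=\lambda_0(F)\vf$, and absorb the $\lambda_0(F)$-term on the left using the spectral gap $(1-\theta)\lambda_0(S\setminus K_0)>0$. The only cosmetic differences are that you phrase the integration-by-parts step as an explicit IMS-type identity and pass through $\lambda_0(S\setminus K_i)$ before invoking domain monotonicity, whereas the paper writes the same computation in a single chain of inequalities directly against $\lambda_0(S\setminus K_0)$; these are the same argument.
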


\begin{proof}
Since $(1-\eta_i) \vf$ has support in $F\setminus K_i\subseteq S\setminus K_0$, we have
\begin{align*}
		\lambda_0(S\setminus K_0) \int_F (1-\eta_i)^2 & \vf^2
		\leq \int_F |\nabla ( (1-\eta_i)\vf)|^2
		\\
		&= \int_F \nabla ((1-\eta_i)^2\vf) \cdot \nabla \vf + \int_F \vf^2 |\nabla(1-\eta_i)|^2
		\\	
		&= \int_F ((1-\eta_i)^2\vf) \cdot \Delta\vf + \int_F \vf^2 |\nabla(1-\eta_i)|^2
		\\	
		&\leq \lambda_0(F) \int_F  (1-\eta_i)^2 \vf^2 + \frac{1}{i} \int_F \vf^2
		\\
	       &\leq \theta \lambda_0(S\setminus K_0) \int_F(1-\eta_i)^2 \vf^2 + \frac{1}{i}.
\end{align*}
Since $0<\theta<1$, we conclude that
\begin{equation*}
		\int_{F} (1-\eta_i)^2 \vf^2 \leq \frac{1}{(1-\theta)\lambda_0(S\setminus K_0)i}.
\end{equation*}
Now for $i_0$ sufficiently large,
the right hand side is smaller than $\ve$ for all $i \geq i_0-1$.
For any $i\ge i_0$, we then have
\begin{equation}
	\begin{split}
		\int_{F\cap K_{i}} \vf^2 &= 1-\int_{F\setminus K_{i}} \vf^2
		\\
		&= 1- \int_{F\setminus K_{i}} (1-\eta_{i-1})^2 \vf^2	
		\\
		&\geq 1 - \int_F (1-\eta_{i-1})^2 \vf^2
		\\
		&\geq 1- \ve.	\qedhere
	\end{split}
\end{equation}
\end{proof}

There is a sequence of constants $1\le\alpha_0\le\alpha_1\le\cdots$
such that
\begin{equation}\label{normc}
  \alpha_i^{-1}|v| \le |v|_h\le \alpha_i|v|
\end{equation}
for all tangent vectors $v$ of $S$ with foot point in $K_i$,
where no index and index $h$ indicate measurement with respect to $g$ and $h$,
respectively.
Over $K_i$,
the area elements $da$ of $g$ and $da_h$ of $h$ are then estimated by
\begin{equation}\label{normc2}
  \alpha_i^{-2}da \le da_h \le \alpha_i^2 da
\end{equation}
with corresponding inequalities for the areas of measurable subsets
and for integrals of non-negative measurable functions.

Let now again $\varphi$ be the ground state of $F$,
$t\in\mathcal R$, and $F_t=\{\vf^2\ge t\}$.
In our next result,
we estimate the inradius of $F_t \cap K_i$ for sufficiently large $i$.

\begin{lem}\label{cheer}
Let $F$ be a disc, an annulus, or a cross cap.
Assume that $\lambda_0(F)\le\theta\lambda_0(S\setminus K_0)$
for some $0<\theta<1$ and let $\delta>0$.
Then there is an integer $i_1=i_1(\theta,\delta)\ge0$ such that
the inradius $\rho(t)$ of $F_t \cap K_i$ satifies
\begin{equation*}
  \coth(\alpha_{i+1}\rho(t))
  \le \frac{2\alpha_{i+1}^3\sqrt{\lambda_0(F)+ \delta}}{1- \delta -t|F \cap K_i|}
\end{equation*}
for all $0\le t<(1-\delta)/|F \cap K_i|$ and $i\ge i_1$.
\end{lem}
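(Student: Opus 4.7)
My plan is to run the Cheeger--Osserman argument of \cref{secci}, but with two modifications: (i) use the cutoff $v=\eta_i\vf$ in place of $\vf$ in the Cheeger trick, in order to gain the extra $\delta$ over $\lambda_0(F)$ while handling the noncompactness, and (ii) apply the isoperimetric inequality \cref{c1}\,(\ref{c1a}) in the reference hyperbolic metric $h$ of curvature $-1$, translating the resulting bounds back to $g$ through the comparisons \eqref{normc}--\eqref{normc2}.

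For the first step, exactly as in the proof of \cref{comploc}, integration by parts together with $\Delta\vf=\lambda_0(F)\vf$ yields
\[
\int_F|\nabla v|^2=\lambda_0(F)\int_F\eta_i^2\vf^2+\int_F\vf^2|\nabla\eta_i|^2\le\lambda_0(F)+\frac{1}{i}.
\]
Choosing $i_1\ge 1/\delta$, Cauchy--Schwarz as in \eqref{chee1} produces the Cheeger-type bound $\int_F|\nabla(v^2)|\le 2\sqrt{\lambda_0(F)+\delta}$, the substitute for $2\sqrt{\lambda_0(F)}\int_F\psi$ in the classical proof of \cref{cheei}.

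Next comes the isoperimetric input. For $s$ in the full-measure set $\mathcal R$ from \cref{vfreg}, \cref{topft3} shows every component $C$ of $F_s\cap K_i$ is incompressible in $F$. Since $F$ is a disc, annulus, or cross cap, $C$ inherits cyclic fundamental group and hence $\chi(C)\ge 0$. \cref{c1}\,(\ref{c1a}), applied to each $C$ in the $h$-metric, gives $|\partial C|_h\ge\coth(\rho_h(C))|C|_h$. Summing over components, using $\rho_h(C)\le\rho_h(F_s\cap K_i)\le\rho_h(F_t\cap K_i)$ for $s\ge t$ together with the monotonicity of $\coth$, and invoking $\rho_h(F_t\cap K_i)\le\alpha_{i+1}\rho(t)$ from \eqref{normc}, I get
\[
|\partial(F_s\cap K_i)|_h\ge\coth(\alpha_{i+1}\rho(t))\,|F_s\cap K_i|_h.
\]

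Integrating over $s\in[t,\infty)$ and converting the $h$-quantities to $g$-quantities via \eqref{normc}--\eqref{normc2} absorbs the factor $\alpha_{i+1}^3$ appearing in the statement. For the right-hand side, Cavalieri together with the elementary bound $(\psi-t)_+\ge\psi-t$ and \cref{comploc} with $\ve=\delta$ give
\[
\int_t^\infty|F_s\cap K_i|_g\,ds=\int_{F\cap K_i}(\psi-t)_+\,da_g\ge\int_{F\cap K_i}\psi\,da_g-t|F\cap K_i|\ge 1-\delta-t|F\cap K_i|,
\]
which is the denominator of the target inequality. The main obstacle is the matching upper bound on $\int_t^\infty|\partial(F_s\cap K_i)|_g\,ds$, since $\partial(F_s\cap K_i)$ splits as $(\partial F_s\cap K_i)\cup(F_s\cap\partial K_i)$. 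The first piece is handled by coarea for $\psi$, giving $\int_t^\infty|\partial F_s\cap K_i|_g\,ds\le\int_F|\nabla\psi|_g\le 2\sqrt{\lambda_0(F)}$. The artificial second contribution is precisely what the cutoff is designed to absorb: in the collar $K_{i+1}\setminus K_i$ the function $w=v^2=\eta_i^2\psi$ interpolates between $\psi$ on $\partial K_i$ and $0$ outside $K_{i+1}$, so for a.e.\ $s>0$ the level set $\{w=s\}$ projects bijectively onto $F_s\cap\partial K_i$ along the radial direction in which $\eta_i$ varies, yielding $|\{w=s\}\cap(K_{i+1}\setminus K_i)|_g\ge|F_s\cap\partial K_i|_g$. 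Coarea for $w$ then packages both contributions into the single bound $\int_F|\nabla w|_g\le 2\sqrt{\lambda_0(F)+\delta}$. Combining this with the denominator estimate and the isoperimetric inequality above, and rearranging, yields the claim.
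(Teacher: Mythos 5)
Your overall strategy coincides with the paper's: cut off by $\eta_i$ to gain the extra $\delta$, run the Cheeger--Osserman argument, apply the isoperimetric inequality \cref{c1}\,(\ref{c1a}) in the hyperbolic background metric $h$, and convert back to $g$ via \eqref{normc}--\eqref{normc2}. However, your treatment of the spurious boundary term $F_s\cap\partial K_i$ has a genuine error. You claim that the radial projection in the collar is length non-increasing in $g$, so that $|\{w=s\}\cap(K_{i+1}\setminus K_i)|_g\ge|F_s\cap\partial K_i|_g$. This is false for a general metric $g$: if $g$ on the collar has the form $dr^2+f(r)^2d\theta^2$ with $f$ decreasing, then the level curves $\{w=s\}$, which lie in $\{r>r_i\}$, are $g$-shorter than their projections onto $\partial K_i$, and your inequality reverses. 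The monotonicity you need is special to $h$: since $r_i>0$, the circle $\{r=r_i\}$ has positive $h$-geodesic curvature toward the funnel core, so $K_i$ is $h$-convex near its boundary and the nearest-point retraction onto $K_i$ is $h$-$1$-Lipschitz on the collar. This is exactly the point of the paper's remark \lq\lq by the choice of the hyperbolic metric $h$ on $S$ and since the numbers $r_i$ defining the $K_i$ are positive\rq\rq\ when it replaces the arcs $a_j=\{\eta_i^2\vf^2=s\}\cap(F\setminus K_i)$ by segments $b_j\subseteq\partial K_i$ with the same endpoints, obtaining $|a_j|_h\ge|b_j|_h$. There is also a metric mismatch internal to your write-up: the isoperimetric estimate you derived controls $|\partial(F_s\cap K_i)|_h$, so it is the $h$-length of the boundary, not the $g$-length, that must be bounded from above.

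The fix is to pass to $h$-lengths before the projection step: by $g$-coarea and \eqref{normc} on $K_{i+1}$, one has $\alpha_{i+1}\int_F|\nabla(\eta_i^2\vf^2)|\,da_g\ge\int_0^\infty|\{\eta_i^2\vf^2=s\}|_h\,ds$; only then split the $h$-length of each level set into the piece in $K_i$ (which equals $|\{\vf^2=s\}\cap K_i|_h$) and the arcs $a_j$, replace $a_j$ by $b_j$ in $h$ to reach $|\partial(F_s\cap K_i)|_h$, and finally apply the isoperimetric inequality in $h$ and convert areas to $g$. With that correction the power count gives $\alpha_{i+1}\cdot\alpha_i^2\le\alpha_{i+1}^3$ as in the statement. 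Your \lq\lq bijective projection\rq\rq\ framing is also stronger than necessary and not justified, since $w$ need not be monotone along radial rays; it suffices that the $h$-Lipschitz retraction of each arc $a_j$ covers the segment $b_j$.
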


\begin{proof}
In a first step, we estimate the Rayleigh quotient of $\eta_i\vf$.
Computing as in the proof of \cref{comploc}, we have
\begin{align*}
  \int_F |\nabla(\eta_i\vf)|^2
  &= \int_F \nabla(\eta_i^2\vf)\nabla\vf + \int_F|\nabla\eta_i|^2\vf^2 \\
  &= \int_F \eta_i^2\vf\Delta\vf + \int_F|\nabla\eta_i|^2\vf^2 \\
  &\leq \lambda_0(F) \int_F \eta_i^2\vf^2 + 1/i.
\end{align*}
Since $\eta_i=1$ on $K_i$, we get $R(\eta_i \vf) \le \lambda_0(F) + 2/i$
for all $i\ge i_0(\theta,1/2)$, where $i_0$ is taken from \cref{comploc}.
Therefore
\begin{equation} \label{coray}
  R(\eta_i \vf) \le \lambda_0(F) + \delta
  \quad\text{for all $i \ge i_1(\theta,\delta)$,}
\end{equation}
where we may assume that $i_1(\theta,\delta)\geq i_0(\theta,\delta).$
In a second step, we follow the proof of Cheeger's inequality, \cref{cheei}.
Computing as in \eqref{chee1}, we get
\begin{equation*}
  \int_F|\nabla (\eta_i^2 \vf^2)|
  \le 2\sqrt{R(\eta_i\vf)}\int_F \eta_i^2 \vf^2
  \le 2\sqrt{R(\eta_i\vf)}.
\end{equation*}
By the coarea formula and \eqref{normc}
and since $\supp\eta_i\subseteq K_{i+1}$ and $\eta_i=1$ on $K_{i}$,
we have
\begin{equation}\label{cheeta}
	\begin{split}
  \alpha_{i+1} \int_F|\nabla (\eta_i^2\vf^2)|
 &= \alpha_{i+1} \int_0^\infty |\{\eta_i^2\vf^2=s\}|\,ds \\
 &\ge \int_t^\infty |\{\eta_i^2\vf^2=s\}|_h\,ds \\
 &= \int_t^\infty |\{\vf^2=s\}\cap K_i|_h\,ds \\
 &\hspace{9mm}
 + \int_t^\infty |\{\eta_i^2\vf^2=s\}\cap(F\setminus K_i)|_h\,ds
  	\end{split}
\end{equation}
Here we note that, for the integration, it suffices to consider $s\in\mathcal R$
which are also regular for $\eta_i^2\vf^2$.
Then $\{\vf^2=s\}$ meets $\partial K_i$ transversally and,
therefore, $\{\eta_i^2\vf^2=s\}\cap(F\setminus K_i)$ consists
of arcs $a_j$ connecting their corresponding end points
on $\{\vf^2=s\}\cap\partial K_i$.
Replacing the $a_j$ by the corresponding segments $b_j$ on $\partial K_i$,
we obtain the boundary of the subsurface $F_s\cap K_i$.
Now $|b_j|_h\le|a_j|_h$ by the choice of the hyperbolic metric $h$ on $S$
and since the numbers $r_i$ defining the $K_i$ are positive.
Hence we have
\begin{equation}
\begin{split}
|\{\vf^2=s\}\cap K_i|_h  & +  |\{\eta_i^2\vf^2=s\} \cap F\setminus K_i|_h
\\
& = |\{\vf^2=s\}\cap K_i|_h + \sum |a_j|_h
\\
& \geq |\{\vf^2=s\}\cap K_i|_h + \sum |b_j|_h
\\
& = |\partial (\{\vf^2 \geq s\}\cap K_i)|_h,
\end{split}
\end{equation}
for any $s \in \mathcal{R}.$
This implies
\begin{equation} \label{cheeta2}
\begin{split}
  \int_t^\infty |\{\vf^2=s\}&\cap K_i|_h\,ds
    + \int_t^\infty |\{\eta_i^2\vf^2=s\}\cap(F\setminus K_i)|_h\,ds
  \\
   & \geq \int_t^\infty |\partial(\{\vf^2\ge s\}\cap K_i)|_h\,ds.
\end{split}
\end{equation}
Furthermore, by \cref{topft3},
the connected components of $F_s\cap K_i$
are subsurfaces of $F$ with piecewise smooth boundary
and have non-negative Euler characteristic for any $s\in\mathcal R$.
Therefore we get
\begin{equation}\label{inroh}
\begin{split}
  \int_t^\infty |\partial(\{\vf^2\ge s\}&\cap K_i)|_h\,ds \\
  &\ge \int_t^\infty |\{\vf^2\geq s\} \cap K_i|_h\coth(\rho_h(s))\,ds \\
  &\ge \coth(\rho_h(t)) \int_t^\infty |\{\vf^2 \geq s\} \cap K_i|_h\,ds \\
  &\ge \alpha_i^{-2}\coth(\alpha_i\rho(t)) \int_t^\infty |\{\vf^2 \geq s\} \cap K_i|\,ds,
\end{split}
\end{equation}
by \cref{c1}.\ref{c1a}, \eqref{normc}, and \eqref{normc2}.
Finally, since $i_1(\theta,\delta) \geq i_0(\theta,\delta)$,
\begin{equation}\label{cheetb}
\begin{split}
  \int_t^\infty |\{\vf^2 \geq s\} &\cap K_i|\,ds
  \ge \int_0^\infty |\{\vf^2 \geq s\} \cap K_i|\,ds - t|F\cap K_i| \\
  &= \int_{F\cap K_i}\vf^2 - t|F\cap K_i|
  \geq 1- \delta -t|F\cap K_i|.
\end{split}
\end{equation}
\cref{cheer} follows now from combining \eqref{coray} -- \eqref{cheetb}.
\end{proof}

Lemmas \ref{lambdad} and \ref{cheer} will lead to the apriori estimates in \cref{cheet}
and in the proof of \cref{isotyp},
which are essential in the proof of \cref{main1}.


\section{Qualititative estimates of $\Lambda(S)$} \label{sece}

In this section, we prove \cref{main3} in the case $\chi(S)<0$.
Throughout,
we let $S$ be a complete and connected Riemannian surface of finite type and set
\begin{equation}\label{lamdac}
  \Lambda_D(S)= \inf_D \lambda_0(D),
  \hspace{3mm}
  \Lambda_A(S)= \inf_D \lambda_0(A),
  \hspace{3mm}
  \Lambda_C(S)= \inf_D \lambda_0(C),
\end{equation}
where the infimum is taken over all embedded closed discs $D$,
incompressible annuli $A$, and cross caps $C$ in $\mathring S$ with smooth boundary,
respectively.
As we will explain in \cref{sused}, we have
\begin{equation*}
  \Lambda_D(S)\ge\Lambda_A(S)
  \quad\text{and}\quad
  \Lambda(S) = \inf\{\Lambda_A(S),\Lambda_C(S)\}
\end{equation*}
if the fundamental group of $S$ is infinite.
Nevertheless, since the case of discs reveals an essential idea of the proof
and since we will need the estimate anyway,
we include the discussion of $\Lambda_D(S)$.

We fix a hyperbolic metric $h$ on $S$ as in \cref{secfe2}
and denote by $g$ the original Riemannian metric of $S$.
If not otherwise mentioned, statements refer to $g$ and not to $h$.

We will use the setup and notation from the previous section.
The following assertion is an immediate consequence of \cref{cheer}.

\begin{lem}\label{cheet}
Let $F$ be a closed disc, annulus, or cross cap in $S$
and $\varphi$ be the ground state of $F$.
Assume that $\lambda_0(F)\le\theta\lambda_0(S\setminus K_0)$ for some $0<\theta<1$.
Then the inradius $\rho(\ve)$ of $\{\vf^2\ge\ve\} \cap K_i$
satisfies $\rho(\ve)\ge\rho>0$ for all $0<\ve<1/4|K_{i_1}|$ and $i\ge i_1=i_1(\theta,1/2)$.
\end{lem}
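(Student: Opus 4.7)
My plan is to apply \cref{cheer} with $\delta = 1/2$ at the single index $i = i_1 = i_1(\theta, 1/2)$, and then leverage the monotonicity of the inradius under inclusion to extend the conclusion to all $i \ge i_1$.

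At $i = i_1$, since $F \cap K_{i_1} \subseteq K_{i_1}$, we have $|F \cap K_{i_1}| \le |K_{i_1}|$, and the hypothesis $\ve < 1/(4|K_{i_1}|)$ gives $\ve |F \cap K_{i_1}| < 1/4$. This both verifies the admissible range $t = \ve < (1-\delta)/|F \cap K_{i_1}| = 1/(2|F \cap K_{i_1}|)$ in \cref{cheer} and bounds the denominator $1 - \delta - t|F \cap K_{i_1}|$ below by $1/4$. Combined with the uniform estimate $\lambda_0(F) \le \theta \lambda_0(S \setminus K_0)$, the conclusion of \cref{cheer} reads
\begin{equation*}
  \coth(\alpha_{i_1+1}\rho(\ve)) \le 8 \alpha_{i_1+1}^3 \sqrt{\theta \lambda_0(S \setminus K_0) + 1/2} =: M,
\end{equation*}
a bound depending only on $\theta$ and the fixed geometric data of $S$, and in particular independent of $F$ and of $\ve$ in the allowed range. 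Since $\coth$ is a strictly decreasing bijection from $(0,\infty)$ onto $(1,\infty)$ and $M > 1$, inversion yields a positive lower bound
\begin{equation*}
  \rho(\ve) \ge \rho := \frac{\coth^{-1}(M)}{\alpha_{i_1+1}} > 0
\end{equation*}
for the inradius of $\{\vf^2 \ge \ve\} \cap K_{i_1}$, uniformly in $F$ and in admissible $\ve$.

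For arbitrary $i \ge i_1$, the inclusion $K_{i_1} \subseteq K_i$ gives $\{\vf^2 \ge \ve\} \cap K_{i_1} \subseteq \{\vf^2 \ge \ve\} \cap K_i$. Since the inradius is monotone under inclusion of closed sets---because $A \subseteq B$ forces $d(x, S \setminus A) \le d(x, S \setminus B)$ for every $x \in A$, and then the maximum on $A$ is dominated by the maximum on $B$---the lower bound $\rho$ transfers to the inradius of $\{\vf^2 \ge \ve\} \cap K_i$. The main obstacle sidestepped by this two-step argument is that a direct application of \cref{cheer} at large $i$ would demand a bound on $|F \cap K_i|$ uniform in $i$, which the hypotheses do not provide; performing the analytic estimate only at the single index $i = i_1$ and propagating it to larger $i$ by the purely geometric monotonicity of the inradius resolves this difficulty.
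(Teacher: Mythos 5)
The paper gives no proof of this lemma, calling it ``an immediate consequence of \cref{cheer}''; your write-up is a correct filling-in of that claim, and the two-step structure (a single application of \cref{cheer} at $i=i_1$ with $\delta=1/2$, followed by monotonicity of the inradius under inclusion to reach all $i\ge i_1$) is essentially the intended argument. All the numerical checks go through: $\ve\,|F\cap K_{i_1}|\le\ve\,|K_{i_1}|<1/4$ puts $\ve$ in the admissible range $t<(1-\delta)/|F\cap K_{i_1}|$ and bounds the denominator below by $1/4$, the hypothesis $\lambda_0(F)\le\theta\lambda_0(S\setminus K_0)$ makes the numerator uniform in $F$, and $M\ge 8\sqrt{1/2}>1$ guarantees $\coth^{-1}(M)$ is finite and positive, so $\rho=\coth^{-1}(M)/\alpha_{i_1+1}$ depends only on $\theta$ and the fixed data $(K_{i_1},\alpha_{i_1+1},\lambda_0(S\setminus K_0))$, as needed for the later subsequence extraction. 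Your monotonicity argument for the inradius is also correct: for a compact subsurface $A$ with piecewise smooth boundary, $d(x,\partial A)=d(x,S\setminus\mathring A)$ for $x\in A$, so $A\subseteq B$ gives $d(x,S\setminus\mathring A)\le d(x,S\setminus\mathring B)$ and hence $\rho_A\le\rho_B$. Your closing remark is also the right diagnosis of why one cannot simply invoke \cref{cheer} directly at large $i$: the admissible range of $t$ there is governed by $|F\cap K_i|$, which the hypothesis $\ve<1/(4|K_{i_1}|)$ does not control once $i>i_1$.
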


We now discuss the cases of discs and annuli separately.

\begin{thm} \label{discs}
If $S$ is a complete and connected Riemannian surface of finite
type with $\chi(S)<0$ and $\lambda_{\ess}(S)>\lambda_0(\tilde S)$,
then $\Lambda_D(S)>\lambda_0(\tilde{S})$.
\end{thm}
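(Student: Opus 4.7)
I argue by contradiction. Suppose $\Lambda_D(S)=\lambda_0(\tilde S)$, and choose smoothly embedded closed discs $D_n\subset\mathring S$ with $\lambda_0(D_n)\to\lambda_0(\tilde S)$ and ground states $\vf_n$. My goal is to construct a nonzero, nonnegative limit function $\tilde\vf$ on $\tilde S$ satisfying the weak eigenvalue equation on a rich enough subset of $\tilde S$ while vanishing on a nonempty open set, leading to a contradiction via unique continuation (or the strong minimum principle for nonnegative subsolutions).

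First, the hypothesis $\lambda_{\ess}(S)>\lambda_0(\tilde S)$ allows me to fix $\theta\in(0,1)$ with $\lambda_0(D_n)\le\theta\lambda_0(S\setminus K_0)$ for large $n$. \cref{comploc} then gives $\int_{D_n\cap K_i}\vf_n^2\to 1$ uniformly in $n$ as $i\to\infty$, and \cref{cheet} applied with $\delta=1/2$ produces $\rho,\ve_0>0$, an index $i_1$, and points $p_n\in K_{i_1}$ satisfying $B(p_n,\rho)\subset\{\vf_n^2\ge\ve_0\}$. Extracting a subsequence, $p_n\to p\in K_{i_1}$.

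Since $D_n$ is simply connected, it lifts isometrically to an embedded closed disc $\tilde D_n\subset\tilde S$. Applying a suitable deck transformation for each $n$, I arrange the lift $\tilde p_n$ of $p_n$ to lie in a fixed compact fundamental domain, and extract a subsequence with $\tilde p_n\to\tilde p\in\tilde S$. Let $\tilde\vf_n$ be the lifted ground state on $\tilde D_n$, extended by zero to $\tilde S$; then $\|\tilde\vf_n\|_{L^2(\tilde S)}=1$, $\|\nabla\tilde\vf_n\|_{L^2(\tilde S)}^2\to\lambda_0(\tilde S)$, and $\tilde\vf_n\ge\sqrt{\ve_0}$ on $B(\tilde p,\rho/2)$ for large $n$. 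By local Rellich compactness and a diagonal argument, a subsequence satisfies $\tilde\vf_n\to\tilde\vf$ in $L^2_{\mathrm{loc}}(\tilde S)$ and weakly in $H^1_{\mathrm{loc}}(\tilde S)$, with $\tilde\vf\ge\sqrt{\ve_0}$ on $B(\tilde p,\rho/2)$, so $\tilde\vf\not\equiv 0$. Integration by parts on $\tilde D_n$, together with the Dirichlet condition and the Hopf sign $\partial_\nu\tilde\vf_n\le 0$, gives for any nonnegative $\chi\in C_c^\infty(\tilde S)$ the inequality $\int\nabla\tilde\vf_n\cdot\nabla\chi\le\lambda_0(D_n)\int\tilde\vf_n\chi$; passing to the limit, $\tilde\vf$ satisfies $\int\nabla\tilde\vf\cdot\nabla\chi\le\lambda_0(\tilde S)\int\tilde\vf\chi$ for all such $\chi$, and moreover satisfies the weak eigenvalue equation $\int\nabla\tilde\vf\cdot\nabla\chi=\lambda_0(\tilde S)\int\tilde\vf\chi$ on the open set $U=\mathring{\liminf\tilde D_n}$.

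The main obstacle is to produce a nonempty open set $V\subset\tilde S$ on which $\tilde\vf\equiv 0$: combined with the nonnegative subsolution property and the equation on $U$, this gives via unique continuation that $\tilde\vf\equiv 0$ on the connected manifold $\tilde S$, contradicting $\tilde\vf\ge\sqrt{\ve_0}$ on $B(\tilde p,\rho/2)$. The dichotomy is that either (a) a subsequence of the (deck-adjusted) $\tilde D_n$ fails to cover some common nonempty open subset of $\tilde S$, which then serves as $V$ directly, or (b) the $\tilde D_n$ uniformly exhaust $\tilde S$, in which case $\|\tilde\vf\|_{L^2(\tilde S)}=1$ and $\tilde\vf$ becomes a positive $L^2$-ground state for $\lambda_0(\tilde S)$; by uniqueness of the ground state and $\pi_1(S)$-equivariance such $\tilde\vf$ must be $\pi_1(S)$-invariant and descend to $S$, which contradicts $\|\tilde\vf\|_{L^2(\tilde S)}\le 1$ since $\pi_1(S)$ is infinite. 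Carefully controlling the integration-by-parts boundary terms at $\partial\tilde D_n$ in the limit and arranging the extraction so that the dichotomy is clean are the technical crux of the proof.
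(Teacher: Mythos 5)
Your overall strategy coincides with the paper's: take a sequence of discs $D_n$ with $\lambda_0(D_n)\to\lambda_0(\tilde S)$, use \cref{cheet} to pin down a uniform ball of positivity, lift to the universal cover, pass to a weak limit $\tilde\vf$, and contradict unique continuation. However, there are two genuine gaps that prevent the proposal from closing.

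\textbf{Gap 1: the dichotomy is spurious, and the missing observation is the crux.} You treat ``producing a nonempty open set $V$ on which $\tilde\vf\equiv0$'' as a dichotomy between (a) failure to cover and (b) uniform exhaustion. In fact there is no dichotomy. Since $D_n$ is a disc, the lift $\tilde D_n\ni\tilde x_0$ projects \emph{diffeomorphically} onto $D_n$. Consequently $\tilde D_n$ contains exactly one preimage of $x_0$, namely $\tilde x_0$. In particular, for any other preimage $\tilde x\ne\tilde x_0$ of $x_0$, the ball $B(\tilde x,\rho_0)$ is disjoint from $\tilde D_n$ for \emph{every} $n$, so $\tilde\vf_n\equiv0$ and hence $\tilde\vf\equiv0$ there. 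Since $\pi_1(S)$ is nontrivial such $\tilde x$ exists. Your case (b) simply never occurs; and as written its treatment is not rigorous anyway: ``uniform exhaustion'' does not yield $\|\tilde\vf\|_{L^2(\tilde S)}=1$ without a tightness argument, since the mass of $\tilde\vf_n^2$ could escape to infinity in $\tilde S$ even while the $\tilde D_n$ exhaust.

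\textbf{Gap 2: a nonnegative subsolution vanishing on an open set need not vanish identically.} Your limit object is established only as a weak subsolution, $\Delta\tilde\vf\le\lambda_0(\tilde S)\,\tilde\vf$, with equality on $U=\mathring{\liminf}\,\tilde D_n$. But the vanishing ball $V=B(\tilde x,\rho_0)$ is disjoint from $U$ (by Gap~1's observation), so unique continuation on $U$ says nothing about $V$. And the subsolution inequality itself does not support a strong minimum principle at zeros: already in one dimension, $u(x)=\max(0,|x|-1)$ is a nonnegative subharmonic function vanishing on an open interval without being identically zero. What is needed is that $\tilde\vf$ be a genuine eigenfunction on all of $\tilde S$. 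The paper gets this by establishing $\tilde\vf\in H^1_0(\tilde S)$ together with $R(\tilde\vf)\le\lambda_0(\tilde S)$ — this requires the careful exhaustion argument with the smooth approximation of the distance function and the selection of good annuli $K(k_m)$ where the $H^1$-mass of $\tilde\vf_n$ is small — after which the variational characterization forces $\tilde\vf$ to be a $\lambda_0(\tilde S)$-eigenfunction, and Aronszajn's unique continuation then applies directly. You flag these boundary-term controls as ``the technical crux'' but do not actually carry them out, and the subsolution route you sketch is not a valid substitute.
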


\begin{proof}
Suppose that there is a sequence of discs $D_n$ in $S$
with smooth boundary such that $\lambda_0(D_n)\to\lambda_0(\tilde S)$.
Let $\varphi_n\colon D_n\to\R$ be the positive $\lambda_0(D_n)$-Dirichlet eigenfunction
with $||\varphi_n||_2=1$.
By passing to a subsequence if necessary,
we may assume that $\lambda_0(D_n)\le\theta \lambda_0(S\setminus K_0)$
for some $0<\theta<1$.
By \cref{cheet} and up to passing to a subsequence,
there are positive constants $\ve_0$ and $\rho_0$
and a point $x_0\in S$ such that $B(x_0,2\rho_0)$
is contained in $\{\varphi_n^2\ge\ve_0\} \cap K$ for all $n$,
where $K=K_{i_1(\theta,1/2)}$.

Fix a point $\tilde x_0\in\tilde S$ above $x_0$.
Then there is a unique lift $\tilde D_n$ of $D_n$ to $\tilde S$ containing $\tilde x_0$
such that $\tilde D_n\to D_n$ is a diffeomorphism (including the boundary).
Thus we may also lift $\vf_n$ to $\tilde\vf_n$ on $\tilde D_n$ and extend $\tilde\vf_n$
to a function $\tilde\vf_n$ on $\tilde S_n$
by setting $\tilde\vf_n=0$ on $\tilde S\setminus\tilde D_n$.
Since the boundary of $\tilde D_n$ is smooth and $\tilde\vf_n$ is smooth on $\tilde D_n$,
it follows that $\tilde\vf_n\in H^1_0(\tilde S)$ with $H^1$-norm
\begin{equation*}
  ||\tilde\vf_n||_{H^1} = ||\varphi_n||_{H^1}^2 = \lambda_0(D_n) + 1,
\end{equation*}
where we use Green's formula for the second equality.
In particular, up to extracting a subsequence, we have weak convergence
\begin{equation*}
  \tilde\vf_n \rightharpoonup \tilde\vf\in H^1_0(\tilde S)
  \quad\text{with}\quad ||\tilde\vf||_{H^1} \le \liminf ||\varphi_n||_{H^1}.
\end{equation*}
Up to extracting a further subsequence,
the sequence of $\tilde\vf_n$ converges uniformly in any $C^k$-norm in $B(\tilde x_0,\rho_0)$,
by Theorem 8.10 in \cite{GT}.
In particular $\tilde\varphi^2\ge\ve_0$ on $B(\tilde x_0,\rho_0)$.

By Theorem 1 of \cite{AFLMR},
we may approximate the distance function $d_0$ to $\tilde x_0$ in $\tilde S$
by a smooth function $u$ on $\tilde S$ such that $|u-d_0|\le1$ and $|\nabla u|\le2$.
Then the sublevels $B(r)=\{u\le r\}$ form an exhaustion of $\tilde S$ by compact subsets.
Clearly,
\begin{equation*}
  R(\tilde\vf) = \lim_{r\to\infty} R(\tilde\vf|_{B(r)}).
\end{equation*}
Furthermore, up to passing to a subsequence, we have weak convergence
\begin{equation*}
  \tilde\vf_n|_{B(r)}\rightharpoonup\tilde\vf|_{B(r)} \quad\text{in $H^1(B(r))$}
\end{equation*}
and strong convergence
\begin{equation*}
  \tilde\vf_n|_{B(r)}\to\tilde\vf|_{B(r)} \quad\text{in $L^2(B(r))$.}
\end{equation*}
Hence
\begin{equation*}
  R(\tilde\vf|_{B(r)})\le\liminf R(\tilde\vf_n|_{B(r)}).
\end{equation*}
For any regular value $r$ of $u$ such that $\partial B(r)$ intersects $\partial \tilde D_n$ transversally we have
\begin{equation*}
\begin{split}
  \int_{B(r)}|\nabla\tilde\vf_n|^2
  &= \lambda_0(D_n) \int_{B(r)} |\tilde\vf_n|^2
  + \int_{\partial B(r)}\tilde\vf_n\la\nabla\tilde\vf_n,\nu\ra,
\end{split}
\end{equation*}
where $\nu=\nabla u/|\nabla u|$ is the outward unit vector field along $\partial B(r)$.
Clearly, the second term on the right satisfies
\begin{equation*}
  \int_{\partial B(r)}\tilde\vf_n\la\nabla\tilde\vf_n,\nu\ra
  \le \int_{\partial B(r)} (|\tilde\vf_n|^2 + |\nabla\tilde\vf_n|^2).
\end{equation*}
Let now $\ve>0$ be given.
Then there is a sequence of integers $k_m\to\infty$ such that,
for each $m$, there is a subsequence of $n\to\infty$ with
\begin{equation} \label{subseq}
  \int_{K(k_m)} (|\tilde\vf_n|^2 + |\nabla\tilde\vf_n|^2) < \ve,
\end{equation}
where $K(r)=B(r)\setminus B(r-1)$.
If this would not be the case, there would be some $\ve>0$ and a positive integer $m$
such that for all integers $k \geq m$ there is an integer $l$ such that, for all integers $n \geq l$,
\begin{equation} \label{contra}
\int_{K(k)} (|\tilde \varphi_n|^2 + |\nabla \tilde \varphi_n|^2) \geq \ve.
\end{equation}
We thus find, for any fixed $M$, an index $N$ such that \eqref{contra} holds
for $k=m, \dots, m+M$ and $n \geq N.$
If we choose $M$ such that $M \ve > \lambda_0(\tilde S)+1,$ we get a contradiction.

By the coarea formula, we have
\begin{equation*}
  \int_{K(k_m)} (|\tilde \varphi_n|^2 + |\nabla \tilde \varphi_n|^2)|\nabla u|
  = \int_{k_m-1}^{k_m} \int_{\partial B(r)} (|\tilde \varphi_n|^2 + |\nabla \tilde \varphi_n|^2).
\end{equation*}
Since $|\nabla u|\le2$, we then get, for $n$ as in \eqref{subseq},
that there is a regular value $r_n\in (k_m-1,k_m)$ of $u$ such that
\begin{equation*}
   \int_{\partial B(r_n)} (|\tilde\vf_n|^2 + |\nabla\tilde\vf_n|^2) < 2\ve,
\end{equation*}
where we may also assume that $\partial B(r_n)$ intersects $\partial \tilde D_n$ transversally.
We obtain
\begin{equation*}
\begin{split}
  R(\tilde\vf_n|_{B(k_m)})
  &\le \frac{\int_{B(r_n)} |\nabla\tilde\vf_n|^2}{\int_{B(k_m)} |\tilde\vf_n|^2}
  + \frac{\int_{K(k_m)} |\nabla\tilde\vf_n|^2}{\int_{B(k_m)}|\tilde\vf_n|^2}  \\
  &\le \frac{\int_{B(r_n)} |\nabla\tilde\vf_n|^2}{\int_{B(r_n)} |\tilde\vf_n|^2}
  + \frac{\ve}{\int_{B(k_m)}|\tilde\vf_n|^2}  \\
  &\le  \lambda_0(D_n) + \frac{
  \int_{\partial B(r_n)} \tilde\vf_n\la\nabla\tilde\vf_n,\nu\ra}{\int_{B(r_n)} |\tilde\vf_n|^2}
  + \frac{\ve}{\int_{B(k_m)}|\tilde\vf_n|^2}\\
  &\le \lambda_0(D_n) + \frac{2\ve}{\int_{B(r_n)}|\tilde\vf_n|^2}
  + \frac{\ve}{\int_{B(k_m)}|\tilde\vf_n|^2}.
\end{split}
\end{equation*}
It follows that
\begin{equation*}
  R(\tilde\vf|_{B(k_m)})
  \le \lambda_0(\tilde S) + \frac{3\ve}{\int_{B(\tilde x_0,\rho_0)}|\tilde\vf|^2}
\end{equation*}
for all sufficiently large $m$.
In conclusion,
\begin{equation*}
  R(\tilde\vf) \le \lambda_0(\tilde S).
\end{equation*}
Since $\tilde\vf\in H^1_0(\tilde S)$, this implies that $\tilde\vf$ is an eigenfunction
of the Laplacian with eigenvalue $\lambda_0(\tilde S)$.

Now $\tilde\vf$ is non-zero on $B(\tilde x_0,\rho_0)$.
On the other hand, by the definition of the lifts $\tilde\vf_n$,
$\tilde\vf$ vanishes on any other preimages $B(\tilde x,\rho_0)$ of $B(x_0,\rho_0)$
under the covering projection $\tilde S\to S$.
Now the fundamental group of $S$ is not trivial, hence there are such preimages $\tilde x\in\tilde S$.
Thus we arrive at a contradiction to the unique continuation property
for eigenfunctions of Laplacians \cite{Ar}.
\end{proof}

\begin{thm} \label{annuli}
If $S$ is a complete and connected Riemannian surface of finite
type with $\chi(S)<0$ and $\lambda_{\ess}(S)>\lambda_0(\tilde S)$,
then $\Lambda_A(S)>\lambda_0(\tilde{S})$.
\end{thm}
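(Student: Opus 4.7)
The plan is to mirror the strategy of \cref{discs}, arguing by contradiction but replacing the universal cover $\tilde S$ by a suitable cyclic cover. Suppose incompressible compact annuli $A_n \subseteq \mathring S$ with smooth boundary satisfy $\lambda_0(A_n) \to \lambda_0(\tilde S)$, and let $\vf_n$ denote the ground state of $A_n$. After passing to a subsequence we may assume $\lambda_0(A_n) \le \theta \lambda_0(S\setminus K_0)$ for some $\theta < 1$, so that \cref{cheet} yields (up to a further subsequence) constants $\ve_0, \rho_0 > 0$, an index $i_1$, and a point $x_0 \in K := K_{i_1}$ with $B(x_0, 2\rho_0) \subseteq \{\vf_n^2 \ge \ve_0\} \cap K$ for all $n$.

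The main new step, and the principal obstacle, is to extract a further subsequence along which the annuli $A_n$ are mutually isotopic in $S$. Since the isotopy class of an incompressible annulus is determined by the free homotopy class of its core curve, this amounts to the finiteness statement highlighted in the introduction: only finitely many free homotopy classes of essential simple closed curves in $S$ admit incompressible annular neighborhoods $A$ with $\lambda_0(A)$ sufficiently close to $\lambda_0(\tilde S)$. Here \cref{lambdad} enters in a crucial way: for any such annulus $A$ we have $\chi(A) = 0$, so every incompressible compact and connected subsurface of $\mathring A$ with positive Euler characteristic is an embedded disc in $\mathring S$, hence $\Lambda'(A) \ge \Lambda_D(S)$. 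By \cref{discs}, $\Lambda_D(S) > \lambda_0(\tilde S)$, giving a definite gap $\mu := \Lambda_D(S) - \lambda_0(\tilde S) > 0$. Applying \cref{lambdad} with this gap and optimizing in $\delta$ yields a uniform upper bound on $\ell(A_n)$, and the hypothesis $\lambda_{\ess}(S) > \lambda_0(\tilde S)$, which excludes ends with arbitrarily short essential loops, forces the core class of $A_n$ to admit a representative of bounded length in a fixed background hyperbolic metric $h$ on $S$. Since $(S,h)$ admits only finitely many free homotopy classes of closed curves of bounded length, after a further subsequence all cores of $A_n$ lie in a single class $[\gamma]$, with $\gamma \in \pi_1(S)$ primitive.

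Let $p \colon \hat S := \tilde S / \langle \gamma \rangle \to S$ be the associated cyclic cover. Since $\langle \gamma \rangle \cong \Z$ is amenable, \cite[Theorem 1]{Br2} gives $\lambda_0(\hat S) = \lambda_0(\tilde S)$. Fix a preimage $\hat x_0 \in \hat S$ of $x_0$; after standard basepoint adjustments we may arrange $\pi_1(A_n, x_0) = \langle \gamma \rangle$ inside $\pi_1(S, x_0)$, so that the lift $\hat A_n$ of $A_n$ through $\hat x_0$ is mapped diffeomorphically onto $A_n$ by $p$. Let $\hat \vf_n$ be the lift of $\vf_n$ to $\hat A_n$, extended by zero to $\hat S$; then $\hat \vf_n \in H^1_0(\hat S)$ with uniformly bounded $H^1$-norm. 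Up to a further subsequence $\hat \vf_n \rightharpoonup \hat \vf$ weakly in $H^1(\hat S)$ and strongly in $L^2$ on each ball about $\hat x_0$, and the lower bound $\hat \vf_n^2 \ge \ve_0$ on a fixed neighborhood of $\hat x_0$ ensures $\hat \vf \ne 0$. Running the boundary-term analysis of \cref{discs} on an exhaustion of $\hat S$ by balls around $\hat x_0$ gives $R(\hat \vf) \le \lambda_0(\tilde S) = \lambda_0(\hat S)$, so that $\hat \vf$ is a nonzero $\lambda_0(\hat S)$-eigenfunction. Finally, each $\hat A_n$ lies in the connected component of $p^{-1}(K)$ through $\hat x_0$, and since $[\pi_1(S):\langle \gamma \rangle] = \infty$ (as $\chi(S) < 0$), there is another preimage $\hat x_1 \in \hat S$ of $x_0$ in a different component, where every $\hat \vf_n$ vanishes; strong $L^2$-convergence forces $\hat \vf \equiv 0$ near $\hat x_1$, contradicting unique continuation \cite{Ar}.
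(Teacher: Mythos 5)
Your overall strategy mirrors the paper's: pass to finitely many isotopy types, lift to the appropriate cyclic cover $\hat S = \tilde S/\langle\gamma\rangle$, and invoke unique continuation. But there is a genuine gap in the covering-space step. You write that ``after standard basepoint adjustments we may arrange $\pi_1(A_n,x_0)=\langle\gamma\rangle$ inside $\pi_1(S,x_0)$, so that the lift $\hat A_n$ of $A_n$ through $\hat x_0$ is mapped diffeomorphically onto $A_n$.'' This cannot be arranged uniformly in $n$: the subgroups $\pi_1(A_n,x_0)\le\pi_1(S,x_0)$ are only \emph{conjugate} to $\langle\gamma\rangle$, by conjugating elements $g_n$ that may vary with $n$, and any basepoint change that brings $\pi_1(A_n,x_0)$ into $\langle\gamma\rangle$ also changes the fiber identification, so it does not pin down a fixed preimage of $x_0$ lying in every compact lift $\hat A_n$. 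The correct statement is only that each $\hat A_n$ (the unique compact component of $p^{-1}(A_n)$) contains \emph{some} preimage $\hat x_n$ of $x_0$, and nothing prevents $\hat x_n\to\infty$ in $\hat S$. In that case your weak limit $\hat\vf$ would simply be zero, the lower bound $\hat\vf_n^2\ge\ve_0$ on a fixed ball about $\hat x_0$ evaporates, and the unique-continuation contradiction never gets off the ground. The paper's proof explicitly splits into the two cases ``$\hat x_n$ bounded'' and ``$\hat x_n\to\infty$,'' and the second case requires a separate, substantial argument: using the uniform $h$-length bound to confine the boundary curves of $\hat A_n$ to a strip $\{|r|\le r_0\}$ around $\hat c$, then building a cutoff $\chi$ supported in $\{|r|\ge r_1\}$ and showing that $\chi\eta\hat\vf_n$ would have Rayleigh quotient below $\Lambda_D(S)$ while being supported on discs, a contradiction. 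None of this appears in your sketch, so the case $\hat x_n\to\infty$ is simply unaddressed.

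A secondary issue is the finiteness-of-isotopy-types step. You propose to feed $A_n$ into \cref{lambdad} to bound $\ell(A_n)$, but that lemma's conclusion involves the factor $|F|/\ell$, and $|A_n|$ need not be bounded: the annuli $A_n$ can protrude far into the ends of $S$. The paper instead applies the argument to $A_n'=A_n\cap K_{i_1+1}$, which is automatically contained in a fixed compact set and hence has area bounded by $|K_{i_1+1}|$, and shows $\lambda_0(A_n')\le\lambda_0(A_n)+\delta$ via the cutoff $\eta_{i_1}\vf_n$. You should make this reduction explicit; as written, the bound on $\ell$ does not follow. Your appeal to $\lambda_{\ess}(S)>\lambda_0(\tilde S)$ as ``excluding ends with arbitrarily short essential loops'' is also not quite what is doing the work; what matters is that once the relevant curves lie in a fixed compact set, the $g$- and $h$-lengths are comparable and a hyperbolic surface has only finitely many free homotopy classes below a given $h$-length.
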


To prove \cref{annuli},
we assume the contrary and let $(A_n)$ be a sequence of incompressible annuli in $S$
with smooth boundary such that $\lambda_0(A_n)\to\lambda_0(\tilde S)$.
We may assume that
\begin{equation}\label{pre2}
  \lambda_0(A_n) + 4\delta < \min(\Lambda_D(S),\theta\lambda_0(S\setminus K_0))
\end{equation}
for all $n$ and some fixed constants $\delta,\theta\in(0,1)$,
by invoking \cref{discs} and that
$\lambda_0(\tilde S)<\lambda_0(S\setminus K_0)$
by the choice of $K_0$ in \eqref{exhau2}.
By deforming the $A_n$ (slightly), we may also assume that
\begin{equation}\label{pre4}
  \text{$\partial A_n$ and $\partial K_i$ intersect transversally}
\end{equation}
for all $n$ and $i$.
Then the intersections $A_n\cap K_i$ and $A_n\cap(S\setminus\mathring K_i)$
are incompressible subsurfaces of $S$ with piecewise smooth boundary.

Recall the constant $ i_1(\theta,\delta)$ from \cref{cheer}.

\begin{lem}\label{isotyp}
By passing to a subsequence, we may assume that \\
\begin{inparaenum}[1)]
\item\label{p2}
all $A_n$ are isotopic in $S$ and, for each $i>i_1=i_1(\theta,\delta)$,
exactly one component $A_n'$ of $A_n\cap K_i$ is an annulus
(topologically) isotopic to $A_n$; \\
\item\label{p4}
there is a constant $\ell_0>0$ such that the free homotopy classes
of the boundary curves of $A_n$ in $A_n$ contain curves of length at most $\ell_0$
with respect to $g$ and $h$; \\
\item\label{p6}
there are $x_0\in S$ and $\rho,\ve>0$ such that $B(x_0,\rho)\subseteq A_n$
and such that the ground states $\vf_n$ of $A_n$
satisfy $\varphi_n\ge\ve$ on $B(x_0,\rho)$.
\end{inparaenum}
\end{lem}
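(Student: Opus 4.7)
My plan is to establish parts (3), (2), and (1) in that order, using \cref{cheer}, \cref{lambdad}, and \cref{comploc}, together with the hypothesis \eqref{pre2}.

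For part (3), I apply \cref{cheer} to each $F = A_n$ with $\theta$ from \eqref{pre2} and some fixed $\delta' \in (0,1/2)$; I take $i = i_1(\theta,\delta')$ and fix $t_0 > 0$ small enough that $t_0|K_i| < (1-\delta')/2$. The lemma gives a uniform positive lower bound $\rho_1$ on the inradius of $\{\vf_n^2 \ge t_0\} \cap K_i$. Letting $x_n \in K_i$ realize this inradius, compactness of $K_i$ allows me to pass to a subsequence with $x_n \to x_0$. Then setting $\rho = \rho_1/2$ and $\ve = \sqrt{t_0}$, one has $B(x_0,\rho) \subset B(x_n,\rho_1) \subset A_n$ and $\vf_n \ge \ve$ on $B(x_0,\rho)$ for all large $n$.

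For parts (1) and (2), I first observe that every component of $A_n \cap K_i$ is incompressible in $A_n$, so since $\pi_1(A_n) = \Z$ it is either a disc or an annulus isotopic to $A_n$ in $A_n$. I then claim that for $i$ large enough, at least one annular component exists. If, to the contrary, every component were a disc $D$, then $\eta_{i-1}\vf_n$ would vanish on $\partial D$ (since $\vf_n = 0$ on $\partial A_n$ and $\eta_{i-1} = 0$ on $\partial K_i$), and summing $\lambda_0(D) \ge \Lambda_D(S)$ over disc components together with the Cheeger-type estimate $\int|\nabla(\eta_{i-1}\vf_n)|^2 \le \lambda_0(A_n) + 1/(i-1)$ from the proof of \cref{cheer} and the mass concentration $\int(\eta_{i-1}\vf_n)^2 \ge 1 - \varepsilon$ from \cref{comploc} would give $\Lambda_D(S)(1-\varepsilon) \le \lambda_0(A_n) + 1/(i-1)$, contradicting \eqref{pre2} for small $\varepsilon$ and large $i$.

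A pigeonhole argument on the componentwise Rayleigh quotients of $\eta_{i-1}\vf_n$ then produces an annular component $A_n'$ with $\lambda_0(A_n') \le \lambda_0(A_n) + \delta_1$ for arbitrarily small $\delta_1 > 0$ (since the disc components contribute quotient $\ge \Lambda_D(S)$ while the overall weighted average is $\le \lambda_0(A_n) + 1/(i-1)$). I then apply \cref{lambdad} to $A_n'$, after a small smoothing of its piecewise smooth boundary: $\chi(A_n') = 0$, $|A_n'| \le |K_i|$, $\Lambda'(A_n') \ge \Lambda_D(S)$, and the free parameter may be chosen so that $(1-\delta)\Lambda_D(S) - \lambda_0(A_n') \ge \delta_0 > 0$ uniformly in $n$ by \eqref{pre2} and smallness of $\delta_1$. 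The lemma produces a uniform upper bound $\ell_0$ on the sum of $g$-lengths of shortest loops in the free homotopy classes (in $A_n'$) of the boundary circles of $A_n'$. These shortest loops lie in $A_n' \subset K_i$ and represent the core class of $A_n$, which yields (2) for the $g$-metric; the bound for $h$ follows from the comparability \eqref{normc} on $K_i$.

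For the isotopy assertion in part (1), the $h$-length bound combined with containment in $K_i$ restricts the conjugacy class of the core generator of $\pi_1(S)$ to finitely many possibilities, by proper discontinuity of the $\pi_1(S)$-action on $(\tilde S, \tilde h)$. Passing to a further subsequence, all $A_n$ become isotopic in $S$. For the uniqueness statement, two nested annular components of $A_n \cap K_i$ both isotopic to $A_n$ would cobound a subannulus of $A_n$ lying entirely in a cylindrical end component of $S \setminus K_i$; for $i$ sufficiently large this is incompatible with the bound $\ell_0$ on a core loop confined to $K_i$. The main obstacle is the pigeonhole step above, which requires careful control of Rayleigh quotients across the disc/annulus decomposition of $A_n \cap K_i$ to make \cref{lambdad} yield a length bound genuinely uniform in $n$.
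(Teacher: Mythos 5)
Your proposal follows the paper's proof very closely: part (3) via the inradius estimate of \cref{cheer} and compactness of $K_i$; ruling out the ``all-discs'' case by comparing $R(\eta_{i-1}\vf_n)$ with $\Lambda_D(S)$ via \eqref{pre2}; isolating the annular component $A_n'$ with $\lambda_0(A_n')\le\lambda_0(A_n)+\delta$ by a componentwise Rayleigh-quotient comparison; applying \cref{lambdad} to $A_n'$ (with $|A_n'|\le|K_i|$ and $\Lambda'(A_n')\ge\Lambda_D(S)$) to get a uniform length bound $\ell_0$; and deducing finiteness of isotopy types. That is exactly the paper's chain of reasoning, and your observation that \cref{lambdad} requires a smoothing of $\partial A_n'$ is a fair point the paper leaves implicit.

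The one step that does not work as stated is your justification for the \emph{uniqueness} of the annular component. You claim two annular components of $A_n\cap K_i$ would cobound a subannulus $B$ of $A_n$ lying in a single cylindrical end, and that this contradicts the bound $\ell_0$. The length bound plays no role here: a short core loop in $K_i$ coexists perfectly well with a long parallel loop deep in a funnel, since they represent the same free homotopy class. The correct reason is purely topological. A circle component of $A_n\cap\partial K_i$ that is essential in $A_n$ must be an \emph{entire} boundary circle of $K_i$ (it is a circle contained in a circle). The two boundary circles of $B$ are therefore distinct circles of $\partial K_i$, hence bound \emph{distinct} cylindrical ends. But $B\setminus(\text{inessential-circle discs})$ is connected and has interior disjoint from $\partial K_i$, so it lies entirely in a single component of $S\setminus\partial K_i$; matching this against the two boundary circles forces the two ends to coincide, a contradiction. (The paper compresses this into ``by the Schoenflies theorem and the topology of $S$,'' which is vague but correct; your argument substitutes an irrelevant metric bound for what is really a statement about $\partial K_i$ being a disjoint union of circles each separating off a single end.) Since the uniqueness of $A_n'$ is used explicitly later in the proof of \cref{annuli}, this is worth fixing, though it is a local repair rather than a change of strategy.
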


\begin{proof}
The connected components of $A_n\cap\partial K_i$ consist of
embedded segments connecting two boundary points of $A_n$
and of embedded circles in the interior of $A_n$.
By the Schoenflies theorem and the topology of $S$,
there are the following two possibilities: \\
\begin{inparaenum}[a)]
\item\label{ca}
All connected components of $A_n\cap K_i$ are discs. \\
\item\label{cb}
The connected components of $A_n\cap K_i$ consist of one annulus $A_n'$
(topologically) isotopic to $A_n$ and discs.\\
\end{inparaenum}
Now let $i>i_1=i_1(\theta,\delta)$.
Then $\eta_{i-1}\vf_n$ is a non-zero smooth function on $A_n$ with compact support
in $A_n\cap K_{i}$ and Rayleigh quotient
\begin{equation*}
  R(\eta_{i-1}\vf_n) < \lambda_0(A_n) + \delta < \Lambda_D(S),
\end{equation*}
by \eqref{coray} and \eqref{pre2}.
Hence, if all the connected components of $A_n\cap K_i$ were discs,
we would have $R(\eta_{i-1}\vf_n)\ge\Lambda_D(S)$.
Therefore only case \ref{cb}) can occur.
Then the Rayleigh quotients of $\eta_{i-1}\vf_n$ on the discs of $A_n\cap K_i$
(on which $\eta_{i-1}\vf_n$ does not vanish) must be at least $\Lambda_D(S)$.
Hence the Rayleigh quotient of $\eta_{i-1}\vf_n$ on $A_n'$
must be less than $ \lambda_0(A_n) + \delta$.
In particular, $\lambda_0(A_n')\le\lambda_0(A_n)+\delta$.

Note that one of the boundary circles of $A_n'$ may only be piecewise smooth.
Therefore $A_n'$ may only be topologically isotopic to $A_n$.

Now we let $i=i_1+1$.
Since $A_n'\subseteq K_i$, we have the uniform area bound $|A_n'|\le|K_i|$.
This together with the above estimate on $\lambda_0(A_n')$ and \cref{lambdad}
implies that the length of shortest curves in $A_n'$,
which are freely homotopic to the boundary circles of $A_n'$ in $A_n'$,
is uniformly bounded with respect to $g$.
Then their length is also uniformly bounded with respect to $h$, by \eqref{normc}.
In particular, there are only finitely many isotopy types of $A_n'$
and, therefore also, of $A_n$.
Therefore we may pass to a subsequence so that all of them are isotopic.

By \cref{cheet} and since $K_i$ is compact,
we may pass to a further subsequence so that all $A_n \cap K_i$, and hence also all $A_n$,
contain a geodesic ball $B(x_0,\rho)$
such that $\varphi_n\ge\ve$ on $B(x_0,\rho)$ as claimed.
\end{proof}

\begin{proof}[Proof of \cref{annuli}]
By passing to a subsequence,
we may assume that the sequence of $A_n$ satisfies all the properties
from \eqref{pre2}, \eqref{pre4}, and \cref{isotyp}.

Choose a shortest (with respect to $h$) closed $h$-geodesic $c$
in the free homotopy class in $S$ of a generator of the fundamental group of $A_n$.
This is possible since the ends of $S$ are hyperbolic funnels with respect to $h$.
Note that $c$ does not depend on $n$ since all $A_n$ are isotopic.
We let $\hat S$ be the cyclic subcover of $\tilde S$ to which $c$ lifts
as a closed $\hat h$-geodesic $\hat c$,
where $\hat h$ denotes the lift of $h$ to $\hat S$.
Note that all annuli $A_n$ are isotopic to a small tubular neighbourhood of $c$.
Lifting the corresponding isotopies, we get lifts $\hat A_n\subseteq\hat S$ of the annuli $A_n$.
Note that $\hat A_n$ is the unique compact component of $\pi^{-1}(A_n)$
and that $\pi\colon\hat A_n\to A_n$ is a diffeomorphism
which is isometric with respect to $g$ and $h$ and their respective lifts $\hat g$ and $\hat h$.

Denote by $\hat x_n$ the lift of $x_0$ which is contained in $\hat A_n$.
If $\hat x_n$ stays at bounded distance to $\hat c$,
the arguments for the case of discs in the proof of \cref{discs} apply again
and lead to a contradiction since the fundamental group of $S$ is not cyclic
and $\lambda_0(\hat S)=\lambda_0(\tilde S)$.

Suppose now that $\hat x_n\to\infty$ in $\hat S$.
Let $(r,\theta) \colon \hat S \to \R\times(\R/\ell\Z)$ be Fermi coordinates
about $\hat c$,
where $\ell$ denotes the $h$-length of $c$ and $\hat c$, such that $\hat c=\{r=0\}$.
Then we have
\begin{equation*}
  \hat h = dr^2+\cosh^2(r)d\theta^2.
\end{equation*}
Since the $h$-length of shortest curves, $c_n$, in the free homotopy class
of the boundary curves of $A_n$ in $A_n$ is bounded by $\ell_0$,
there is a constant $r_0>0$ such that the lifts $\hat c_n$ of $c_n$ to $\hat A_n$
are contained in the region $\{|r|\le r_0\}$ of $\hat S$.

\begin{center}
\includegraphics[width=12cm]{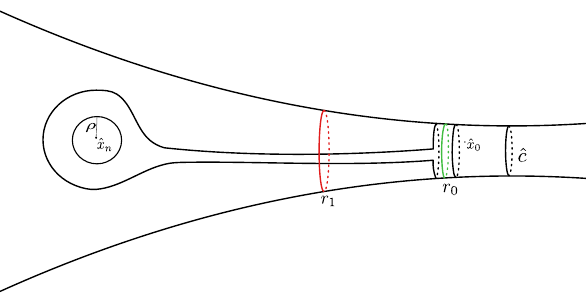}
\ \ \ \ The case $\hat x_n \to \infty$
\end{center}

Let $\hat\vf_n$ be the lift of $\vf_n$ to $\hat A_n$.
Then $\hat\vf_n$ is the ground state of $\hat A_n$ with respect to $\hat g$
and we have
\begin{equation}\label{lowerl2}
  \int_{B(\hat x_n,\rho)}\hat\vf_n^2
  = \int_{B(x_0,\rho)} \vf_n^2
  \ge \ve^2\vol B(x_0,\rho),
\end{equation}
by \cref{isotyp}.\ref{p6}.
Choose $j>i_1=i_1(\theta,\delta)$ such that
\begin{equation}\label{nabet}
  |\nabla\eta_j|^2 < \delta\ve^2\vol B(x_0,\rho).
\end{equation}
Since $\supp\eta_j\subseteq K_{j+1}$, the $h$-area of $\supp\eta_j$
is bounded by the $h$-area $|K_{j+1}|_h$ of $K_{j+1}$.
Now choose an $r_1>r_0$ such that the $\hat h$-area of either of the regions $-r_1\le r\le-r_0$
and $r_0\le r\le r_1$ in $\hat S$ is larger than $|K_{j+1}|_h$.
Finally, choose a cut off function $\chi$ on $\hat S$ such that $\chi=0$ on $\{|r|\le r_1\}$,
$\chi=1$ on $\{|r|\ge r_2\}$ for some $r_2>r_1$, and such that
\begin{equation}\label{nabch}
  |\nabla\chi|^2 < \delta\ve^2\vol B(x_0,\rho).
\end{equation}
Computing as in the proofs of \cref{comploc} and \cref{cheer} and with $\eta=\eta_j\circ\pi$, we get
\begin{equation} \label{upperh1_1}
	\begin{split}
  \int_{\hat A_n } | \nabla(\chi\eta\hat\varphi_n)|^2
  &= \int_{\hat A_n} \nabla(\chi^2\eta^2\hat\vf_n)\nabla\hat\vf_n
  + \int_{\hat A_n} \hat\vf_n^2 |\nabla(\chi\eta)|^2
  \\
  &= \lambda_0(A_n) \int_{\hat A_n} \chi^2\eta^2\hat\vf_n^2
  +  \int_{\hat A_n} \hat\vf_n^2 |\nabla(\chi\eta)|^2
  \\
  &\le \lambda_0(A_n) \int_{\hat A_n} \chi^2\eta^2\hat\vf_n^2
  + 4\delta\ve^2\vol B(x_0,\rho),
  	\end{split}
\end{equation}
where we use \eqref{nabet} and \eqref{nabch} for the passage to the last line.

Since $\hat x_n\to\infty$ and $B(x_0,\rho)\subseteq K_j$ by the choice of $j$,
$\chi=\eta=1$ on $B(\hat x_n,\rho)$ for all sufficiently large $n$.
Combining \eqref{lowerl2} and \eqref{upperh1_1}, we then get
\begin{equation} \label{rayl_est}
  R(\chi\eta\hat\vf_n) \le  \lambda_0(A_n) + 4\delta < \Lambda_D(S).
\end{equation}
On the other hand, $\supp(\chi\eta\hat\vf_n)$ is contained in the lift $B_n$
of $A_n\cap K_{j+1}$ to $\hat A_n$, intersected with $\{|r|\ge r_1\}$.
Now the $h$-area of $B_n$ is bounded by $|K_{j+1}|_h$
and $B_n$ contains $c_n$.
Hence $B_n$ does not contain loops freely homotopic to $c_n$
in the region $\{|r|\ge r_1\}$ of $\hat S$
since otherwise, by the uinqueness of $A_n'$,
it would contain one of the regions  $-r_1\le r\le-r_0$ or $r_0\le r\le r_1$.
Hence $B_n\cap\{|r|\ge r_1\}$ is a union of discs and, therefore,
the Rayleigh quotient of $\chi\eta\hat\vf_n$ has to be at least $\Lambda_D(S)$,
a contradiction to \eqref{rayl_est}.
It follows that the sequence of $\hat x_n$ is bounded.
\end{proof}

\begin{proof}[Proof of \cref{main3} in the case $\chi(S)<0$]
By Theorems \ref{discs} and \ref{annuli}, we have
\[ \min\big(\Lambda_D(S),\Lambda_A(S)\big) > \lambda_0(\tilde S) \]
for any complete and connected Riemannian surface of finite type with $\chi(S)<0$.
This implies the assertion of \cref{main3} in the case where $S$ is orientable
since then $S$ does not contain cross caps.

Assume now that $S$ is not orientable
and let $\Or(S)$ be the orientation covering space of $S$.
Let $C$ be a cross cap in $S$.
Then the lift of $C$ to $\Or(S)$ is an annulus $A$ in $\Or(S)$ with
\[ \lambda_0(C) \ge \lambda_0(A) \ge \Lambda_A(\Or(S)). \]
We conclude that $\Lambda_C(S)\ge\Lambda_A(\Or(S))>\lambda_0(\tilde S)$ as asserted.
\end{proof}

\section{Remarks, examples, and questions} \label{secr}

In this section, we collect some loose ends.
We start with a comment which gives another argument for calling $\Lambda$
the analytic systole.

\subsection{On the definition of $\Lambda$} \label{sused}
For complete and connected surfaces $S$ with infinite fundamental group,
an equivalent definition of the analytic systole is $\Lambda(S)=\inf_F \lambda_0(F)$,
where the infimum is taken over incompressible annuli and cross caps $F$
with smooth boundary in $S$: \\
\begin{inparaenum}[a)]
\item
For any disc $D$ with smooth boundary and any free homotopy class $[c]$ of closed curves in $S$,
there is an annulus $A$ with smooth boundary in $S$ containing $D$ whose soul belongs to $[c]$.
If $[c]$ is non-trivial, then $A$ is incompressible.
Moreover, $\lambda_0(A)\le\lambda_0(D)$ by the domain monotonicity of $\lambda_0$. \\
\item
For any compressible annulus $A$ in $S$ with smooth boundary,
there is a disc $D$ in $S$ whose boundary $\partial D$ is one of the boundary circles of $A$
such that $A\cup D$ is a disc in $S$ with smooth boundary, by the Schoenflies theorem,
and then a) applies. \\
\item
Cross caps only occur in the case where $S$ is not orientable.
The soul of a cross cap $C$ in $S$ is not homotopic to zero in $S$.
Since the fundamental group of $S$ is torsion free,
we get that $C$ is incompressible.
\end{inparaenum}

However, in view of our previous articles \cite{BMM, BMM1},
it is more natural to include discs into the definition.
Moreover, it is important in our analysis to handle the case of discs separately.

\subsection{On the essential spectrum}\label{susess}
The following result, Proposition 3.6 from \cite{BMM1} formulated for surfaces,
is probably folklore.
It shows that the essential spectrum of the Laplacian
only depends on the geometry of the underlying surface $S$ at infinity
and that the essential spectrum of the Laplacian is empty if $S$ is compact.

\begin{prop}\label{esp}
For a complete Riemannian surface $S$ with compact boundary (possibly empty),
$\lambda\in\R$ belongs to the essential spectrum of $\Delta$
if and only if there is a \emph{Weyl sequence} for $\lambda$, that is,
a sequence $\vf_n$ of smooth functions on $S$ with compact support such that \\
\begin{inparaenum}[1)]
\item
for any compact $K\subseteq S$,
$\supp\vf_n\cap K=\emptyset$ for all sufficiently large $n$; \\
\item
$\limsup_{n\rightarrow\infty}\|\vf_n\|_2>0$
and $\lim_{n\rightarrow\infty}\|\Delta\vf_n-\lambda\vf_n\|_2=0$.
\end{inparaenum}
\end{prop}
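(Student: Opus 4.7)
The backward direction is quick. Given $(\vf_n)$ satisfying the two listed conditions, pass to a subsequence and normalize so that $\|\vf_n\|_2 = 1$. The support condition forces $\vf_n \rightharpoonup 0$ weakly in $L^2(S)$, since $\la \vf_n, \eta\ra$ vanishes eventually for every $\eta \in C_c^\infty(S)$, and such $\eta$ are dense in $L^2(S)$. Hence $(\vf_n)$ has no strongly $L^2$-convergent subsequence (any strong limit would be $0$, contradicting $\|\vf_n\|_2 = 1$) while $\|(\Delta - \lambda)\vf_n\|_2 \to 0$. This is precisely a singular sequence in the sense of the abstract Weyl criterion for the self-adjoint operator $\Delta$ on $L^2(S)$, so $\lambda$ belongs to the essential spectrum.

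For the forward direction, the abstract Weyl criterion provides a sequence $(\psi_n)$ in the domain of $\Delta$ with $\|\psi_n\|_2 = 1$, $\psi_n \rightharpoonup 0$ in $L^2(S)$, and $\|(\Delta - \lambda)\psi_n\|_2 \to 0$. The plan is to transform $(\psi_n)$ into a sequence of smooth compactly supported functions whose supports escape every compact subset of $S$. Fix an exhaustion $K_1 \subset K_2 \subset \cdots$ of $S$ by compact subsurfaces with smooth boundary. By interior elliptic regularity, $(\psi_n)$ is bounded in $H^2_{\mathrm{loc}}(S)$, and combining the weak $L^2$-convergence with Rellich's compact embedding and a Caccioppoli-type estimate (which upgrades strong $L^2_{\mathrm{loc}}$ to strong $H^1_{\mathrm{loc}}$ convergence, using the uniform $L^2$-bound on $(\Delta - \lambda)\psi_n$) yields, after a diagonal extraction, $\psi_n \to 0$ strongly in $H^1(K_m)$ for every $m$.

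Now introduce smooth cutoffs $\chi_m \colon S \to [0,1]$ equal to $0$ on a neighbourhood of $K_m$ and to $1$ outside $K_{m+1}$, with $|\nabla \chi_m|$ and $|\Delta \chi_m|$ uniformly bounded in $m$. From the identity
\[
  (\Delta - \lambda)(\chi_m \psi_n) = \chi_m (\Delta - \lambda)\psi_n + (\Delta \chi_m)\psi_n - 2\la \nabla \chi_m, \nabla \psi_n\ra,
\]
the first term has $L^2$-norm at most $\|(\Delta - \lambda)\psi_n\|_2 \to 0$, while the last two terms are supported in $K_{m+1}\setminus K_m$ and bounded by the $H^1$-norm of $\psi_n$ on that set. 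A diagonal choice $\vf_n := \chi_{m(n)}\psi_n$ with $m(n) \to \infty$ sufficiently slowly therefore satisfies $\|\vf_n\|_2 \to 1$, $\|(\Delta - \lambda)\vf_n\|_2 \to 0$, and $\supp \vf_n \cap K_{m(n)} = \emptyset$. A standard mollification finally replaces each $\vf_n$ by a smooth compactly supported function retaining the same asymptotic properties. The main obstacle is the strong local convergence step, which relies on the combination of local elliptic regularity and Rellich; since $S$ is complete with only compact boundary, the standard tools apply without complication.
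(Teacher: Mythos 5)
The paper does not actually prove this proposition; it quotes it as Proposition 3.6 from \cite{BMM1} and calls it ``probably folklore,'' pointing to Persson's theorem and \cite[Section 14.4]{HS}. So your argument must stand on its own. Your backward direction is correct: for large $n$ the supports of $\vf_n$ avoid the compact set $\partial S$, so the $\vf_n$ lie in $C^\infty_c(\mathring S)\subseteq D(\Delta)$, and a normalized subsequence is a singular sequence, which by the abstract Weyl criterion puts $\lambda$ in the essential spectrum. The overall shape of the forward direction (Weyl criterion, elliptic estimates and Rellich to get strong $H^1_{\mathrm{loc}}$ decay of the singular sequence, a tail cutoff $\chi_m\psi_n$ and a diagonal choice of $m(n)$) is also sound, and in fact the claimed \emph{uniform} bounds on $|\nabla\chi_m|$ and $|\Delta\chi_m|$ are unnecessary — for each fixed $m$ the error terms are supported in the fixed compact set $\overline{K_{m+1}\setminus K_m}$, where $\chi_m$ has some finite $C^2$ bound, and that is all the diagonal argument uses.

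However, the last sentence — ``a standard mollification finally replaces each $\vf_n$ by a smooth compactly supported function retaining the same asymptotic properties'' — is a genuine gap. Your tail cutoff $\chi_m$ equals $1$ outside $K_{m+1}$, so $\chi_m\psi_n$ is not compactly supported when $\psi_n$ isn't, and mollification does not repair this: mollification improves local regularity but neither creates compact support nor provides global $L^2$ control of $\Delta(\cdot)$. To truncate at infinity you need an outer cutoff $\eta_R$, and then the error term $(\Delta\eta_R)\vf + 2\la\nabla\eta_R,\nabla\vf\ra$ involves $\Delta\eta_R$, a second-order quantity on the annulus $K_{R+1}\setminus K_R$ that is not bounded uniformly in $R$ without a bounded-geometry hypothesis, so the naive ``pick $R$ large'' argument does not close. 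The standard ways to fix this are: (i) take the singular sequence $\psi_n$ directly from a core of $C^\infty_c$ functions — this is immediate on a complete manifold without boundary by Gaffney's essential self-adjointness theorem, and in the present situation with compact boundary one can cap off $\partial S$ by a compact piece and invoke the decomposition principle to reduce to the boundaryless case; or (ii) establish directly that smooth compactly supported functions vanishing near $\partial S$ are a core for the Dirichlet Laplacian (not automatic, since $\Delta$ on $C^\infty_c(\mathring S)$ is not essentially self-adjoint when $\partial S\ne\emptyset$). Either route requires an explicit statement and a citation; without one, the compact-support claim at the end of your proof is unjustified.
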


In work of Arne Persson and of Richard Froese and Peter Hislop,
the bottom of the essential spectrum of Laplacians and more general operators
has been characterized in the sense of \cref{esp} or, more specifically,
in the sense of \eqref{boess}; compare with \cite[Section 14.4]{HS}.

\begin{cor}\label{espb}
For a complete Riemannian surface $S$, we have
\begin{equation*}
  \lambda_{\ess}(S) = \lim_K\lambda_0(S\setminus K),
\end{equation*}
where $K$ runs over the compact subsets of $S$, ordered by inclusion.
\end{cor}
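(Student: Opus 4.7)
The plan is to prove both inequalities $L \le \lambda_{\ess}(S)$ and $\lambda_{\ess}(S) \le L$, where $L := \lim_K \lambda_0(S\setminus K)$, by invoking \cref{esp} together with the min-max principle. The result is essentially Persson's theorem, and both directions rest on the Weyl-sequence characterization combined with a variational argument.

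For $L \le \lambda_{\ess}(S)$, I would fix $\lambda \in \sigma_{\ess}(\Delta)$ and an arbitrary compact $K \subseteq S$. By \cref{esp}, there is a Weyl sequence $(\vf_n)$ for $\lambda$; after passing to a subsequence we may assume $\|\vf_n\|_2 \ge c > 0$. Since $\supp\vf_n \cap K = \emptyset$ for all $n$ sufficiently large, each such $\vf_n$ is a valid test function for $\lambda_0(S\setminus K)$. Green's formula gives
\[
  R(\vf_n)
  \;=\; \frac{\int \vf_n\,\Delta\vf_n}{\int \vf_n^2}
  \;=\; \lambda + \frac{\int \vf_n(\Delta\vf_n - \lambda\vf_n)}{\int \vf_n^2},
\]
and the Cauchy--Schwarz inequality bounds the error by $\|\Delta\vf_n - \lambda\vf_n\|_2/\|\vf_n\|_2 \to 0$. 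Hence $\lambda_0(S\setminus K) \le \lim_n R(\vf_n) = \lambda$ for every compact $K$; taking the monotone limit in $K$ and then the infimum over $\lambda \in \sigma_{\ess}(\Delta)$ yields $L \le \lambda_{\ess}(S)$.

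For the reverse inequality, I would fix $\varepsilon > 0$ and an exhaustion $K_0 \subseteq K_1 \subseteq \cdots$ of $S$ by compact sets, so that $\lambda_0(S\setminus K_j) \nearrow L$. The key construction is inductive: having chosen smooth compactly supported $\vf_1, \ldots, \vf_{k-1}$ on $S$, pick $j_k$ so that $K_{j_k}$ contains all of their supports; since $\lambda_0(S\setminus K_{j_k}) \le L$, the definition of $\lambda_0$ produces a smooth compactly supported $\vf_k$ in $S\setminus K_{j_k}$ with Rayleigh quotient $R(\vf_k) < L + \varepsilon$. The $\vf_k$ then have pairwise disjoint supports, so for any $\vf = \sum_{k=1}^N c_k\vf_k$ in their span one has $R(\vf) \le L + \varepsilon$. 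The min-max principle therefore yields $\lambda_N(\Delta) \le L + \varepsilon$ for every $N$; since $\lambda_N \nearrow \lambda_{\ess}(S)$ for semi-bounded self-adjoint operators (a standard consequence of the spectral theorem), letting $N \to \infty$ and then $\varepsilon \to 0$ gives $\lambda_{\ess}(S) \le L$.

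The main obstacle is this second inequality. Although the test functions $\vf_k$ constructed above have Rayleigh quotients tending to $L$, one cannot in general extract from them a Weyl sequence for $L$, because $R(\vf_k) \to L$ does not force $\|\Delta\vf_k - L\vf_k\|_2 \to 0$ --- the spectral measures of the $\vf_k$ could spread out rather than concentrate at $L$. The min-max principle neatly sidesteps this difficulty by converting infinitely many almost-orthogonal quasimodes of energy at most $L + \varepsilon$ directly into the bound $\lambda_{\ess}(S) \le L + \varepsilon$.
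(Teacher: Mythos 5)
Your proof is correct. Note that the paper does not actually supply an argument for \cref{espb}: it is stated as a corollary of \cref{esp} (itself quoted from [BMM1]) and the reader is referred to Persson, Froese--Hislop, and [HS, Section 14.4]. Your two-sided argument is precisely the standard route taken in those sources, so there is no genuine divergence to compare. A few remarks. In the easy direction, you correctly use that a Weyl sequence eventually has support disjoint from any fixed compact $K$ and hence furnishes admissible test functions for $\lambda_0(S\setminus K)$; the estimate $R(\vf_n)\to\lambda$ via Green's formula and Cauchy--Schwarz (with $\|\vf_n\|_2\ge c>0$ after passing to a subsequence) is exactly right. In the hard direction, your inductive construction of disjointly supported quasimodes with $R(\vf_k)<L+\ve$ and the appeal to min--max (using $\lambda_N\nearrow\inf\sigma_{\ess}$) is a clean way to avoid the trap you identified, namely that small Rayleigh quotient alone does not make $\|\Delta\vf_k-L\vf_k\|_2$ small. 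For completeness: the more common Persson-style variant of this step argues by contradiction, assuming $\inf\sigma_{\ess}>L+2\ve$, noting that the spectral projection $E([0,L+2\ve])$ is then finite-rank and hence compact, so that $E([0,L+2\ve])\vf_k\to0$ since $\vf_k\rightharpoonup0$; this forces $R(\vf_k)\ge(L+2\ve)(1-o(1))$, contradicting $R(\vf_k)<L+\ve$. Either version is fine; yours is arguably the more elementary.
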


\begin{cor}\label{espc}
If $S$ is a compact Riemannian surface, then the spectrum of $S$ is discrete;
that is, $\lambda_{\ess}(S)=\infty$.
\end{cor}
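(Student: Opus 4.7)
The plan is to derive this immediately from the characterization of the bottom of the essential spectrum given in \cref{espb}, using that a compact Riemannian surface is its own compact exhausting subset. Concretely, I would invoke \cref{espb} to write
\begin{equation*}
  \lambda_{\ess}(S) = \lim_K \lambda_0(S\setminus K),
\end{equation*}
where $K$ ranges over compact subsets of $S$, ordered by inclusion. Since $S$ itself is compact, $K=S$ is admissible, so the limit is attained and equals $\lambda_0(\emptyset)$.

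The only point that needs a word of justification is the convention that $\lambda_0(\emptyset)=\infty$. This follows directly from the definition of the bottom of the spectrum as an infimum of Rayleigh quotients over an empty family of non-vanishing test functions: the infimum of the empty set in $\R\cup\{\infty\}$ is $+\infty$. Equivalently, by the monotonicity of $\lambda_0(S\setminus K)$ in $K$ noted right after \eqref{boess}, the family $\{\lambda_0(S\setminus K)\}$ is monotone increasing, and once $K$ is large enough to equal $S$ the corresponding Dirichlet problem has no admissible test functions at all, forcing the limit to be $\infty$.

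As an alternative, one could argue directly from the Weyl sequence criterion of \cref{esp}: any putative Weyl sequence $\vf_n$ for some $\lambda\in\R$ would eventually have support disjoint from the compact set $K:=S$, hence $\vf_n\equiv 0$ for all large $n$, contradicting $\limsup\|\vf_n\|_2>0$. Either way there is no real obstacle; the statement is essentially a tautology once \cref{espb} (and the definitional convention $\lambda_0(\emptyset)=\infty$) is in hand, which is why I would present it as a one-line corollary of \cref{espb} with a brief parenthetical remark on the empty-set convention.
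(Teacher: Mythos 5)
Your argument is correct and matches the paper's intent: the paper gives no explicit proof of \cref{espc}, treating it as an immediate consequence of \eqref{boess}/\cref{espb} once one takes $K=S$ and observes that the Dirichlet problem on the empty set has no admissible test functions (indeed, this observation already appears verbatim right after \eqref{boess} in the introduction). Your alternative Weyl-sequence argument via \cref{esp} is equally valid and equally direct.
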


\subsection{Surfaces with cyclic fundamental group} \label{susec}
In the (unnumbered) lemma on page 551 of \cite{Os},
Osserman establishes the following result in the special case of domains in the Euclidean plane.

\begin{lem}\label{os3}
Let $S$ be a complete Riemannian surface with boundary (possibly empty)
and $p$ be a point in the interior of $S$.
For sufficiently small $\ve>0$, let $S_\ve(p)=S\setminus B_\ve(p)$.
Then
\begin{equation*}
  \lambda_0(S_\ve(p))\to\lambda_0(S) \quad\text{as}\quad \ve\to0.
\end{equation*}
\end{lem}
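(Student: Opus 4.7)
The plan is to establish the two inequalities separately, with the nontrivial content in the upper bound.

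The inequality $\lambda_0(S_\ve(p))\ge\lambda_0(S)$ is immediate from domain monotonicity: any smooth compactly supported function on $\mathring{S_\ve(p)}$ extends by zero to one on $S$ with the same Rayleigh quotient. Since $S_\ve(p)\subseteq S_{\ve'}(p)$ for $\ve'\le\ve$, the map $\ve\mapsto\lambda_0(S_\ve(p))$ is monotone nonincreasing, so $\lim_{\ve\to0}\lambda_0(S_\ve(p))$ exists and is at least $\lambda_0(S)$.

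For the reverse inequality, fix $\delta>0$ and choose a smooth test function $\vf$ on $S$ with compact support in $\mathring S$ and $R(\vf)<\lambda_0(S)+\delta$. The strategy is to multiply $\vf$ by a cutoff function $\chi_\ve$ that vanishes in a neighborhood of $p$ and equals $1$ away from $p$, and to show $R(\chi_\ve\vf)\to R(\vf)$ as $\ve\to0$. The point is that a single point has vanishing $2$-capacity on a Riemannian surface, so such cutoffs with arbitrarily small Dirichlet energy exist. Working in a normal coordinate chart centered at $p$ (so the metric is comparable to the Euclidean one on a small disc), we set
\begin{equation*}
  \chi_\ve(x) =
  \begin{cases}
  0 & \text{if } d(x,p)\le\ve, \\
  \dfrac{\log(d(x,p)/\ve)}{\log(1/\sqrt\ve)} & \text{if } \ve\le d(x,p)\le\sqrt\ve, \\
  1 & \text{if } d(x,p)\ge\sqrt\ve,
  \end{cases}
\end{equation*}
for $\ve$ so small that $B_{\sqrt\ve}(p)\subseteq\mathring S$. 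A direct computation in polar coordinates on the annulus $\ve\le d(x,p)\le\sqrt\ve$ gives $\int_S|\nabla\chi_\ve|^2\le C/\log(1/\sqrt\ve)\to0$. A small mollification produces smooth cutoffs with the same behaviour and support properties, so we may take $\chi_\ve$ smooth.

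Next, one expands
\begin{equation*}
  \int_S|\nabla(\chi_\ve\vf)|^2
  = \int_S\chi_\ve^2|\nabla\vf|^2
  + 2\int_S\chi_\ve\vf\,\la\nabla\chi_\ve,\nabla\vf\ra
  + \int_S\vf^2|\nabla\chi_\ve|^2.
\end{equation*}
Since $\vf$ is bounded on its compact support, the last term is at most $\|\vf\|_\infty^2\int|\nabla\chi_\ve|^2\to0$; Cauchy--Schwarz then forces the cross term to $0$ as well. Dominated convergence handles $\int\chi_\ve^2|\nabla\vf|^2\to\int|\nabla\vf|^2$ and $\int\chi_\ve^2\vf^2\to\int\vf^2$. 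Hence $R(\chi_\ve\vf)\to R(\vf)<\lambda_0(S)+\delta$. Since $\chi_\ve\vf$ is a valid test function on $S_\ve(p)$, we obtain $\limsup_{\ve\to0}\lambda_0(S_\ve(p))\le\lambda_0(S)+\delta$, and letting $\delta\to0$ completes the proof.

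The main technical point, and the reason the statement is purely two-dimensional, is the capacity estimate $\int|\nabla\chi_\ve|^2\to0$ for the logarithmic cutoff; the analogous linear-cutoff construction would fail in dimension $\ge3$ since points have positive capacity there. Everything else is routine manipulation of Rayleigh quotients and dominated convergence.
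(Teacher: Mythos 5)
Your proof is correct and is essentially the argument the paper has in mind: the paper simply defers to Osserman's (unnumbered) lemma on page 551 of \cite{Os}, whose content in the Euclidean case is precisely the logarithmic cutoff / zero-capacity-of-a-point argument, and your adaptation via normal coordinates is the natural way to transfer it to the Riemannian setting. The only (entirely routine) point worth being slightly careful about is that, after mollifying $\chi_\ve$, you should arrange that it still vanishes on a closed neighborhood of $\bar B_\ve(p)$ so that $\chi_\ve\vf$ is genuinely compactly supported in $\mathring{S_\ve(p)}$ -- for instance by starting the logarithmic profile at radius $2\ve$ instead of $\ve$.
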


The arguments in \cite{Os} also apply to the more general situation of \cref{os3}
and therefore we skip its proof.

\begin{prop}\label{equal1}
If $S$ is a complete Riemannian surface diffeomorphic to sphere or projective plane,
then $\Lambda(S)=0$.
\end{prop}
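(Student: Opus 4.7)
The proof reduces to two inequalities: the trivial lower bound $\Lambda(S)\ge 0$ (every Rayleigh quotient of a nonzero smooth function is nonnegative, hence $\lambda_0(F)\ge 0$ for every admissible $F$), and the nontrivial upper bound $\Lambda(S)\le 0$.

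For the upper bound, the plan is to exhibit a family of admissible subsurfaces whose first Dirichlet eigenvalues tend to $0$. Fix any point $p$ in the interior of $S$. For sufficiently small $\ve>0$ (smaller than the injectivity radius at $p$, so that the $\ve$-sphere around $p$ is a smooth embedded circle), set $F_\ve=S\setminus B_\ve(p)$. Then $F_\ve$ is a compact subsurface of $\mathring S=S$ with smooth boundary. Topologically, removing a small open disc from a sphere produces a closed disc, and removing a small open disc from a projective plane produces a cross cap; so in either case $F_\ve$ is admissible in the infimum defining $\Lambda(S)$.

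The key input is \cref{os3}: $\lambda_0(F_\ve)\to\lambda_0(S)$ as $\ve\to 0$. Since $S$ is closed, $\lambda_0(S)=0$ (the constant function is an eigenfunction). Hence $\Lambda(S)\le\lim_{\ve\to 0}\lambda_0(F_\ve)=0$, and together with the lower bound this gives $\Lambda(S)=0$. There is no substantive obstacle: the statement is a direct consequence of Osserman's lemma together with the topological observation that the complement of a small open disc in a sphere (respectively projective plane) is a disc (respectively cross cap), both of which lie within the admissible class in the definition of $\Lambda(S)$.
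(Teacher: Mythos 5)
Your proof is correct and follows essentially the same route as the paper: both argue $0\le\Lambda(S)\le\lambda_0(S\setminus B_\ve(p))\to\lambda_0(S)=0$ using \cref{os3} and the observation that the complement of a small disc is a closed disc or cross cap. The only difference is that you spell out the injectivity-radius caveat for smoothness of the boundary, which the paper leaves implicit.
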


\begin{proof}
For $p\in S$ and $\ve>0$, $S_\ve(p)=S\setminus B_\ve(p)$
is a closed disc or cross cap, respectively, and hence
\begin{equation*}
  0 \le \Lambda(S) \le \inf_{p,\ve}\lambda_0(S_\ve(p)) = \lambda_0(S) = 0,
\end{equation*}
where we use \cref{os3} for the penultimate equality.
\end{proof}

\begin{prop}\label{equal3}
If S is a complete Riemannian surface diffeomorphic to disc (open or closed),
annulus (open, half-open, or closed), or cross cap (open or closed),
then $\lambda_0(S)=\Lambda(S)$.
\end{prop}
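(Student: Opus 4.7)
The plan is to establish the two inequalities $\Lambda(S) \ge \lambda_0(S)$ and $\Lambda(S) \le \lambda_0(S)$ separately. The first is domain monotonicity: for any compact subsurface $F \subseteq \mathring S$ with smooth boundary, any smooth $\vf$ with compact support in $\mathring F$ extends by zero to a smooth function with compact support in $\mathring S$ having the same Rayleigh quotient, so $\lambda_0(F) \ge \lambda_0(S)$. Taking the infimum over admissible $F$ yields $\Lambda(S) \ge \lambda_0(S)$.

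For the reverse inequality, I would produce an exhaustion $F_1 \subseteq F_2 \subseteq \cdots \subseteq \mathring S$ by compact subsurfaces with smooth boundary, each diffeomorphic to a closed disc, closed annulus, or closed cross cap matching the topological type of $S$, and with the property that any compact subset of $\mathring S$ lies in $\mathring F_n$ for $n$ large. Given $\ve > 0$, pick $\vf$ smooth with compact support in $\mathring S$ such that $R(\vf) < \lambda_0(S) + \ve$; then $\supp\vf \subseteq \mathring F_n$ for $n$ large, so $\vf$ is an admissible test function for $\lambda_0(F_n)$, giving $\Lambda(S) \le \lambda_0(F_n) \le R(\vf) < \lambda_0(S) + \ve$. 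Letting $\ve \to 0$ finishes the proof.

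The only non-formal step is the construction of the exhaustion with the prescribed diffeomorphism type. For each of the seven topological possibilities one can identify $\mathring S$ with the interior of a standard topological model and push forward an exhaustion by subsurfaces of the correct type: for $S$ a disc one exhausts $\mathring S$ by closed topological discs; for $S$ an annulus one uses preimages of compact intervals under a proper function whose level sets are essential circles; and analogously for cross caps. When $S$ is compact one may simply retract away from $\partial S$ using a collar. This is the main obstacle in principle, but it is a routine topological construction that presents no substantial difficulty.
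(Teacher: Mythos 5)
Your proposal is correct and takes essentially the same approach as the paper: you establish $\Lambda(S)\ge\lambda_0(S)$ by domain monotonicity and $\Lambda(S)\le\lambda_0(S)$ by exhausting $\mathring S$ with compact subsurfaces of the matching topological type (disc, annulus, or cross cap) and using that $\lambda_0(F_n)\to\lambda_0(S)$ along the exhaustion. The paper states exactly this argument, merely more tersely.
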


\begin{proof}
In each case, there exists an increasing sequence of closed discs,
annuli, or cross caps $F_n$, respectively,
which exhausts the interior $\mathring S$ of $S$.
Hence
\begin{equation*}
  \Lambda(S) = \lim\lambda_0(F_n) = \lambda_0(S),
\end{equation*}
by domain monotonicity and the definitions of $\Lambda(S)$ and $\lambda_0(S)$.
\end{proof}

\subsection{Examples}\label{susexa}
It follows from the constructions in \cite[Example 3.7]{BMM1}
that any non-compact and connected surface $S$ of finite type carries
complete Riemannian metrics of finite or infinite area with discrete spectrum,
that is, with $\lambda_{\ess}(S)=\infty$.
If the fundamental group of $S$ is not cyclic,
then $\Lambda(S)> \lambda_0(\tilde{S})$ for any such metric, by \cref{main3}.
In the following,
we extend some constructions from \cite{BMM1} slightly.

Let $F=\{(x,y)\mid x\ge0,\, y\in\R/L\Z\}$ be a funnel with the expanding
hyperbolic metric $dx^2+\cosh(x)^2 dy^2$.
Let $\kappa\colon\R\to\R$ be a monotonic smooth function with $\kappa(x)=-1$
for $x\le1$ and $\kappa(x)\to\kappa_\infty\in[-1,-\infty]$ as $x\to\infty$.
Let $j\colon\R\to\R$ solve $j''+\kappa j=0$
with initial condition $j(0)=1$ and $j'(0)=0$.
Then $j(x)\ge\cosh x$.
The funnel $F$ with Riemannian metric $g=dx^2+j(x)^2dy^2$
has curvature $K(x,y)=\kappa(x)\le-1$ and infinite area.
By comparison, the Rayleigh quotient with respect to $g$
of any smooth function $\vf$ with compact support in the part $\{x\ge x_0\}$
of the funnel is at least $-\kappa(x_0)/4$.

Let $S$ be a non-compact surface of finite type.
Endow $S$ with a hyperbolic metric which is expanding along its funnels as above.
Replace the hyperbolic metric on the funnels by the above Riemannian metric $g$.
Then the new Riemannian metric on $S$ is complete
with curvature $K\le-1$ and infinite area.
By \cref{esp} and by what we said above about the Rayleigh quotients,
the essential spectrum of the new Riemannian metric is contained in $[\kappa_\infty,\infty)$.
Choosing $\kappa$ such that $\kappa_\infty$ is larger than the first Dirichlet eigenvalue
$\lambda_0(D)$ of some smooth closed disc $D$ inside the surface yields the estimate
\begin{equation*}
  \lambda_{\ess}(S)>\lambda_0(D)>\lambda_0(\tilde S).
\end{equation*}
As a variation,
let $j$ solve $j''+\kappa j=0$ with initial condition $j(0)=1$ and $j(\infty)=0$.
Then $j'(0)\le-1$ and $j(x)\le\exp(-x)$.
The funnel $F$ with Riemannian metric $g=dx^2+j(x)^2dy^2$
has curvature $K(x,y)=\kappa(x)$ and finite area.
Again by comparison, the Rayleigh quotient with respect to $g$
of any smooth function $\vf$ with compact support in the part $\{x\ge x_0\}$
of the funnel is at least $-\kappa(x_0)/4$.

Let $S$ be a non-compact surface of finite type,
and choose $r>0$ such that $\coth(r)=-j'(0)$.
Now $S$ minus the parts $\{x\ge r\}$ of its funnels
carries hyperbolic metrics which are equal to $dx^2+j_0(x)^2dy^2$
along the parts $\{x<r\}$ of its funnels, where $j_0(x)=\sinh(r-x)/\sinh(r)$.
Then $j_0(x)=j(x)$ for $x<\min\{1,r\}$.
Hence any such hyperbolic metric,
restricted to $S$ minus the parts $\{x\ge\min\{1,r\}\}$ of its funnels,
when combined with $g$ along the funnels,
defines a smooth and complete Riemannian metric on $S$
which has curvature $K\le-1$ and finite area.
Choosing $\kappa$ and $D$ as in the first case,
we again have $\lambda_{\ess}(S)>\lambda_0(\tilde S)$.

\subsection{Generic metrics} \label{susgen}

In view of \cref{susec} and \cref{susexa} we are now prepared for the proof of \cref{proqua}.

\begin{proof}[Proof of \cref{proqua}]
For the first part observe that for any complete Riemannian surface $S$ of finite type, we have
\begin{equation*}
  \lambda_0(S) \le \lambda_0(\tilde S) \le \Lambda(S),
\end{equation*}
by \eqref{pomv} and \eqref{brooks}, respectively.
We conclude that
\begin{equation*}
  \lambda_0(S) = \lambda_0(\tilde S) = \Lambda(S)
\end{equation*}
for the surfaces considered in Propositions \ref{equal1} and \ref{equal3}. These surfaces are precisely the ones with cyclic fundamental group.

The second part follows immediately from \cref{susexa}, so we are only left with the
proof of the third part.

Let $S$ be a non-compact surface of finite type and $g$ be a complete Riemannian metric on $S$ with 
\begin{equation*}
\lambda_0(\tilde S,\tilde g)=\lambda_{\rm ess}(S,g).
\end{equation*}
By Theorem \ref{main1} we then have
\begin{equation*}
	\lambda_0(\tilde S,\tilde g)=\Lambda(S,g)=\lambda_{\rm ess}(S,g).
\end{equation*}
Now assume that $\lambda_{\rm ess}(S, g)>0$. For $n\ge1$,
let $F_n\subseteq S$ be a smooth closed disc, annulus or cross cap with
\begin{equation*}
	\lambda_0(F_n,g)
	< e^{1/n+1}\Lambda(S,g)
	= e^{1/n+1} \lambda_{\rm ess}(S,g).
\end{equation*}
Choose exhaustions of $S$ by compact subsets $K_n$ and $L_n$
and smooth functions $h_n$ such that, for all $n\ge1$,
\begin{equation*}
	F_n \subseteq \mathring K_n \subseteq K_n \subseteq \mathring L_n
\end{equation*}
and
\begin{equation*}
	e^{-1/n}\le h_n\le 1, \quad
	\text{$h_n=1$ on $K_n$}, \quad
	\text{$h_n=e^{-1/n}$ on $S\setminus L_n$.}
\end{equation*}
There exists a smooth function $f=f_t=f(t,x)$ on $(0,1]\times S$
with $f_{1/n} = h_n$ such that, for all $0<t\le1/n$,
\begin{equation*}
	\text{$f_t=1$ on $K_n$
		and $f_t=e^{-t}$ on $S\setminus L_n$}
\end{equation*}
and such that $f$ is monotonically decreasing in $t$.
Since $f_t=1$ on $K_n$ for $t\le1/n$ and the $K_n$ exhaust $S$,
$f$ can be smoothly extended to $[0,1]\times S$ by setting $f_0=1$.

Let $g_t=f_tg$.
Then $g_t$ is a smooth family of conformal metrics on $S$
and is a continuous curve of metrics with respect to the uniform distance.
For $t\le1/n$, we have
\begin{equation*}
	\Lambda(S,g_t) \le \lambda_0(F_n,g_t) =\lambda_0(F_n,g)
	< e^{1/n+1}\lambda_{\rm ess}(S,g).
\end{equation*}
Since the Dirichlet integral is invariant under conformal change in dimension two,
we obtain, for $1/n+1\le t\le1/n$,
\begin{equation*}
	\Lambda(S,g_t)
	< e^{1/n+1}\lambda_{\rm ess}(S,g)
	\le e^t \lambda_{\rm ess}(S,g)
	= \lambda_{\rm ess}(S,g_t).  
\end{equation*}
Invoking Theorem \ref{main1} we conclude that for all $t>0$ one has the inequality:
\begin{equation*}
	\lambda_0(\tilde S,\tilde g_t)
	< \Lambda(S,g_t)
	< \lambda_{\rm ess}(S,g_t).
\end{equation*}
It remains to show that the set of metrics $g$ on $S$ that satisfy the strict inequality
\begin{equation*}
	\Lambda(S, g) > \lambda_0(\tilde{S}, \tilde{g})
\end{equation*} 
is an open set in the uniform $C^\infty$ topology. This follows from the fact that two metrics that are close to each other in the uniform $C^\infty$ topology are  quasi-isometric by a quasi-isometry with quasi-isometry constant close to $1$. 
\end{proof}

\begin{rems} \label{rems_generic}
	1) Any two complete Riemannian metrics $g_0,g_1$ on $S$ of finite uniform distance are quasi-isometric.
	This implies, that there is a constant $C>0$ depending on the distance of $g_0$ to $g_1$ such that
	\begin{equation*}
	C^{-1}\lambda_{\rm ess}(S,g_0) \leq \lambda_{\rm ess}(S,g_1) \leq C \lambda_{\rm ess}(S,g_0). 
	\end{equation*}
	In particular, we have that
	\begin{itemize}
	\item[(i)] $\lambda_{\rm ess}(S,g_0)$ is finite iff $\lambda_{\rm ess}(S,g_1)$ is finite,
	\item[(ii)] $\lambda_{\rm ess}(S,g_0) = 0 $ iff $\lambda_{\rm ess}(S,g_1)=0.$
\end{itemize}	 
	2) The above construction can be extended to get metrics with
	\begin{equation*}
		\lambda_{\rm ess}(S,g_t)=e^t\lambda_{\rm ess}(S,g)
	\end{equation*}
	 for all $t\ge0$. \\
	3) If $S$ is non-compact, any complete hyperbolic metric on $S$ satisfies 
	$\lambda_0(\tilde S,\tilde g)=\lambda_{\rm ess}(S,g)=1/4.$\\
	4) If $S$ is non-compact, any complete Riemannian metric on $S$,
	which is in zeroth order asymptotic to a flat  cylinder $\R/L\Z \times [0,\infty),$
	has $\lambda_0(\tilde{S},g)=\lambda_{\rm ess}(S,g)=0.$
\end{rems}

\subsection{Problems and questions} \label{suseq}
We collect some questions naturally arising in view of our results.
Let $S$ be a compact and connected Riemannian surface with negative Euler characteristic. \\
\begin{inparaenum}[1)]
\item\label{opde}({\sc Optimal design})
For a given compact subsurface $T$ of $\mathring S$
with smooth boundary $\partial T\ne\emptyset$,
we may consider the constant
\[ \Lambda_T(S) = \inf_F\lambda_0(F),\]
where $F$ runs over all subsurfaces of $S$ isotopic to $T$.
The analytic systole is an infimum over such constants.
It is interesting to ask for estimates of $\Lambda_T(S)$.
The infimum is probably achieved by degenerate $F$,
where $\partial F$ is mapped onto a graph $\Gamma$ in $S$
such that $S\setminus\Gamma$ is diffeomorphic to the interior of $T$.
In fact, for any $F$ isotopic to $T$, there is a graph $\Gamma$ in $S$
such that $F\subseteq S\setminus\Gamma$ and such that $S\setminus\Gamma$
is isotopic to the interior of $F$.
Hence, by domain monotonicity,
$\Lambda_T(S)$ is the infimum over all $\lambda_0(S\setminus\Gamma)$,
where $\Gamma$ runs through such graphs.
What are the optimal graphs?
This circle of problems is related to the work of Helffer, Hoffman-Ostenhof, and Terracini \cite{HHT}.

\item\label{rigi}({\sc Rigidity})
The inequality $\lambda_{-\chi(S)}(S)>\Lambda(S)$, mentioned in the introduction,
raises the question whether there is another natural
geometric constant $\Lambda'(S)>\Lambda(S)$,
where we only have the weak inequality $\lambda_{-\chi(S)}\ge\Lambda'(S)$
and where equality occurs only for a distinguished class of Riemannian metrics.

\item\label{rigi1}({\sc Another rigidity})
The last part of \cref{proqua} suggests that hyperbolic metrics on non-compact surfaces of finite 
type are among a small collection of metrics that satisfy 
\begin{equation*}
\Lambda(S) = \lambda_0(\tilde{S}) = \lambda_{ess}(S).	
\end{equation*}
It would be interesting to see what other implications this equality has on the metric. 
If we rescale the metric by a function $f \colon S \to (0,1]$ which is $1$ outside a compact set, then
$\lambda_0(\tilde S)$ can only increase, while $\lambda_{\rm ess}(S)$ remains unaffected. Using our
main theorem we can see that the new metric also satisfy the above equality. Hence one can not have a 
rigidity among all smooth metrics. Also, observe that points 1)\ and 4)\ of \cref{rems_generic} imply 
that there is no such rigidity for metrics with $\lambda_{\rm ess}(S)=0.$

\item\label{higher_dim}({\sc Higher dimensions})
All our definitions extend in a natural way to higher dimensional manifolds.
For instance, we may define the analytic systole of an $n$-dimensional manifold $M$
by $\Lambda(M)= \inf_\Omega\lambda_0(\Omega)$,
where $\Omega$ runs over all tubular neighborhoods
about essential simple loops in $M$.
By \eqref{pomv}, we have $\Lambda(M)\ge\lambda_0(\tilde M)$.
One may ask whether the strict inequality holds true under reasonable assumptions on $M$.
Our methods seem to be too weak to adress this question.
\end{inparaenum}

\appendix
\section{On $\lambda_0$ under coverings} \label{appendix}

In \cite{Br2}, Brooks states that,
for a Riemannian covering $\pi\colon\hat M\to M$ of complete Riemannian manifolds without boundary,
the bottom of the spectrum remains unchanged provided the covering is normal with amenable covering group and that $M$ has finite topological type, that is,
that $M$ is the union of finitely many simplices.
We use the corresponding result in the case where the covering is normal with cyclic fundamental group,
but where the boundaries of $\hat M$ and $M$ may not be empty.
In fact, in \cite{BMM1} we also claim that the results there remain true for Schr\"odinger operators $\Delta+V$ with non-negative potential $V$.

By the proof of \cite[Theorem (2.1)]{Su} or \cite[Theorem 7]{CY},
the bottom $\lambda_0(M,V)$ of the spectrum of a Schr\"odinger operator $\Delta+V$ on a complete and connected  Riemannian manifold $M$ with boundary (possibly empty) with non-negative potential $V$.
is the top of the positive spectrum of $\Delta+V$.
Now for a Riemannian covering $\pi\colon\hat M\to M$ of complete and connected Riemannian manifolds with boundary (possibly empty) and non-negative potentials $V$ and $\hat V=V\circ\pi$,
the lift of a positive $\lambda$-eigenfunction of $\Delta+V$ on $M$ to $\hat M$ is a positive $\lambda$-eigenfunction of $\Delta+\hat V$.
Therefore
\begin{equation}\label{pomv}
  \lambda_0(M,V) \le \lambda_0(\hat M,\hat V)
\end{equation}
in this situation.
Since the lift of a square integrable function on $M$ to $\hat M$ is square integrable if the covering is finite,
the reverse inequality holds for such coverings,
but does not hold in general.

\begin{thm}\label{brooks2}
Let $\pi\colon\hat M\to M$ be a normal Riemannian covering of complete and connected Riemannian manifolds with boundary (possibly empty)
with infinite cyclic covering group.
Let $V\colon M\to\R$ be a smooth non-negative function and set $\hat V=V\circ\pi$.
Then $\lambda_0(M,V)=\lambda_0(\hat M,\hat V)$.
\end{thm}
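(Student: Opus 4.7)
The plan is to exploit amenability of $\Gamma\cong\Z$ via a F\o lner-type cutoff on $\hat M$, in the spirit of Brooks \cite{Br2}. Inequality \eqref{pomv} already gives $\lambda_0(M,V)\le\lambda_0(\hat M,\hat V)$, so it suffices to prove the reverse inequality. By the variational definition of $\lambda_0$, I would show that for every $\vf\in C_c^\infty(\mathring M)$ there is a sequence $\hat\vf_n\in C_c^\infty(\mathring{\hat M})$ with
\[
\limsup_{n\to\infty}\frac{\int_{\hat M}|\nabla\hat\vf_n|^2+\hat V\hat\vf_n^2}{\int_{\hat M}\hat\vf_n^2}
\;\le\;
\frac{\int_M|\nabla\vf|^2+V\vf^2}{\int_M\vf^2}.
\]
Taking the infimum over $\vf$ on the right then yields the required inequality.

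To build the cutoff I would first produce a \emph{height function} on $\hat M$. The covering $\pi$ corresponds to a surjective homomorphism $\pi_1(M)\twoheadrightarrow\Gamma\cong\Z$, and the associated cohomology class in $H^1(M;\R)$ is realized by a smooth closed $1$-form $\omega$ on $M$. Since its pullback kills the image of $\pi_1(\hat M)$, $\pi^*\omega$ is exact, say $\pi^*\omega=d\hat\rho$ for a smooth $\hat\rho\colon\hat M\to\R$ satisfying $\hat\rho\circ T=\hat\rho+1$, where $T$ generates the deck group. Pick smooth $\psi_n\colon\R\to[0,1]$ with $\psi_n\equiv1$ on $[-n,n]$, $\psi_n\equiv0$ off $[-n-1,n+1]$, and $|\psi_n'|\le2$, and set $\chi_n=\psi_n\circ\hat\rho$. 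Then $\chi_n$ is smooth, $|\nabla\chi_n|^2\le 4(|\omega|^2\circ\pi)$, and the slab $\{|\hat\rho|\le n+1\}$ is a union of $2n+2$ $T$-translates of the fundamental domain $D=\hat\rho^{-1}([0,1))$.

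Given $\vf\in C_c^\infty(\mathring M)$, set $\hat\vf_n=(\vf\circ\pi)\chi_n$. Since $\pi|_{\bar D}$ is proper and $\supp\vf$ is compact, $\bar D\cap\pi^{-1}(\supp\vf)$ is compact, so $\hat\vf_n\in C_c^\infty(\mathring{\hat M})$. Using that $T$ is an isometry and that $\pi$ restricts to a local isometry on each translate $T^k D$, the fundamental-domain decomposition gives
\[
\int_{\hat M}\chi_n^2(|\nabla(\vf\circ\pi)|^2+\hat V(\vf\circ\pi)^2)\le(2n+2)\int_M(|\nabla\vf|^2+V\vf^2),
\]
\[
\int_{\hat M}(\vf\circ\pi)^2|\nabla\chi_n|^2\le 8\,\sup_{\supp\vf}|\omega|^2\int_M\vf^2,
\]
\[
\int_{\hat M}\hat\vf_n^2\ge 2n\int_M\vf^2.
\]
Expanding $|\nabla\hat\vf_n|^2=|\chi_n\nabla(\vf\circ\pi)+(\vf\circ\pi)\nabla\chi_n|^2$ and applying Young's inequality with weight $\delta>0$ to the cross term, and using $V\ge0$ to absorb the potential term into the $(1+\delta)$-factor, I would deduce
\[
R_{\hat M,\hat V}(\hat\vf_n)\le(1+\delta)\,\frac{2n+2}{2n}\,R_{M,V}(\vf)+\frac{C_\delta}{n},
\]
with $C_\delta$ independent of $n$. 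Letting $n\to\infty$ and then $\delta\to 0$ yields the desired bound.

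The main technical point is controlling $|\nabla\chi_n|$ on the support of $\hat\vf_n$. Because $|\nabla\hat\rho|=|\omega|\circ\pi$ is $\Gamma$-invariant, this reduces to bounding $|\omega|$ on $\supp\vf$, which is automatic from compactness. The non-negative potential $V$ plays no essential role beyond allowing the $(1+\delta)$-absorption; the argument is a direct adaptation of the amenable covering technique to $\Gamma=\Z$, where the height function $\hat\rho$ supplies a natural $\Z$-indexed F\o lner sequence of slabs. The presence of boundary causes no difficulty since $\vf$ is taken with support in $\mathring M$, so $\hat\vf_n$ remains compactly supported in $\mathring{\hat M}$.
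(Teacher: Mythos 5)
Your proof is correct and follows essentially the same strategy as the paper's: \eqref{pomv} gives the inequality $\lambda_0(M,V)\le\lambda_0(\hat M,\hat V)$, and for the reverse direction a $\Z$-equivariant height function on $\hat M$ is used to cut off the lift $\vf\circ\pi$ over wider and wider slabs, with a Young-inequality split of $|\nabla(\chi_n\hat\vf)|^2$ producing the desired Rayleigh-quotient estimate in the limit. The only cosmetic difference is that the paper obtains the height function by lifting a classifying map $f\colon M\to\R/\Z$ and assembles the cutoff from a $\Z$-equivariant partition of unity, whereas you integrate a closed $1$-form $\omega$ representing the class of $\pi_1(M)\twoheadrightarrow\Z$ to get $\hat\rho$ and compose with plateau functions $\psi_n$ directly; both yield the same $\Z$-equivariant slab decomposition and the same bounds.
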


The case of the standard Laplacian corresponds to the case $V=0$.
Note that we do not need to assume that $M$ has finite topological type in the sense of Brooks.

\begin{proof}[Proof of \cref{brooks2}]
By \eqref{pomv}, we have $\lambda_0(M,V)\le\lambda_0(\hat M,\hat V)$.
To show the reverse inequality, let $\ve>0$ and $\vf$ be a smooth function on $M$ with compact support in the interior of $M$ and Rayleigh quotient
\begin{equation*}
  R(\vf) = \int_M \{ |\nabla\vf|^2 + V\vf^2 \} \big/ \int_M\vf^2 < \lambda_0(M,V)+\ve.
\end{equation*}
The strategy is now to cut off the lift $\hat\vf=\vf\circ\pi$ of $\vf$ to $\hat M$ conveniently so that the Rayleigh quotient of the new function is bounded by $R(\vf)+\ve$.

We note first that the covering $\pi_0\colon\R\to\R/\Z$ is universal.
Hence the covering $\pi$ is the pull back of $\pi_0$ by a smooth map $f\colon M\to\R/\Z$.
Without loss of generality, we may assume that $[0]\in\R/\Z$ is a regular value of $f$.
Then $f^{-1}([0])$ is a smooth hypersurface of $M$.

Up to covering transformation, there is a unique lift $\hat f\colon\hat M\to\R$ of $f$.
Then $\pi^{-1}(f^{-1}([0]))\subseteq \hat M$ is the union of the smooth hypersurfaces $\hat f^{-1}(k)$, $k\in\Z$.
Moreover, $\hat f^{-1}([k,k+1])$ is a smooth fundamental domain for the action of $\Z$ on $\hat M$, for all $k\in\Z$, and $\supp\hat\vf\cap\hat f^{-1}([a,b])$ is compact, for all $a\le b$.

Let $\eta_0$ be a non-negative smooth function on $\hat M$ which is positive on $\hat f^{-1}([0,1])$
and which has support in $\hat f^{-1}([-1,2])$.
Set $\eta_k=\eta_0\circ\lambda_k$,
where $\lambda_k$ denotes the action of $k\in\Z$ on $\hat M$,
and $\zeta_k=\eta_k/\sum_{j\in\Z}\eta_j$.
Note that the sum in the denominator on the right is well defined since it is locally finite.
Then $(\zeta_k)$ is a partition of unity on $\hat M$ such that $\zeta_k=\zeta_0\circ\lambda_k$.
In particular, since $\zeta_0$ has support in $\hat f^{-1}([-1,2])$
and $\supp\hat\vf\cap\hat f^{-1}([-1,2])$ is compact,
there is a uniform bound $|\nabla\zeta_k|\le C$ on $\supp\hat\vf$.
Therefore
\begin{equation*}
  \chi_k = \sum_{-1\le j\le k+1} \zeta_j
\end{equation*}
is a smooth cut-off function on $\hat M$ with values in $[0,1]$ which is equal to $1$ on $f^{-1}([0,k])$,
has support in $f^{-1}([-2,k+2])$,
and gradient bounded by $3C$ on $\supp\hat\vf$.
We conclude that
\begin{equation*}
  \int_{\hat M} (\chi_k\hat\vf)^2 \ge k \int_{M} \vf^2,
  \quad
  \int_{\hat M} \hat V(\chi_k\hat\vf)^2 \le (k+4) \int_M V\vf^2,
\end{equation*}
and, using Young's inequality,
\begin{align*}
    \int_{\hat M} |\nabla(\chi_k\hat\vf)|^2
    &\le (1+\delta) \int_{\hat M} |\nabla\vf|^2 + \left(1+\frac{1}{\delta} \right)\int_{\supp \nabla \chi_k } |\nabla\chi_k|^2\hat\vf^2 \\
    &\le  (1+\delta)(k+4) \int_M |\nabla\vf|^2 + \left(1+\frac{1}{\delta} \right)36C^2 \int_M \vf^2,
\end{align*}
since $\supp \nabla \chi_k \subseteq \hat f^{-1}([-2,0]\cup[k,k+2]).$
Choosing $\delta$ small enough, we hence find that $\chi_k\hat\vf$ is a smooth function on $\hat M$ with compact support such that $R(\chi_k\hat\vf)<R(\vf)+\ve$ for all sufficiently large $k.$
\end{proof}



\end{document}